\renewenvironment{enumerate}{\begin{compactenum}}{\end{compactenum}}
\renewenvironment{itemize}{\begin{compactitem}}{\end{compactitem}}
\renewenvironment{description}{\begin{compactdesc}}{\end{compactdesc}}
\newcounter{rememberEnumi}
\newcommand{\SaveEnumi}{\global\setcounter{rememberEnumi}{\value{enumi}}}
\newcommand{\RecallEnumi}{\setcounter{enumi}{\therememberEnumi}}
\theoremstyle{plain}
\newtheorem{theo}{Theorem}[section]
\newtheorem{lemm}[theo]{Lemma}
\newtheorem{coro}[theo]{Corollary}
\newtheorem{prop}[theo]{Proposition}
\newtheorem{namet}[theo]{\myThmName}
\newcommand{\comments}{\par\medskip
  \noindent{\itshape Comments on \arabic{section}.\arabic{theo}}. }
\theoremstyle{definition} 
\newtheorem{defs}[theo]{Definitions} 
\newtheorem{nota}[theo]{Notation}
\newtheorem{exam}[theo]{Example}
\newtheorem{exas}[theo]{Examples} 
\newtheorem{prob}[theo]{Problem} 
\newtheorem{probs}[theo]{Problems}
\newtheorem{rema}[theo]{Remark} 
\newtheorem{rems}[theo]{Remarks}
\newtheorem*{ackn}{Acknowledgement}
\newtheorem{named}[theo]{\myThmName} \newtheorem*{named*}{\myThmName}
\newenvironment{ndef}[1][\kern-.235em]{\edef\myThmName{#1}\begin{named}}{\end{named}}
\newenvironment{ndef*}[1][\kern-.235em]{\edef\myThmName{#1}\begin{named*}}{\end{named*}}
\def\cprime{$'$}
\let\setminus\smallsetminus
\newcommand{\set}[2]{\left\{{#1}\left|\vphantom{#1#2\strut}\right.\, 
                    {#2}\right\}}
\newcommand{\smallset}[2]{\{{#1}\left|\vphantom{}\right.\, 
                    {#2}\}}
\newcommand{\Aut}[1]{\mathrm{Aut}(#1)}
\newcommand{\Hom}[3][]{\mathrm{Hom}_{#1}(#2,#3)}
\newcommand{\Z}[1]{\mathrm{Z}(#1)}
\newcommand{\C}[2][]{\mathrm{C}_{#1}(#2)}
\newcommand{\wt}{\mathop{\mathfrak{w}}}
\newcommand{\dual}[1]{#1^*}
\newcommand{\complete}[1]{\widehat{#1}}
\newcommand{\lie}[1]{{\mathfrak{L}}(#1)}
\newcommand{\comm}[2]{\operatorname{comm}\left(#1,#2\right)}
\newcommand{\ind}[2]{\mathrm{ind}_{#1}^{#2}}
\newcommand{\Ad}[1][]{\mathrm{Ad}_{#1}}
\newcommand{\CC}{\mathbb C}
\newcommand{\FF}{\mathbb F}
\newcommand{\NN}{\mathbb N}
\newcommand{\PP}{\mathbb P}
\newcommand{\QQ}{\mathbb Q}
\newcommand{\RR}{\mathbb R}
\newcommand{\TT}{\mathbb T}
\newcommand{\ZZ}{\mathbb Z}
\newcommand{\gL}[2][]{\mathrm{\mathfrak{gl}}_{#1}{#2}}
\newcommand{\GL}[2][]{\mathrm{GL}_{#1}{#2}}
\newcommand{\SL}[2][]{\mathrm{SL}_{#1}{#2}}
\newcommand{\sL}[2][]{\mathrm{\mathfrak{sl}}_{#1}{#2}}
\newcommand{\SU}[2][]{\mathrm{SU}_{#1}{#2}}
\newcommand{\sP}[2][]{\mathrm{\mathfrak{sp}}_{#1}{#2}}
\newcommand{\PSL}[2][]{\mathrm{PSL}_{#1}{#2}}
\newcommand{\PGL}[2][]{\mathrm{PGL}_{#1}{#2}}
\newcommand{\Orth}[2]{\mathrm{O}_{#1}{#2}}
\newcommand{\SO}[2]{\mathrm{SO}_{#1}{#2}}
\newcommand{\Spin}[1]{\mathrm{Spin}_{#1}}
\newcommand{\id}{\mathrm{id}}
\newcommand{\OlM}[1]{\mathrm{Olm}_{#1}}
\newcommand{\OR}[1]{\mathrm{OR}(#1)}
\newcommand{\KOname}{Kor}
\newcommand{\KOf}{{\text{\sc\KOname}}}
\newcommand{\KO}[1]{{\text{\sc\KOname}}(#1)}
\newcommand{\ko}[1]{{\mathfrak{\KOname}}(#1)}
\newcommand{\fP}{\mathbf{P}}
\newcommand{\aP}{\bm{\Pi}}
\newcommand{\hQ}{\mathbf{{Q}}}
\newcommand{\cQ}{\mathbf{\complete{Q}}}
\newcommand{\cS}{\mathbf{\overline{S}}}
\newcommand{\aS}{\mathbf{S}}
\newcommand{\ProLie}{\text{\sc ProLie}}
\newcommand{\LCG}{\text{\sc LCG}}
\newcommand{\LCA}{\text{\sc LCA}}
\newcommand{\CgAL}{\text{\sc CgAL}}
\newcommand{\Lie}{\text{\sc Lie}}
\newcommand{\SepLie}{\text{\sc SepLie}}
\newcommand{\TG}{\text{\sc TG}}
\newcommand{\CG}{\text{\sc CG}}
\newcommand{\CA}{\text{\sc CA}}
\newcommand{\Conn}[1]{\text{\sc Conn}{#1}}
\newcommand{\almConn}[1]{\text{\sc almConn}{#1}}
\newcommand{\Ab}[1]{\text{\sc Ab}{#1}}
\newcommand{\Small}[1]{\text{\sc small}{#1}}
\newcommand{\cMon}{\text{\sc monCA}}
\newcommand{\widebar}[1]{\mathop {\mathchoice
    {\vbox
    {\m@th \ialign {##\crcr \noalign {\kern 1\p@ }\hrulefill \crcr
        \noalign {\kern 1\p@ \nointerlineskip }%
        $\hfil \textstyle {#1}\hfil $\crcr }}}
    {\vbox
    {\m@th \ialign {##\crcr \noalign {\kern 1\p@ }\hrulefill \crcr
        \noalign {\kern 1\p@ \nointerlineskip }%
        $\hfil \textstyle {#1}\hfil $\crcr }}}
    {\vbox
    {\m@th \ialign {##\crcr \noalign {\kern 1\p@ }\hrulefill \crcr
        \noalign {\kern 1\p@ \nointerlineskip }%
        $\hfil \scriptstyle {#1}\hfil $\crcr }}}
    {\vbox
    {\m@th \ialign {##\crcr \noalign {\kern 1\p@ }\hrulefill \crcr
        \noalign {\kern 1\p@ \nointerlineskip }%
        $\hfil \scriptscriptstyle {#1}\hfil $\crcr }}}%
    }}
\let\closure\widebar
\title{Kernels of Linear Representations\\ of Lie
  Groups, Locally Compact Groups,\\ and Pro-Lie Groups }
\author{Markus Stroppel}
  \let\MYauthor\@author 
  \let\MYtitle\shorttitle
\begin{document}
\maketitle

\begin{abstract}\noindent
  For a topological group $G$ the intersection $\KO{G}$ of all kernels
  of ordinary representations is studied. We show that $\KO{G}$ is
  contained in the center of~$G$ if $G$ is a connected pro-Lie group.
  The class $\KO{\mathcal{C}}$ is determined explicitly if
  $\mathcal{C}$ is the class $\Conn\Lie$ of connected Lie groups or
  the class $\almConn\Lie$ of almost connected Lie groups: in both
  cases, it consists of all compactly generated abelian Lie groups.
  Every compact abelian group and every connected abelian pro-Lie
  group occurs as $\KO{G}$ for some connected pro-Lie group~$G$.
  However, the dimension of $\KO{G}$ is bounded by the cardinality of
  the continuum if $G$ is locally compact and connected.  Examples are
  given to show that $\KO{\mathcal{C}}$ becomes complicated if
  $\mathcal{C}$ contains groups with infinitely many connected
  components.
\end{abstract}

\section{The questions we consider and the answers that we have found}

In the present paper we study (Hausdorff) topological groups. If all
else fails, we endow a group with the discrete topology.
\enlargethispage{5mm}

For any group $G$ one tries, traditionally, to understand the group by
means of representations as groups of matrices. To this end, one
studies the continuous homomorphisms from $G$ to $\GL[n]{\CC}$ for
suitable positive integers~$n$; so-called \emph{ordinary
  representations}. 
This approach works perfectly for finite groups because any such group
has a faithful ordinary representation but we may face
difficulties for infinite groups; there do exist groups admitting no
ordinary representations apart from the trivial (constant)
one. See~\ref{ex:Burnside} below. 

The possible images of $G$ under ordinary representations are called
\emph{linear groups} over~$\CC$.  More generally, one may study
linear groups over arbitrary fields. See~\cite{MR0335656}
and~\cite{MR1039816} for overviews of results in that direction.
We just note here that for every free abelian group $A$ there exists
at least one field $F$ such that $A$ is a linear group
over~$F$. However, there do exist abelian groups that are not linear
over any field, see~\cite{MR0013164}, cf.~\cite[2.2]{MR0335656}.  Note
also that quotients of linear groups may fail to be linear,
cf.~\cite[Ch.\,6]{MR0335656}. This phenomenon will play a role
in~\ref{realHeis} below. 

In the present notes, we are mainly interested in that part of $G$
that cannot be understood by means of ordinary representations,
namely, the intersection $\KO{G}$ of all kernels of ordinary linear
representations. A detailed overview over the results of the present paper
will be given in~\ref{overview} below; we give some coarse indications
here before we introduce more specific notation. %

We will show that $\KO{G}$ is a central subgroup
of~$G$ if $G$ belongs to the class $\Conn\ProLie$ of all connected
pro-Lie groups (in particular, if $G$ is locally compact and
connected).  Moreover, we investigate the class
$\KO{\mathcal{G}}:=\smallset{\KO{G}}{G\in\mathcal{G}}$ for different
classes $\mathcal{G}$ of groups.  For the class $\Conn\Lie$ of
connected Lie groups, in particular, we show in~\ref{KOconnLie} that
$\KO{\Conn\Lie}$ is the class $\CgAL$ of compactly generated abelian
Lie groups. Thus this class is as large as possible (after the
observation that $\KO{G}$ is central for each $G\in\Conn\Lie$,
cf.~\ref{connCenter}).  The class $\KO{\Conn\ProLie}$ contains all
connected abelian pro-Lie groups and all compact abelian groups,
see~\ref{KOconnProLie}.
For the class $\Conn\LCG$ of connected locally compact groups it turns
out that there is a somewhat surprising bound on the dimension of
members of~$\KO{\Conn\LCG}$, see~\ref{KorConnLCGfinGen}. 

\begin{nota}
  For topological groups $G$, $H$ let $\Hom{G}{H}$ denote the set of
  continuous homomorphisms from $G$ to~$H$. If $G$ and $H$ are
  (topological) vector spaces over~$\FF$, we write $\Hom[\FF]{G}{H}$
  for the set of continuous $\FF$-linear homomorphisms.
  We put
  \[
  \OR{G} := \bigcup\limits_{n\in\NN}\Hom{G}{\GL[n]\CC}, \text{ \ then \ }
  \KO{G} = \bigcap\limits_{\rho\in\OR{G}}\ker\rho \,.
  \]
  Clearly $\KO{G}$ is a closed normal subgroup of~$G$, in fact, it is
  fully invariant (i.e., each endomorphism of the topological
  group~$G$ maps $\KO{G}$ into itself, see~\ref{fullyInv} below).
\end{nota}

In order to keep notation simple, we also consider
continuous homomorphisms from $G$ to the group $\GL{(V)}$ of all linear
bijections of a vector space $V$ of finite dimension~$n$ over
$\FF\in\{\RR,\CC\}$. Note that this does not mean that we consider our
problem in greater generality because $\GL{(V)}$ is isomorphic to
$\GL[n]{\FF}\le\GL[n]{\CC}$.

\begin{rems}
  Our present problem bears some similarity to questions that arise in
  character theory. E.g., for a locally compact abelian group $G$ the
  ordinary representations may be reduced to collections of
  homomorphisms from $G$ into $\GL[1]{\CC}\cong\CC^\times\cong
  \RR\times\TT$ where $\TT:=\RR/\ZZ$ as usual. The elements of
  $\dual{G}:=\Hom{G}{\TT}$ are called \emph{characters} of~$G$ while
  those of $\Hom{G}{\RR}$ are the \emph{real characters}\footnote{ %
    One should not confuse this notion of real character (of
    topological abelian groups) with the usage of the term in the
    theory of characters of finite groups where it denotes a class
    function assuming real values, cf.~\cite[p.\,56\,ff]{MR1050762}.
    Take the direct product $\prod_{p\in\PP}\ZZ(p^\infty)$ with the
    discrete topology. The direct sum
    $\bigoplus_{p\in\PP}\ZZ(p^\infty) \cong \QQ/\ZZ$ is the torsion
    subgroup, but the full product is isomorphic to
    $\QQ^{(2^{\aleph_0})}\oplus\QQ/\ZZ$. The latter is isomorphic to
    $(\RR/\ZZ)_{\mathrm{discr}}$ and to
    $(\RR\times\QQ/\ZZ)_{\mathrm{discr}}$. The projection onto the
    torsionfree summand
    $\QQ^{(2^{\aleph_0})}\cong\RR_{\mathrm{discr}}$ is a real
    character, indeed.}. %
  Since $\Hom{\RR}{\TT}$ separates points we have
  $\bigcap_{\chi\in\dual{G}}\ker\chi \subseteq
  \bigcap_{\rho\in\Hom{G}{\RR}}\ker\rho$.  Pontryagin's duality theory
  for locally compact abelian groups (cf.~\cite[Ch.\,F]{MR2226087})
  uses characters, it rests on the fact that $\Hom{G}{\TT}$ separates
  points for each locally compact abelian group. This is no longer
  true for arbitrary topological abelian groups,
  cf.~\cite[23.32]{MR551496}. The intersection over the kernels of
  real characters has been identified in~\cite{MR0214697},
  cf.~\cite{MR0089361}, \cite{MR2413959}.
\end{rems}

\begin{rema}
  Our present problem disappears if we take a local (or, rather, an
  infinitesimal) point of view. Indeed Ado's Theorem
  (\cite{MR0027753}, see~\cite{MR0030946} for an English translation,
  cf. also~\cite{MR0028829}, \cite{MR0032613}, \cite{MR0201576})
  asserts that every Lie algebra~$\mathfrak{g}$ of finite dimension
  over~$\RR$ or~$\CC$ has a faithful ordinary representation; i.e. a
  faithful homomorphism into $\gL[n]{\RR}$ for some~$n$. Thus a
  single, suitably chosen representation suffices to show that
  $\ko{\mathfrak{g}}$ is trivial, where
  \[
  \ko{\mathfrak{g}} :=
  \bigcap_{n\in\NN}\quad\bigcap_{\rho\in\Hom{\mathfrak{g}}{\gL[n]\CC}}\ker\rho \,.
  \]
  For a pro-Lie algebra~$\mathfrak{g}$ (i.e., a projective limit
  $\mathfrak{g}$ of Lie algebras of finite dimension over~$\RR$ such
  that $\mathfrak{g}$ is complete as a topological vector space,
  cf.~\cite[Ch.\,7]{MR2337107}) one also knows that
  $\ko{\mathfrak{g}}$ is trivial.

  In particular, there is no useful relationship between
  $\KO{G}$ and $\ko{\lie{G}}$ if $G$ is a group which has Lie
  algebra~$\lie{G}$ (in the sense of~\ref{addLie} below).
  However, the Lie algebra will be useful to construct ordinary
  representations of a pro-Lie group, see~\ref{connCenter}. 
\end{rema}

\begin{ndef}[Some classes of groups]
  The following will be of interest to us here: 
  \begin{description}
  \item[$\TG:$] topological Hausdorff groups,
  \item[$\ProLie:$] pro-Lie groups, i.e., complete projective limits
    of Lie groups,
  \item[$\CG:$] compact groups, 
  \item[$\LCG:$] locally compact groups,
  \item[$\LCA:$] locally compact abelian groups,
  \item[$\Lie:$] Lie groups (without separability assumptions, i.e.,
    including \emph{all} discrete groups),
  \item[$\SepLie:$] separable Lie groups (i.e., the $\sigma$-compact
    members of~$\Lie$),
  \item[$\CgAL = \LCA\cap\SepLie:$] compactly generated abelian Lie
    groups, 
  \end{description}
  and the classes $\Ab{\mathcal{G}}$, $\Conn{\mathcal{G}}$ or
  $\almConn{\mathcal{G}}$ consisting of the abelian, connected or
  almost connected members of the class~$\mathcal{G}$,
  respectively. Here a group $G$ is called \emph{almost connected} if
  the quotient $G/G_0$ modulo the connected component $G_0$ is
  compact.
  
  For sentimental historical reasons, we write $\LCA$ and $\CA$
  instead of the more systematic $\Ab{\LCG}$ and $\Ab\CG$,
  respectively.  The diagram
  in Figure~\ref{fig:inclusions} indicates the inclusions between
  these classes.
  \begin{figure}\centering    
    \begin{footnotesize}
      \xymatrix%
      {
        && \TG \\
        & \LCG\ar@{-}[ur]     &             &  \ProLie\ar@{-}[ul] \\
        &&{\CG}\ar@{-}[ul]\ar@{-}[ur]
        &&{\almConn\ProLie}\ar@{-}[ul] \\
        &\LCA\ar@{-}[uu] & \Ab\ProLie\ar@{-}[uur]
        &\Lie\ar@{-}[uu]\ar@{-}@/^{3ex}/[uull]&
        \Conn\ProLie\ar@{-}[u]         \\
        \Conn\LCA\ar@{-}[ur]\ar@{-}[urr] &
        \Ab\Lie\ar@{-}[u]\ar@{-}[ur]\ar@{-}[urr]&
        \Conn{\Ab\ProLie}\ar@{-}[u]\ar@{-}@/_{1ex}/[urr] &
        \SepLie\ar@{-}[u]&
        \Conn\LCG\ar@{-}[u]\ar@/^{5ex}/@{-}[uuulll]\\
        &&& \almConn\Lie\ar@{-}[u]\ar@/^{1ex}/@{-}[uuur] \\
        && \CgAL\ar@{-}[uul]\ar@{-}[uur] &&
        \Conn\Lie\ar@{-}[uu]\ar@{-}[ul] }
      \caption{Inclusions between classes of topological groups}
      \label{fig:inclusions}
    \end{footnotesize}
\end{figure}
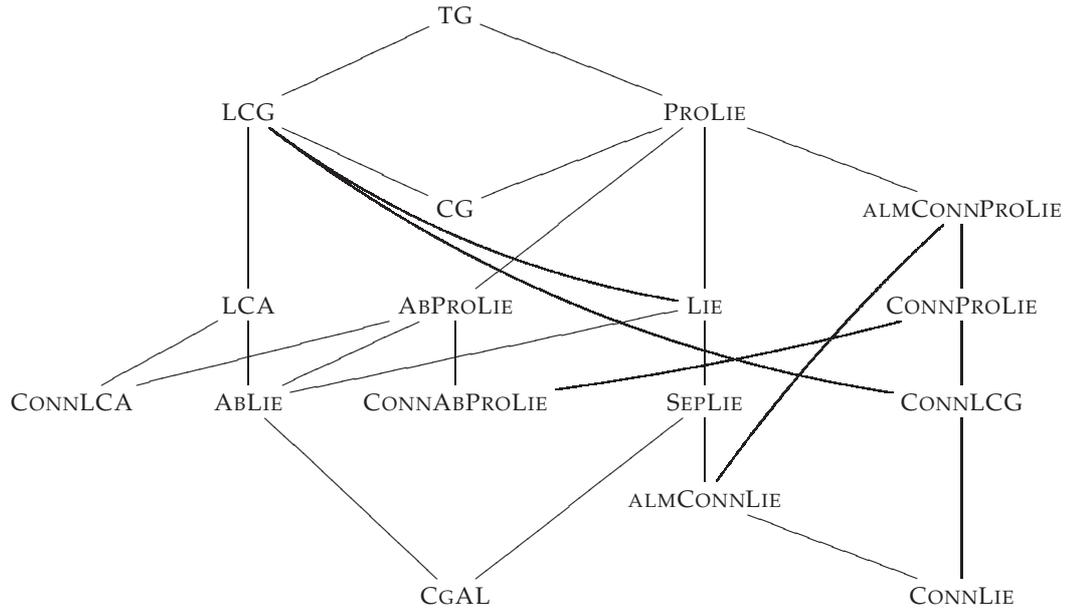
\end{ndef}

Note that we only consider Hausdorff groups; otherwise, the closure of the
trivial subgroup would occur inside $\KO{G}$ throughout.

As it is customary in the theory of locally compact groups, we do not
include separability in the definition of a Lie group. This means that
\emph{every} discrete group is a Lie group, and it secures (via the
solution of Hilbert's Fifth Problem, cf.~\cite{MR0058607},
\cite{MR0073104}) that every locally euclidean group is a Lie
group. Thus the additive group $\RR_{\mathrm{discr}}$ of real numbers
with the discrete topology belongs to ${\Lie\setminus\SepLie}$ (it is
not a member of $\CgAL$), and the identity from $\RR_{\mathrm{discr}}$
to~$\RR$ is a bijective morphism of Lie groups which is not open.  If
one wants to use the Open Mapping Theorem (which is indeed one of the
major reasons to require separability) one has to be careful and make
sure that the domain of the mapping is $\sigma$-compact.  Note that
every closed subgroup of an \emph{almost connected} Lie group belongs
to~$\SepLie$.  In our present context separability does not appear to
be of much help
(see~\ref{ex:separabilityDoesNotHelp}\ref{separabilityDoesNotHelp},
\ref{KOsepLie}) while almost connectedness is a useful condition for
Lie groups (cf.~\ref{almConnLie}) where it actually means that the
number of connected components is finite.

\begin{ndef}[Overview of results]\label{overview}
  In the present paper, we obtain the following. 
  \begin{itemize}
    \item $\KOf$ is a functor that preserves products,
      see~\ref{fullyInv}, \ref{cartProd}. 
    \item For each $G\in\Conn\ProLie$ we have
      $\KO{G}\subseteq\Z{G}\cap\closure{G'}$, see~\ref{connCenter}
      and~\ref{KOinZcapComm}.
  \item $\KO{\almConn\Lie} = \KO{\Conn\Lie} = \CgAL$,
    see~\ref{KOconnLie} and~\ref{almConnLie}.
  \item $\KO{G}$ is trivial for every compact and every abelian
    proto-Lie group $G$, see~\ref{KOCG}.
  \item For $G\in\Conn\LCG$ the connected component $(\KO{G})_0$ of
    $\KO{G}$ has a finitely generated dense subgroup. Thus the weight
    of $(\KO{G})_0$ is bounded by~$2^{\aleph_0}$,
    see~\ref{connKOsmall}.
  \item If\/ $G\in\Conn\LCG$ is solvable then $\closure{G'}$ has a
    finitely generated dense subgroup, cf.~\ref{sKOfiniteIfSolv}. This
    implies that the weight of $\KO{G}$ is bounded by~$2^{\aleph_0}$.
  \item The inclusions 
    $\Conn{\Ab\ProLie} \subset \aP(\CgAL \cup\CA) \subseteq 
    \KO{\Conn\ProLie} \subseteq \cS(\Conn{\Ab\ProLie})$
    are established in~\ref{KOconnProLie} and~\ref{prob:KOConnProLie}.
  \item $\KO{\Conn\LCG}$ contains those $A\in\LCA$ that possess a
    finitely generated dense subgroup, see~\ref{KorConnLCGfinGen}.
  \item For each $G\in\Conn\LCG$ there exist $A\in\CA$ and natural
    numbers~$e,f$ such that the connected component $A_0$ is monothetic and
    $\KO{G}\cong\ZZ^f\times A\times\RR^e$, see~\ref{KorConnLCGfinGen}.
    In particular, the dimension of members of\/~$\KO{\Conn\LCG}$ is
    bounded by~$2^{\aleph_0}$.
  \end{itemize}

\bigbreak\noindent
  The latter two of these results mean that $\KO{\Conn\LCG}$ is
  sandwiched between the class $\Small\LCA$ of groups of the form
  $\ZZ^f\times F\times\RR^e$ where~$F$ is a compact group with a
  finitely generated dense subgroup and the class of groups of the
  form $\ZZ^f\times A\times\RR^e$ where $A$ is compact and its
  connected component $A_0$ has a finitely generated dense subgroup;
  here~$f$ and~$e$ may be arbitrary natural numbers.
\enlargethispage{3mm}
  
  The classes $\KO{\SepLie}$ and $\KO{\LCG}$ are large and
  complicated, see \ref{KOsepLie}, \ref{KOLCG}
  and~\ref{KOLCGnotClosedUnderQ}. Some open problems are stated in
  Section~\ref{sec:openQuestions}.  Relevant results about (non-)
  linear(ity of) groups are collected in Section~\ref{sec:appendix}.
\end{ndef}

\goodbreak
\section{Basic results}

In this section we discuss functoriality of $\KOf$ and the behavior
of this functor with respect to homomorphisms (in particular
quotients) and products.

\begin{lemm}\label{fullyInv}
  For $\varphi\in\Hom{G}{H}$ we have $\varphi(\KO{G}) \le \KO{H}$.

  In particular, mapping $G$ to $\KO{G}$ and $\varphi$ to
  $\KO{\varphi} := \varphi|_{\KO{G}}^{\KO{H}}$
  implements a functor from the category of topological groups to
  itself.
\end{lemm}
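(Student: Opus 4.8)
The plan is to show that any ordinary representation of $H$ pulls back, via $\varphi$, to an ordinary representation of $G$, whose kernel therefore contains $\KO{G}$ by definition.

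\medskip
\noindent\emph{Proof proposal.} First I would take an arbitrary element $x\in\KO{G}$ and an arbitrary ordinary representation $\rho\in\OR{H}$, say $\rho\in\Hom{H}{\GL[n]\CC}$ for some $n\in\NN$. The key observation is that the composite $\rho\circ\varphi$ is a continuous homomorphism from $G$ to $\GL[n]\CC$, since both $\varphi$ and $\rho$ are continuous homomorphisms; hence $\rho\circ\varphi\in\OR{G}$. By the very definition of $\KO{G}$ as the intersection $\bigcap_{\sigma\in\OR{G}}\ker\sigma$, the element $x$ lies in $\ker(\rho\circ\varphi)$, which means $\rho(\varphi(x))=\id$, i.e.\ $\varphi(x)\in\ker\rho$. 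Since $\rho\in\OR{H}$ was arbitrary, $\varphi(x)\in\bigcap_{\rho\in\OR{H}}\ker\rho=\KO{H}$. As $x$ was arbitrary, this establishes $\varphi(\KO{G})\le\KO{H}$, which is the first assertion.

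\medskip
For the functoriality statement, I would then check the two functor axioms, which are now essentially formal. The containment just proved guarantees that the restriction-corestriction $\varphi|_{\KO{G}}^{\KO{H}}$ is well defined as a continuous homomorphism $\KO{G}\to\KO{H}$; this is the content of setting $\KO{\varphi}:=\varphi|_{\KO{G}}^{\KO{H}}$. One verifies that $\KO{\id_G}=\id_{\KO{G}}$ directly, and that $\KO{\psi\circ\varphi}=\KO{\psi}\circ\KO{\varphi}$ for composable $\varphi\in\Hom{G}{H}$ and $\psi\in\Hom{H}{K}$, since restriction of maps respects composition. Thus $\KOf$ is a well-defined endofunctor of the category of topological groups.

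\medskip
I expect no serious obstacle here: the entire argument rests on the elementary fact that the pullback of an ordinary representation along a continuous homomorphism is again an ordinary representation, together with the defining intersection. If anything merits a moment's care, it is the bookkeeping for the functoriality clause — namely confirming that the corestriction to $\KO{H}$ is legitimate (which is exactly guaranteed by the first part) so that the composite $\KO{\psi}\circ\KO{\varphi}$ makes sense and agrees with $\KO{\psi\circ\varphi}$.
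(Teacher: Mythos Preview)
Your argument is correct and follows exactly the same route as the paper: pull back an arbitrary $\rho\in\OR{H}$ along $\varphi$ to obtain $\rho\circ\varphi\in\OR{G}$, whence $\rho(\varphi(x))=\id$ for every $x\in\KO{G}$. The paper states this in a single sentence and leaves the functoriality clause implicit, whereas you have spelled out the verification of the functor axioms, but there is no difference in substance.
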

\begin{proof}
  For each $\rho\in\OR{H}$ we have $\rho\circ\varphi\in\OR{G}$, and
  $\rho(\varphi(x))=\id$ follows for each $x\in\KO{G}$.
\end{proof}

\begin{prop}\label{KOquotients}
  Let\/ $G$ be a group, and let\/ $N$ be a subgroup of~$G$.
  \begin{enumerate}
  \item In any case the group $\KO{N}$ is contained in~$\KO{G}$.
  \item If\/ $N$ is a normal subgroup of\/ $G$ then $\KO{G}N/N$ is
    contained in $\KO{G/N}$.
  \item\label{wellBehavedQuotient}
    If\/ $N$ is normal and contained in~$\KO{G}$ then $\KO{G}/N =
    \KO{G/N}$.
  \item\label{radical}
    The subgroup $\KO{G}$ is a radical in the sense that $\KO{G/\KO{G}}$
    is trivial.
  \end{enumerate}
\end{prop}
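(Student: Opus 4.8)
The plan is to treat the four parts in the order stated, drawing on restriction of representations for part~(a), the functoriality of~\ref{fullyInv} for part~(b), a factorization argument for part~(c), and finally part~(c) applied to $N=\KO{G}$ for part~(d).

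For part~(a) I would observe that every ordinary representation of $G$ restricts to one of~$N$. Given $\rho\in\OR{G}$, the restriction $\rho|_N$ is a continuous homomorphism from $N$ (with the subspace topology) into $\GL[n]\CC$, hence belongs to $\OR{N}$; therefore $\KO{N}\subseteq\ker(\rho|_N)=N\cap\ker\rho\subseteq\ker\rho$. Intersecting over all $\rho\in\OR{G}$ gives $\KO{N}\subseteq\KO{G}$. Part~(b) is then immediate: applying \ref{fullyInv} to the quotient morphism $\pi\colon G\to G/N$ yields $\pi(\KO{G})\le\KO{G/N}$, and $\pi(\KO{G})=\KO{G}N/N$.

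For part~(c), one inclusion is the special case of~(b) in which $N\subseteq\KO{G}$, so that $\KO{G}N=\KO{G}$ and hence $\KO{G}/N\subseteq\KO{G/N}$. The reverse inclusion is the step I expect to demand the most care, since it requires passing representations up and down the quotient. I would show that every $\rho\in\OR{G}$ factors through $G/N$: because $N\subseteq\KO{G}\subseteq\ker\rho$, the universal property of the quotient topology produces a continuous $\bar\rho\in\OR{G/N}$ with $\rho=\bar\rho\circ\pi$. Then for any $xN\in\KO{G/N}$ we get $\rho(x)=\bar\rho(xN)=\id$, so $x\in\ker\rho$; as $\rho$ was arbitrary, $x\in\KO{G}$, and since $N\subseteq\KO{G}$ this means precisely $xN\in\KO{G}/N$. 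Combining the two inclusions gives $\KO{G}/N=\KO{G/N}$.

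Finally, part~(d) follows by taking $N=\KO{G}$ in part~(c): this subgroup is normal (indeed closed and normal, as noted after its definition) and trivially contained in itself, so $\KO{G/\KO{G}}=\KO{G}/\KO{G}$ is trivial. The only genuine subtlety throughout is the factorization in part~(c); the remaining arguments are formal consequences of the definitions together with~\ref{fullyInv}.
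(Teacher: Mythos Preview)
Your proof is correct and follows essentially the same approach as the paper's. The only cosmetic differences are that for~(a) you spell out the restriction argument rather than invoking~\ref{fullyInv} with the inclusion $\iota\colon N\hookrightarrow G$ (which amounts to the same thing, since $\rho\circ\iota=\rho|_N$), and in~(c) you phrase the reverse inclusion directly while the paper argues by contrapositive; the underlying factorization through $G/N$ is identical.
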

\begin{proof}
  The first two assertions follow from~\ref{fullyInv} using the
  inclusion map $\iota\colon N\to G$ and the canonical quotient map
  $q_N\in\Hom{G}{G/N}$.

  Now assume that $N$ is normal in $G$ and contained in~$\KO{G}$.  For
  $x\in G\setminus\KO{G}$ there exists $\rho\in\OR{G}$ with
  $\rho(x)\ne\id$, and $\rho$ factors as $\rho=\lambda\circ q_N$
  because $N\le\KO{G}\le\ker\rho$. Thus there exists
  $\lambda\in\OR{G/N}$ with $\lambda(xN)\ne\id$. This means $\KO{G}/N
  \ge \KO{G/N}$; the inclusion $\KO{G}/N \le \KO{G/N}$ is clear
  already. Thus~\ref{wellBehavedQuotient} is established, and the last
  assertion follows.
\end{proof}

From~\ref{KOquotients}\ref{radical} we see that $\KO{G}$ is a sort of
``radical'' of the group~$G$. 
Note that $\KO{G/N}$ may be much larger than $\KO{G}N/N$ if $N$ is not
contained in $\KO{G}$; the example in~\ref{realHeis} is instructive
here, again.

\goodbreak %
The category $\TG$ and its full subcategories $\ProLie$,
$\Conn\ProLie$, $\Ab\ProLie$ and $\Conn{\Ab\ProLie}$ are closed under
arbitrary products, and these are as expected (i.e., cartesian
products with the product topology). A category of topological groups
may contain products (in the categorical sense) that are endowed with
a topology that is different from the product topology; e.g., this
happens in the categories $\LCG$ and~$\LCA$
(cf.~\cite[16.22]{MR2226087}).  However, products in $\Conn\LCG$ are
the same as those in $\TG$ (in particular, they exist only if all but
a finite number of the factors are compact),
see~\cite[16.23]{MR2226087}.

\begin{defs}
  For $\mathcal{G} \subseteq \TG$ let $\fP(\mathcal{G})$ denote the
  class of all cartesian products of \emph{finitely} many members
  of~$\mathcal{G}$. By $\aP(\mathcal{G})$ we mean the class of
  arbitrary cartesian products of members of~$\mathcal{G}$. In any
  case, we use the product topology.

  The class $\aS(\mathcal{G})$ consists of all subgroups of members
  of\/~$\mathcal{G}$ while $\cS(\mathcal{G})$ contains only the closed
  subgroups (which is more reasonable if one studies classes of
  complete groups as we do here). 

  By $\hQ(\mathcal{G})$ we denote the class of all Hausdorff
  quotient groups of members of~$\mathcal{G}$, i.e. the class of all
  groups $G/N$ where $G\in\mathcal{G}$ and $N$ is a \emph{closed}
  normal subgroup of~$G$.
  Finally, let $\cQ(\mathcal{G})$ be the class of all (Hausdorff)
  completions of members of~$\mathcal{G}$. 
\end{defs}

\begin{rems}
  A topological group need not have a
  completion, cf.~\cite[III\,\S\,3$\cdot$4, Thm.\,1]{MR979294}.  %
  The classes $\ProLie$ and $\Conn\ProLie$
  are not closed under~$\hQ$,
  see~\cite[4.11]{MR2337107}. However, Hausdorff quotients of pro-Lie
  groups are proto-Lie groups (see~\cite[4.1]{MR2337107}), i.e., they
  possess completions which belong to $\ProLie$, again. Thus
  $\cQ(\ProLie)=\ProLie\subsetneqq\hQ(\ProLie)$.  Locally compact
  groups are complete anyway (see~\cite[III\,\S3$\cdot$3,
  Cor.\,1]{MR979294}, cf.~\cite[1.31]{MR2337107}
  or~\cite[8.25]{MR2226087}), and $\hQ(\LCG)=\LCG=\cQ(\LCG)$.
\end{rems}

The following corollary to~\ref{KOquotients} is applicable to the
class $\Conn\LCG$ and its subclass $\Conn\Lie$, cf.~\ref{connCenter}.
The class $\Conn\ProLie$ is problematic because it is not closed
under~$\hQ$.
Some restriction of the sort ``$\KO{G}\le\Z{G}$'' is necessary,
cf.~\ref{KOLCGnotClosedUnderQ}. 

\begin{coro}\label{quotients}
  Consider\/ $\mathcal{G}\subseteq\TG$ with
  $\hQ(\mathcal{G})=\mathcal{G}$ and assume that\/ $\KO{G}\le\Z{G}$
  holds for each $G\in\mathcal{G}$. Then
  $\KO{\mathcal{G}}=\hQ(\KO{\mathcal{G}})$.
\qed
\end{coro}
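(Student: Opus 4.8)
The plan is to prove the two inclusions that together give the asserted equality. The inclusion $\KO{\mathcal{G}}\subseteq\hQ(\KO{\mathcal{G}})$ is immediate, since every group is the Hausdorff quotient of itself by the trivial subgroup. The substance lies in establishing $\hQ(\KO{\mathcal{G}})\subseteq\KO{\mathcal{G}}$: given $K\in\KO{\mathcal{G}}$ and a closed normal subgroup $M$ of $K$, I must exhibit a member of $\mathcal{G}$ whose $\KOf$-radical equals $K/M$.

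First I would fix $G\in\mathcal{G}$ with $K=\KO{G}$ and note that the centrality hypothesis does the decisive work. Since $K=\KO{G}\le\Z{G}$, every subgroup of $K$ is normal in~$G$; in particular $M$ is a normal subgroup of~$G$. Because $\KO{G}$ is closed in~$G$ and $M$ is closed in~$K$, the subgroup $M$ is closed in~$G$ as well. Hence $G/M$ is a Hausdorff quotient of~$G$, and the standing assumption $\hQ(\mathcal{G})=\mathcal{G}$ yields $G/M\in\mathcal{G}$.

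The final step is to identify $\KO{G/M}$. As $M\le\KO{G}$, Proposition~\ref{KOquotients}\ref{wellBehavedQuotient} applies and gives $\KO{G/M}=\KO{G}/M=K/M$. Therefore $K/M=\KO{G/M}$ is the $\KOf$-radical of a member of~$\mathcal{G}$, so $K/M\in\KO{\mathcal{G}}$, which is exactly what the inclusion requires.

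I expect no serious obstacle: the whole argument turns on the two hypotheses, and the only point deserving care is verifying that $M$ is genuinely a \emph{closed normal} subgroup of the ambient group~$G$, and not merely of~$K$. This is precisely where centrality of $\KO{G}$ enters, and once it is in place, Proposition~\ref{KOquotients}\ref{wellBehavedQuotient} finishes the proof.
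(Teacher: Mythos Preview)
Your argument is correct and is precisely the intended one: the paper presents this as an immediate corollary to Proposition~\ref{KOquotients}, and your write-up simply makes explicit the two observations (centrality forces $M$ to be closed and normal in~$G$, then~\ref{KOquotients}\ref{wellBehavedQuotient} identifies $\KO{G/M}$) that the \qed{} suppresses.
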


\begin{lemm}[{\cite[11.18, 13.13]{MR644485},
    cf.~\cite[4.28]{MR2337107}}]%
  \label{proLieModLCGisComplete}
  Let $N$ be a normal subgroup of a pro-Lie group~$G$. If $N$ is
  locally compact then $G/N$ is
  complete, and thus a pro-Lie group.  
\qed
\end{lemm}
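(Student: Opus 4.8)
The plan is to split the statement into a structural reduction and a genuine completeness argument, the latter being where the local compactness of $N$ does all the work.

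First I would note that the hypothesis forces $N$ to be closed. A locally compact subgroup of a Hausdorff topological group is always closed: such a subgroup is locally closed, hence open in its closure $\closure{N}$ (a dense locally compact subspace of a Hausdorff space is open in that space), and an open subgroup is closed, so $N=\closure{N}$. Consequently $G/N$ is a Hausdorff topological group and the quotient map $q\in\Hom{G}{G/N}$ is open. Since $G$ is a pro-Lie group, the fact recalled in the Remarks above (Hausdorff quotients of pro-Lie groups are proto-Lie) shows that $G/N$ is a proto-Lie group; by definition a proto-Lie group lies in $\ProLie$ precisely when it is complete. Thus the entire content of the lemma is the completeness of $G/N$, and it remains to prove that.

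For completeness I would argue by lifting Cauchy filters along $q$, exploiting that $N$ is not merely closed but locally compact (in particular complete, since locally compact groups are complete). Fix a compact identity neighbourhood $K$ in $N$. Let $\Phi$ be a Cauchy filter on $G/N$; in the quotient uniformity this means that for every identity neighbourhood $U$ of $G$ there is $A\in\Phi$ whose cosets are pairwise close, i.e. $x^{-1}y\in UN$ for all $xN,yN\in A$. The aim is to produce a Cauchy filter $\widetilde\Phi$ on $G$ with $q(\widetilde\Phi)$ finer than $\Phi$: then completeness of $G$ yields a limit $g$ of $\widetilde\Phi$, and $q(g)$ is a limit of $\Phi$, so $G/N$ is complete.

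The construction of $\widetilde\Phi$ is the crux. Choosing lifts $g_{xN}\in G$ of the cosets occurring in the sets of $\Phi$, the obstruction to Cauchyness on $G$ is exactly an ambiguity valued in $N$: two lifts of $UN$-close cosets differ, after an error in $U$, by an element of $N$. The key point is that local compactness confines this $N$-ambiguity to a translate of the compact set $K$, so that after passing to an ultrafilter refinement the $N$-direction can be pinned down in $K$ by compactness while the $G/N$-direction is controlled by the Cauchy condition on $\Phi$; combining the two gives a genuinely Cauchy filter $\widetilde\Phi$ on $G$ lying over $\Phi$. I expect this step to be the main obstacle, and it is the only place where more than closedness of $N$ is used: the statement is false for arbitrary closed normal $N$, and it is precisely the compactness of a neighbourhood in $N$ that makes the fibrewise lifting of the Cauchy condition succeed. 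This is the substance of the cited results of Roelcke--Dierolf; the pro-Lie conclusion then follows from the reduction in the first paragraph.
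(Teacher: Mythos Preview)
The paper does not give its own proof of this lemma: the statement is quoted from Roelcke--Dierolf (and cross-referenced to Hofmann--Morris) and closed with a bare \qed. There is thus no argument in the paper to compare against beyond the citation itself.

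Your sketch is a faithful outline of what those references do. The reduction step is exactly right: local compactness of $N$ forces $N$ to be closed, hence $G/N$ is Hausdorff and, by the fact already recalled in the paper, a proto-Lie group; so only completeness of $G/N$ is at stake. Your lifting strategy for completeness---choose a compact identity neighbourhood $K\subseteq N$, take a Cauchy filter $\Phi$ on $G/N$, and build a Cauchy filter on $G$ projecting onto a refinement of $\Phi$ by using compactness of $K$ to control the fibre ambiguity---is precisely the mechanism behind the Roelcke--Dierolf theorem that the quotient of a complete group by a locally compact normal subgroup is complete. You are also right that this is where the argument would fail for a merely closed $N$ (the paper itself notes that $\ProLie$ is not closed under~$\hQ$).

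The one caveat is that your description of the lifting (``after passing to an ultrafilter refinement the $N$-direction can be pinned down in $K$ by compactness'') is a gesture rather than a proof; making it precise requires a careful choice of lifts so that the $N$-component of the comparison elements genuinely lands in a fixed compact set, which is a little delicate. But you flag this honestly as the main obstacle and defer to the cited source, which is exactly what the paper does.
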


\begin{prop}\label{cartProd}
  The functor $\KOf$ preserves products in the category~$\TG$, in
  fact, we have $\KO{\prod_{j\in J}A_j} = \prod_{j\in J}\KO{A_j}$. For
  $\mathcal{G}\subseteq\TG$ this means\/
  $\fP(\KO{\mathcal{G}})=\KO{\mathcal{G}}$ if\/
  $\fP(\mathcal{G})=\mathcal{G}$ and
  $\aP(\KO{\mathcal{G}})=\KO{\mathcal{G}}$ if\/
  $\aP(\mathcal{G})=\mathcal{G}$.
\end{prop}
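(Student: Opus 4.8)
The plan is to establish the two inclusions separately, exploiting functoriality (Lemma~\ref{fullyInv}) for the easy directions and a factorization argument for the harder one. Write $A := \prod_{j\in J}A_j$ and let $\pi_k\colon A\to A_k$ and $\iota_k\colon A_k\to A$ denote the canonical projections and insertions, both of which are continuous homomorphisms. First I would prove $\KO{A}\subseteq\prod_{j}\KO{A_j}$: since each $\pi_k$ is a morphism, Lemma~\ref{fullyInv} gives $\pi_k(\KO{A})\le\KO{A_k}$, and as an element of a cartesian product lies in $\prod_j\KO{A_j}$ exactly when each of its coordinates does, this yields the inclusion immediately. For the reverse inclusion $\prod_j\KO{A_j}\subseteq\KO{A}$, the cleanest route is to fix an arbitrary $\rho\in\OR{A}$ and an arbitrary $x=(x_j)_j\in\prod_j\KO{A_j}$, and show $\rho(x)=\id$.

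The key step is to reduce evaluation of $\rho$ on $x$ to finitely many coordinates. Here I would invoke the standard fact about the product topology: a continuous homomorphism from a cartesian product into a Lie group (indeed into any group without small subgroups, and $\GL[n]\CC$ qualifies) factors through a projection onto a \emph{finite} subproduct. Concretely, there is a finite subset $F\subseteq J$ and a continuous homomorphism $\bar\rho\colon\prod_{j\in F}A_j\to\GL[n]\CC$ with $\rho=\bar\rho\circ\pi_F$, where $\pi_F$ is the projection onto the finite subproduct. Granting this, it suffices to treat the finite case, where $\prod_{j\in F}A_j=\prod_{j\in F}A_j$ is handled by the elementary observation that for a \emph{finite} product the insertions $\iota_k$ have commuting images whose product is everything: writing $x=\prod_{k\in F}\iota_k(\pi_k(x))$ and noting $\pi_k(x)\in\KO{A_k}$, we get $\bar\rho(\iota_k(\pi_k(x)))=\id$ because $\bar\rho\circ\iota_k\in\OR{A_k}$ kills $\KO{A_k}$ by definition. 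Hence $\rho(x)=\bar\rho(\pi_F(x))=\id$, and since $\rho$ was arbitrary, $x\in\KO{A}$.

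The main obstacle is justifying the finite-factorization step, as it is the only nontrivial input. The cleanest way is to use that $\GL[n]\CC$ has no small subgroups: there is an identity neighborhood $U$ containing no nontrivial subgroup, so $\rho^{-1}(U)$ contains a basic open identity neighborhood $W=\prod_{j\notin F}A_j\times\prod_{j\in F}W_j$ depending only on finitely many coordinates, whence the normal subgroup $\prod_{j\notin F}A_j$ (identified with its image under the insertions) maps into $U$ and therefore, being a subgroup, maps to $\{\id\}$; this is exactly the desired factorization through $\prod_{j\in F}A_j$. Once the two inclusions are in hand, the statements $\fP(\KO{\mathcal{G}})=\KO{\mathcal{G}}$ and $\aP(\KO{\mathcal{G}})=\KO{\mathcal{G}}$ under the respective closure hypotheses on $\mathcal{G}$ follow formally: the displayed product formula says precisely that $\KOf$ sends a (finite or arbitrary) product of members of $\mathcal{G}$ to the corresponding product of the $\KO{A_j}\in\KO{\mathcal{G}}$, and conversely every product of members of $\KO{\mathcal{G}}$ is realized as $\KO{\prod_j A_j}$ with $\prod_j A_j\in\mathcal{G}$.
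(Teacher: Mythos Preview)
Your proof is correct, but for the inclusion $\prod_{j}\KO{A_j}\subseteq\KO{A}$ you take a genuinely different route from the paper. The paper does not factor $\rho$ through a finite subproduct at all: it simply observes that for each insertion $\eta_m$ the composite $\rho\circ\eta_m$ lies in $\OR{A_m}$, so any $\rho\in\OR{A}$ annihilates each $\eta_m(\KO{A_m})$, hence annihilates the subgroup these generate, and therefore (since $\KO{A}=\bigcap_{\rho}\ker\rho$ is closed) annihilates its closure, which is exactly $\prod_j\KO{A_j}$. This is shorter and uses nothing about the target beyond continuity and the Hausdorff property. Your argument via the no-small-subgroups property of $\GL[n]\CC$ is perfectly valid and yields the stronger intermediate fact that every ordinary representation of a product factors through a finite subproduct; this is of independent interest but is more than is needed here. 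For the other inclusion $\KO{A}\subseteq\prod_j\KO{A_j}$ the two arguments agree (projections plus functoriality).
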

\begin{proof}
  Let $\prod_{j\in J}A_j$ be a cartesian product; the indexing
  set $J$ may be infinite. %
  For $m\in J$ let $\eta_m\colon A_m\to\prod_{j\in J}A_j$ be the
  natural inclusion, and let $\pi_m\colon\prod_{j\in J}A_j\to A_m$ be
  the natural projection.

  If $\rho\colon\prod_{j\in J}A_j\to\GL[n]{\CC}$ is an ordinary
  representation of the product then $\rho\circ\eta_m$ is an ordinary
  representation of~$A_m$, and we obtain that the subgroup generated
  by $\bigcup_{j\in J}\eta_j(\KO{A_j})$ is contained in
  $\KO{\prod_{j\in J}A_j}$. The
  product $\prod_{j\in J}\KO{A_j}$ is the closure of that subgroup
  and thus also contained in $\KO{\prod_{j\in J}A_j}$.

  Conversely, consider %
  $x\in\prod_{j\in J}A_j\setminus\prod_{j\in J}\KO{A_j}$. Then there
  exists $m\in J$ such that $\pi_m(x)\notin\KO{A_m}$ and we
  find an ordinary representation $\rho_m$ of $A_m$ with
  $\id\ne\rho_m(\pi_m(x)) = (\rho_m\circ\pi_m)(x)$. Since
  $\rho_m\circ\pi_m$ is an ordinary representation of %
  $\prod_{j\in J}A_j$ this shows $x\notin\prod_{j\in J}A_j \setminus
  \KO{\prod_{j\in J}A_j}$. 
\end{proof}

\enlargethispage{5mm}
\goodbreak

\begin{lemm}\label{CpCommSolv}
  Let $G$ be a solvable connected (not necessarily closed) subgroup of\/
  $\GL[n]{\CC}$. Then the following holds.
  \begin{enumerate}
  \item\label{invFlag} There exists a sequence $V_0,\dots,V_n$ of\/
    $G$-invariant subspaces such that for all $j<n$ we have $V_j\le
    V_{j+1}$ and\/ $\dim{V_j}=j$.
  \item For any sequence as in~\ref{invFlag} the commutator group $G'$
    acts trivially on each 
    $V_{j+1}/V_j$.
  \item There are no compact (in particular, no finite) subgroups in
    $G'$ except the trivial one. 
  \end{enumerate}
\end{lemm}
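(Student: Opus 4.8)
The plan is to construct the invariant flag first by a Lie--Kolchin type argument, then read off the triangular shape to obtain~(b), and finally exploit unipotency to rule out nontrivial compact subgroups in~(c).

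For part~(a), the delicate point is that $G$ is assumed connected and solvable only as an abstract topological subgroup, not as a closed one. I would therefore pass to the closure $\closure{G}$ in $\GL[n]{\CC}$. The closure of a connected set is connected, and since the commutator map is continuous we have $[\closure{G},\closure{G}]\subseteq\closure{G'}$; a straightforward induction then shows that each term of the derived series of $\closure{G}$ lies in the closure of the corresponding term for $G$, so $\closure{G}$ is solvable because $G$ is (and the ambient group is Hausdorff). Thus $\closure{G}$ is a closed, connected, solvable subgroup of $\GL[n]{\CC}$, hence a connected solvable Lie group by Cartan's closed subgroup theorem. Lie's theorem, applied to the action of the (complexified) Lie algebra of $\closure{G}$ on $\CC^n$, yields a common eigenvector $v$; the line $\CC v$ is $\closure{G}$-invariant because $\closure{G}$ is connected, hence generated by exponentials. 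Passing to $\CC^n/\CC v$ and inducting on the dimension produces the complete flag $V_0\le\cdots\le V_n$ with $\dim V_j=j$, and every $V_j$ is $\closure{G}$-invariant, hence $G$-invariant since $G\le\closure{G}$.

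For part~(b), I would fix a basis adapted to the flag, so that each $g\in G$ acts by an upper triangular matrix. The induced action on the one-dimensional quotient $V_{j+1}/V_j$ is a continuous homomorphism $\chi_j\colon G\to\CC^\times$. As $\CC^\times$ is abelian, $\chi_j$ annihilates every commutator, so $\chi_j(G')=\{\id\}$; this says precisely that $G'$ acts trivially on each $V_{j+1}/V_j$.

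For part~(c), part~(b) shows that in the adapted basis every element of $G'$ is upper triangular with all diagonal entries equal to $1$, i.e.\ unipotent. Let $C\le G'$ be compact. Averaging an arbitrary Hermitian form over the Haar measure of $C$ produces a $C$-invariant inner product, so $C$ is conjugate into the unitary group and in particular each of its elements is diagonalizable. A matrix that is simultaneously unipotent and diagonalizable equals $\id$, whence $C=\{\id\}$. I expect part~(a) to be the main obstacle, precisely because of the non-closedness of $G$: once the closure is formed and Lie's theorem is invoked, parts~(b) and~(c) are short.
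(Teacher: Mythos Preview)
Your proof is correct. Parts~(a) and~(b) match the paper's approach exactly: pass to the closure to apply Lie's theorem, then note that the action on each one-dimensional quotient factors through the abelian group~$\CC^\times$. For part~(c), however, you take a genuinely different route. The paper argues by induction on~$j$: if the compact group~$C$ acts trivially on~$V_j$, then on~$V_{j+1}$ it lies inside the group~$N_j$ of elements fixing both~$V_j$ and~$V_{j+1}/V_j$ pointwise, and $N_j$ is isomorphic to the additive group $\Hom[\CC]{V_{j+1}/V_j}{V_j}$, a finite-dimensional real vector space with no nontrivial compact subgroups. Your argument instead invokes the unitary trick: elements of~$G'$ are unipotent by~(b), and averaging a Hermitian form over the Haar measure of~$C$ makes each element of~$C$ diagonalizable; unipotent plus diagonalizable forces the identity. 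Your version is quicker and perhaps more conceptual, at the cost of invoking Haar measure; the paper's inductive argument is more elementary and avoids integration entirely.
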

\begin{proof}
  Replacing~$G$ by its closure in~$\GL[n]{\CC}$ we lose neither solvability nor
  connectedness, cf.~\cite[2.9, 7.5]{MR2226087}.  For closed connected
  solvable subgroups of~$\GL[n]\CC$ assertion~\ref{invFlag} is Lie's
  Theorem, cf.~\cite[Thm.\,2.2, Ch.\,III]{MR514561}.

  Since $G$ acts as a subgroup of the abelian group $\GL{(V_{j+1}/V_j)}
  \cong \GL[1]\CC \cong \CC^\times$ on $V_{j+1}/V_j$ the commutator
  group $G'$ acts trivially on that quotient.

  Finally, let $C$ be a compact subgroup of~$G'$. We proceed by
  induction to show that $C$ acts trivially on~$V_j$ for each $j\le
  n$. Indeed, if $C$ acts trivially on $V_j$ it acts on $V_{j+1}$ as a
  subgroup of the group $N_j$ consisting of all $a\in\GL{(V_{j+1})}$
  acting trivially both on $V_j$ and on $V_{j+1}/V_j$. Now $N_j$ is
  isomorphic to the additive group $\Hom[\CC]{V_{j+1}/V_j}{V_j}$. This
  is the additive group of a vector space of finite dimension
  over~$\RR$, and does not contain compact subgroups apart from the
  trivial one.
\end{proof}

\begin{coro}\label{CpCommSolvCor}
  Let $G$ be a solvable connected group. Then every compact subgroup
  of~$G'$ is contained in $\KO{G}$.  
\qed
\end{coro}

Connectedness is a crucial assumption in~\ref{CpCommSolvCor}, as
finite groups show.  Applications of~\ref{CpCommSolvCor} are given
in~\ref{realHeis} and~\ref{exa:HeisenbergWithCpCenter} below.
See also~\ref{maltsev}\ref{maltsev3} and~\ref{nahlus}.

\section{Lie algebras and pro-Lie groups}

\enlargethispage{5mm}
For a Lie group $L$ one model for the Lie algebra is the space
$\Hom{\RR}{L}$ of all one-parameter subgroups, cf.~\cite{MR0409722}.
This point of view works for quite general classes of topological
groups, see~\cite[Ch.\,2]{MR2337107}.

\begin{defs}\label{addLie}
  For a topological group $G$ let $\lie{G}$ denote the space
  $\Hom{\RR}{G}$ endowed with the compact-open topology (i.e., the
  topology of uniform convergence on compact sets).
  We call $\exp\colon\lie{G}\to G\colon X\mapsto X(1)$ the
  \emph{exponential map} for~$G$.
  Multiplication of
  $X\in\lie{G}$ by a scalar $r\in\RR$ is given as $r\,X(t) := X(tr)$.

  Addition and the Lie bracket are more involved, and not defined for
  arbitrary topological groups.
  We say that $G$ \emph{has a Lie algebra} if the following conditions
  are satisfied: 
  \begin{enumerate}
  \item\label{defAdd}
    For all $X,Y\in\lie{G}$ there are elements $X+Y$ and $[X,Y]$
    in~$\lie{G}$ such that 
    \[
    \begin{array}{rcl}
      (X+Y)(t) & = & 
      \lim\limits_{n\to\infty}
      \left( X(\frac{t}{n}) \,Y(\frac{t}{n}) \right)^n
      \text{ \ and}
      \\{}
      [X,Y](t^2)& = &
      \lim\limits_{n\to\infty}
      \comm{X(\frac{t}{n})}{Y(\frac{t}{n})}^{n^2} 
    \end{array}
    \]
    hold for all $t\in\RR$; here $\comm{g}{h}:=ghg^{-1}h^{-1}$ is the
    group commutator.
  \goodbreak
  \item The set $\lie{G}$ is a topological Lie algebra with respect to these
    operations.
  \goodbreak
  \end{enumerate}
  We say that $G$ has a \emph{generating Lie algebra} if it has a Lie
  algebra and the range of the exponential map generates a dense
  subgroup of the connected component~$G_0$.
\end{defs}

\begin{exas}
  In~\cite[3.5]{MR2337107} it is shown that every projective limit $G$
  of Lie groups has a Lie algebra and, moreover, this Lie algebra
  $\lie{G}$ is a pro-Lie algebra: i.e., the filter basis of closed
  ideals of finite co-dimension in $\lie{G}$ converges to~$0$ and
  $\lie{G}$ is complete as a topological vector space.  In particular,
  this Lie algebra is \emph{residually finite-dimensional}; the
  homomorphisms to finite-dimensional Lie algebras separate the
  points.

  Every almost connected pro-Lie group, and thus every almost
  connected locally compact group, has a generating Lie algebra,
  cf.~\cite[4.22]{MR2337107}.
\end{exas}

Our technical machinery culminates in the adjoint representation: 

\goodbreak
\begin{lemm}[{\cite[2.27, 2.28, 2.30, 4.22, 8.1]{MR2337107}}]\label{connProLieSuff}
  Let $G$ be a topological group.
  \begin{enumerate}
  \item For each $g\in G$ there is a unique bijection $\Ad(g)$ of $\lie{G}$
    onto itself such that $gX(t)g^{-1} = \Ad(g)(X)(t)$ holds for all
    $t\in\RR$.
  \item The action $G\times\lie{G}\to\lie{G}\colon (g,X)\mapsto
    \Ad(g)(X)$ is continuous.
  \SaveEnumi
  \end{enumerate}
  \goodbreak
  Now assume that $G$ has a Lie algebra. 
  \begin{enumerate}
  \RecallEnumi
  \item For each $g\in G$ the bijection $\Ad(g)$ is an automorphism of
    the Lie algebra~$\lie{G}$.
  \item The adjoint representation $\Ad\colon G\to\Aut{\lie{G}}$ is a
    continuous linear representation, where $\Aut{\lie{G}}$ is endowed
    with the strong operator topology (i.e., the topology of pointwise
    convergence).
  \item The kernel of $\Ad$ is the centralizer of (the closure of) the
    subgroup generated by the range of the exponential function. 
  \item If $G$ is a pro-Lie group then each ideal of the Lie algebra
    $\lie{G}$ is invariant under the adjoint action of the connected
    component of\/~$G$.
  \end{enumerate}
\end{lemm}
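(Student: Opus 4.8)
The plan is to derive all six assertions from the single defining formula $\Ad(g)(X)(t)=gX(t)g^{-1}$, doing the genuine work only in the last part. For~(a) I would first check that $t\mapsto gX(t)g^{-1}$ is again a one-parameter subgroup: it is a homomorphism because $X$ is one and conjugation $c_g\colon x\mapsto gxg^{-1}$ is an automorphism of~$G$, and it is continuous because $c_g$ is a self-homeomorphism of~$G$. Hence the formula defines $\Ad(g)\colon\lie{G}\to\lie{G}$; a one-parameter subgroup is determined by its values, so $\Ad(g)(X)$ is uniquely determined, and $\Ad(g)\Ad(h)=\Ad(gh)$ together with $\Ad(1)=\id$ exhibits $\Ad(g)$ as a bijection with inverse $\Ad(g^{-1})$. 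For~(b) I would use that $\RR$ is locally compact, so evaluation $\lie{G}\times\RR\to G$, $(X,t)\mapsto X(t)$, is jointly continuous and the exponential law for the compact--open topology applies; continuity of $(g,X)\mapsto\Ad(g)(X)$ then reduces to that of $(g,X,t)\mapsto gX(t)g^{-1}$, which factors as $(g,X,t)\mapsto(g,X(t))\mapsto gX(t)g^{-1}$ and is continuous by joint continuity of evaluation and of the group operations.

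For~(c), with $g$ fixed, I would push $c_g$ through the limits defining $+$ and $[\,\cdot\,,\cdot\,]$ in~\ref{addLie}\ref{defAdd}: since $c_g$ is a continuous automorphism it commutes with products, with $n$-th powers, with the group commutator (as $c_g\comm{a}{b}=\comm{c_g a}{c_g b}$) and with the limits, giving $\Ad(g)(X+Y)=\Ad(g)(X)+\Ad(g)(Y)$ and $\Ad(g)[X,Y]=[\Ad(g)X,\Ad(g)Y]$; compatibility with scalars is immediate from $(rX)(t)=X(tr)$, and continuity of $\Ad(g)$ is the special case of~(b) with $g$ fixed, so $\Ad(g)\in\Aut{\lie{G}}$. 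Assertion~(d) is then formal: $\Ad(gh)=\Ad(g)\Ad(h)$ is the computation already used in~(a), and continuity into the strong operator topology means continuity of $g\mapsto\Ad(g)(X)$ for each fixed~$X$, again a restriction of~(b). For~(e) I would unwind the kernel: $g\in\ker\Ad$ iff $gX(t)g^{-1}=X(t)$ for all $X$ and~$t$; since $X(t)=\exp(tX)$ runs exactly through the range of $\exp$, this says $g$ centralizes that range, and as the centralizer of a set equals that of the subgroup it generates and of the closure of that subgroup, $\ker\Ad=\C{\overline{\langle\exp\lie{G}\rangle}}$.

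The hard part is~(f), and I expect the main obstacle to be the infinitesimal identity $\Ad(\exp Y)=\exp(\mathrm{ad}\,Y)$, where $\mathrm{ad}\,Y(X)=[Y,X]$. In the pro-Lie setting I would prove it by reducing to the finite-dimensional quotient Lie algebras of the pro-Lie algebra~$\lie{G}$ (where it is the classical formula) and passing to the projective limit, using completeness of~$\lie{G}$ to guarantee convergence of the exponential series. Granting it, an ideal $\mathfrak{i}$ is a closed subspace with $[Y,\mathfrak{i}]\subseteq\mathfrak{i}$, hence invariant under every power of $\mathrm{ad}\,Y$ and, by closedness, under the convergent series $\exp(\mathrm{ad}\,Y)$; thus $\Ad(\exp Y)(\mathfrak{i})\subseteq\mathfrak{i}$ for every $Y\in\lie{G}$. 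Finally I would note that $S:=\set{g\in G}{\Ad(g)(\mathfrak{i})\subseteq\mathfrak{i}}$ is a closed subgroup of~$G$: it is a subgroup because $\Ad$ is a homomorphism, and it is closed because, by the continuity in~(d) and the closedness of~$\mathfrak{i}$, each $\set{g\in G}{\Ad(g)(X)\in\mathfrak{i}}$ is closed and $S$ is their intersection over $X\in\mathfrak{i}$. Since $S\supseteq\exp\lie{G}$ and, $G_0$ being a connected (hence almost connected) pro-Lie group, the range of $\exp$ generates a dense subgroup of~$G_0$, we conclude $G_0\subseteq S$, i.e. $\Ad(G_0)$ leaves every ideal~$\mathfrak{i}$ invariant.
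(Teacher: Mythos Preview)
The paper does not supply its own proof: as the bracket in the heading indicates, all six assertions are quoted from~\cite[2.27, 2.28, 2.30, 4.22, 8.1]{MR2337107}, so there is nothing in the paper to compare against. Your self-contained derivation is essentially correct and follows the standard route; parts~(a)--(e) are accurate unwindings of the definitions, and for~(f) the strategy via $\Ad(\exp Y)=\exp(\mathrm{ad}\,Y)$ together with the fact that $\exp\lie{G}$ generates a dense subgroup of~$G_0$ is the expected one and matches the argument in the cited source.

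Two small points on~(f). First, you tacitly assume the ideal~$\mathfrak{i}$ is closed, which you need in order to pass from invariance under each $(\mathrm{ad}\,Y)^n$ to invariance under the limit of the exponential series; this is the usual convention for ideals of a topological Lie algebra, but it should be stated. Second, the set $S=\set{g\in G}{\Ad(g)(\mathfrak{i})\subseteq\mathfrak{i}}$ is in general only a closed sub\emph{monoid}: from $\Ad(g)(\mathfrak{i})\subseteq\mathfrak{i}$ alone one does not obtain $\Ad(g^{-1})(\mathfrak{i})\subseteq\mathfrak{i}$ without first knowing equality. This does not damage your conclusion, since $(\exp Y)^{-1}=\exp(-Y)$ again lies in~$S$, so the subgroup $\langle\exp\lie{G}\rangle$ is already contained in the monoid~$S$, and closedness of~$S$ then yields $G_0\subseteq S$ as claimed.
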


\begin{theo}\label{connCenter}
  If\/ $G$ is a connected pro-Lie group then $\KO{G}$ is contained
  in the center $\Z{G}$ of\/~$G$ and is therefore abelian.
\end{theo}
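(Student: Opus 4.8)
The plan is to show that $\KO{G}$ is contained in the kernel of the adjoint representation and that this kernel is exactly the center. The adjoint representation $\Ad\colon G\to\Aut{\lie{G}}$ of Lemma~\ref{connProLieSuff} is the central tool, but it is not itself an ordinary representation because $\lie{G}$ is in general infinite-dimensional; the point will be to break it up into finite-dimensional pieces.

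First I would identify $\ker(\Ad)$ with $\Z{G}$. By Lemma~\ref{connProLieSuff}(e) the kernel of $\Ad$ is the centralizer in $G$ of the closure of the subgroup generated by the range of $\exp$. Since $G$ is a connected pro-Lie group it is almost connected and hence has a generating Lie algebra, so this subgroup is dense in $G_0 = G$; its closure is therefore all of $G$, and consequently $\ker(\Ad) = \Z{G}$.

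Next I would manufacture enough ordinary representations out of $\Ad$. Because $G$ is a pro-Lie group, $\lie{G}$ is a pro-Lie algebra: the closed ideals $\mathfrak{i}$ of finite codimension form a filter basis converging to $0$, so in particular $\bigcap_{\mathfrak{i}}\mathfrak{i} = 0$. As $G$ is connected, Lemma~\ref{connProLieSuff}(f) guarantees that each such ideal $\mathfrak{i}$ is $\Ad(G)$-invariant. Hence $\Ad$ descends, for each $\mathfrak{i}$, to a continuous homomorphism $\rho_{\mathfrak{i}}\colon G\to\GL{(\lie{G}/\mathfrak{i})}$ into the general linear group of a finite-dimensional real vector space, i.e.\ an ordinary representation; continuity passes from the strong operator topology on $\Aut{\lie{G}}$ to the usual topology on the finite-dimensional quotient.

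Finally I would combine these. By definition of $\KO{G}$ one has $\KO{G}\subseteq\ker(\rho_{\mathfrak{i}})$ for every $\mathfrak{i}$. An element $g$ lying in all these kernels satisfies $\Ad(g)(X)-X\in\mathfrak{i}$ for every $X\in\lie{G}$ and every $\mathfrak{i}$; since the ideals intersect in $0$ this forces $\Ad(g)=\id$, so $g\in\ker(\Ad)=\Z{G}$. Thus $\KO{G}\subseteq\Z{G}$, and a central subgroup is of course abelian. The main obstacle is the middle step: one must be certain that $\Ad$ genuinely descends to honest finite-dimensional representations, which rests on the $\Ad(G)$-invariance of the finite-codimensional ideals (Lemma~\ref{connProLieSuff}(f), where connectedness is exactly what is used) together with the residual finite-dimensionality of the pro-Lie algebra $\lie{G}$. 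Once these structural facts are in hand, the remaining argument is purely formal.
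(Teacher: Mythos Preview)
Your proof is correct and follows precisely the same strategy as the paper's own proof: use the adjoint representation, descend it to the finite-dimensional quotients $\lie{G}/\mathfrak{i}$ (invariance of the ideals coming from connectedness via Lemma~\ref{connProLieSuff}(f)), and observe that these ordinary representations separate points modulo $\ker\Ad=\Z{G}$. The paper compresses all of this into a single sentence, while you have carefully unpacked each ingredient; there is no substantive difference in approach.
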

\begin{proof}
  According to~\ref{connProLieSuff} the adjoint representation $\Ad$
  induces ordinary representations on the finite-dimensional quotients
  of~$\lie{G}$ that separate the points modulo $\ker\Ad$, which equals
  the center of~$G$.
\end{proof}

\begin{coro}
  For every connected locally compact group $G$ the group $\KO{G}$
  is contained in the center $\Z{G}$. 
\qed
\end{coro}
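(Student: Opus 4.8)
The plan is to obtain this corollary as an immediate consequence of Theorem~\ref{connCenter}: all that is needed is to check that a connected locally compact group falls under the hypothesis of that theorem, i.e.\ that it is a connected pro-Lie group. In other words, I would establish the inclusion $\Conn\LCG\subseteq\Conn\ProLie$ and then quote the theorem.

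First I would invoke the structure theory behind the solution of Hilbert's Fifth Problem (cf.~\cite{MR0058607}, \cite{MR0073104}): a connected locally compact group $G$ contains arbitrarily small compact normal subgroups $N$ for which the quotient $G/N$ is a Lie group. Since $G$ is connected it has no proper open subgroups, so this applies to all of $G$ (not merely to an open subgroup, as in the general Gleason--Yamabe statement), and consequently $G$ is the projective limit of the filtered system of its Lie quotients $G/N$.

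Second, I would note that locally compact groups are complete, as recalled above; hence the projective limit just described is already complete, and $G$ is therefore a pro-Lie group in the sense of the definition, namely a complete projective limit of Lie groups. Being connected, $G$ lies in $\Conn\ProLie$, and Theorem~\ref{connCenter} delivers $\KO{G}\subseteq\Z{G}$, as desired.

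The corollary is thus formally trivial once the first step is granted, and that step is the only substantial one: it is the deep Gleason--Yamabe theorem, which I would cite rather than reprove. The one point requiring a little care is the passage from ``arbitrarily small compact normal subgroups with Lie quotient'' to the assertion that $G$ genuinely \emph{equals} the projective limit (rather than merely embedding densely into it); here completeness of the locally compact group $G$ is exactly what closes the gap, so that no separate completion step is needed and the hypothesis of Theorem~\ref{connCenter} is met on the nose.
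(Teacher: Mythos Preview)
Your proposal is correct and matches the paper's approach exactly: the paper records the corollary with a bare \qed, treating it as immediate from Theorem~\ref{connCenter} together with the inclusion $\Conn\LCG\subset\Conn\ProLie$ (which the paper invokes repeatedly and attributes to the solution of Hilbert's Fifth Problem plus completeness of locally compact groups). You have simply spelled out that inclusion in more detail than the paper bothers to, and your remark about completeness closing the gap between ``dense embedding'' and ``equality'' is the right point to make.
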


Using~\ref{KOquotients} and~\ref{proLieModLCGisComplete} we infer: 
\begin{coro}\label{quotientsKOConnProLie}
  The class $\KO{\Conn\ProLie}$ is closed under quotients modulo
  locally compact groups while $\KO{\Conn\Lie}$
  and $\KO{\Conn\LCG}$ are closed under arbitrary (Hausdorff)
  quotients.  
\qed
\end{coro}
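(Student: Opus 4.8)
The plan is to treat the three assertions separately. The statements about $\KO{\Conn\Lie}$ and $\KO{\Conn\LCG}$ fall out of the preceding corollary~\ref{quotients}, whereas $\KO{\Conn\ProLie}$ requires a direct argument because $\Conn\ProLie$ is not closed under arbitrary Hausdorff quotients and~\ref{quotients} does not apply to it.

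For the Lie and the locally compact cases I would simply check the two hypotheses of~\ref{quotients}. First, both classes are closed under Hausdorff quotients: a quotient of a connected group is connected, a quotient of a Lie group by a closed normal subgroup is again a Lie group, and $\hQ(\LCG)=\LCG$ was recorded earlier, so $\hQ(\Conn\Lie)=\Conn\Lie$ and $\hQ(\Conn\LCG)=\Conn\LCG$. Second, every member of either class is a connected pro-Lie group, whence $\KO{G}\le\Z{G}$ by~\ref{connCenter}. Corollary~\ref{quotients} then yields $\hQ(\KO{\Conn\Lie})=\KO{\Conn\Lie}$ and $\hQ(\KO{\Conn\LCG})=\KO{\Conn\LCG}$, which is precisely closure under arbitrary Hausdorff quotients.

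For $\KO{\Conn\ProLie}$ I would argue as follows. Let $A=\KO{G}$ with $G\in\Conn\ProLie$, and let $M$ be a locally compact subgroup of~$A$; being locally compact, $M$ is closed in the Hausdorff group~$G$. By~\ref{connCenter} we have $A\le\Z{G}$, so $A$ is abelian and every subgroup of~$A$, in particular~$M$, is normal in~$G$. Since $M$ is then a locally compact normal subgroup of the pro-Lie group~$G$, by~\ref{proLieModLCGisComplete} the quotient $G/M$ is again a pro-Lie group, and it is connected as a continuous image of~$G$; thus $G/M\in\Conn\ProLie$. Finally $M$ is normal in~$G$ and contained in $\KO{G}=A$, so~\ref{KOquotients}\ref{wellBehavedQuotient} gives $\KO{G/M}=\KO{G}/M=A/M$. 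Hence $A/M=\KO{G/M}$ belongs to $\KO{\Conn\ProLie}$, which is exactly closure under quotients modulo locally compact groups.

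The routine points, namely the closure of the underlying classes under the relevant quotients and the fact that a locally compact subgroup of a Hausdorff group is closed, present no difficulty. The step that genuinely uses the structure theory is the passage to $G/M$ in the pro-Lie case: here local compactness of~$M$ is indispensable and~\ref{proLieModLCGisComplete} does the essential work, compensating for the failure of $\Conn\ProLie$ to be closed under general Hausdorff quotients. The centrality of $\KO{G}$ from~\ref{connCenter} is the other key ingredient, since it is what makes~$M$ automatically normal in~$G$ and thereby permits the appeal to~\ref{KOquotients}\ref{wellBehavedQuotient}.
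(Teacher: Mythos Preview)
Your argument is correct and follows exactly the route the paper intends: the Lie and locally compact cases are handled by~\ref{quotients} together with~\ref{connCenter}, while the pro-Lie case uses~\ref{connCenter} for normality, \ref{proLieModLCGisComplete} to stay in $\Conn\ProLie$, and~\ref{KOquotients}\ref{wellBehavedQuotient} to identify $\KO{G/M}$. The paper compresses all of this into the single line ``Using~\ref{KOquotients} and~\ref{proLieModLCGisComplete} we infer,'' but the content is the same.
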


\begin{rems}\label{ex:separabilityDoesNotHelp}
  \begin{enumerate}
  \item\label{separabilityDoesNotHelp} 
    Theorem~\ref{connCenter} does not extend to the case of arbitrary
    disconnected Lie groups even if we assume separability. In fact
    there are countable discrete groups $G$ with $\KO{G}=G$,
    see~\ref{ex:Burnside} below. However, everything is fine for
    almost connected Lie groups, see~\ref{almConnLie}.
  \item There are groups in $\LCG\setminus\ProLie$ such that $\Ad$ is
    a faithful representation (of infinite degree) but every ordinary
    representation is trivial on the connected component,
    cf.~\ref{powerD}.
  \end{enumerate}
\end{rems}

The following observation will be useful to obtain restrictions on the
structure and size (measured by the dimension, i.e., the rank of the
Pontryagin dual) of members of $\KO{\Conn\LCG}$;
cf.~\ref{sKOfiniteIfSolv}, \ref{connKOsmall} and~\ref{sKOfinite} below. %
The passage to the
\emph{closure} of the commutator group is essential,
cf.~\ref{SL2timesH} and~\ref{exa:monotheticConnCA}.

\begin{lemm}\label{KOinZcapComm}
  \begin{enumerate}
  \item If $G\in\ProLie$ then $\KO{G}\le\closure{G'}$.
  \item If $G\in\Conn\ProLie$ (in particular, if $G\in\Conn\LCG$) then
    $\KO{G} \le \Z{G}\cap\closure{G'}$.
  \end{enumerate}
\end{lemm}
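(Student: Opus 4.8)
The plan is to read off (b) from (a) together with Theorem~\ref{connCenter}. Indeed, a connected pro-Lie group satisfies $\KO{G}\le\Z{G}$ by~\ref{connCenter}, so once (a) supplies $\KO{G}\le\closure{G'}$ we obtain $\KO{G}\le\Z{G}\cap\closure{G'}$ by intersecting; the parenthetical case is covered because every connected locally compact group is a connected pro-Lie group. Thus the whole weight of the lemma rests on assertion~(a), which, notably, does not require connectedness.

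For (a) I would pass to the abelianization. Put $N:=\closure{G'}$, a closed normal subgroup, and let $q\colon G\to A:=G/N$ be the quotient map. The group $A$ is abelian, and being a Hausdorff quotient of a pro-Lie group it is a proto-Lie group (\cite[4.1]{MR2337107}). Since $\KOf$ is a functor, \ref{KOquotients} gives $q(\KO{G})=\KO{G}N/N\le\KO{A}$. Hence it suffices to prove that $\KO{A}$ is trivial: then $q(\KO{G})=\{e\}$, that is, $\KO{G}\le\ker q=N=\closure{G'}$, as desired.

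It remains to show that an abelian proto-Lie group $A$ has $\KO{A}=\{e\}$. Since the continuous characters $A\to\CC^\times=\GL[1]{\CC}$ are ordinary representations, $\KO{A}$ is contained in the intersection of their kernels; so it suffices to prove that these characters separate the points of $A$. This I would obtain by unwinding the proto-Lie structure: the closed normal subgroups $M$ with $A/M$ an abelian Lie group separate the points of $A$, so for $e\ne a\in A$ there is such an $M$ with $aM\ne M$. Each quotient $A/M$ is an abelian Lie group, hence locally compact abelian, and—using that $\Hom{A/M}{\TT}$ separates the points of a locally compact abelian group (Pontryagin duality)—there is a character $A/M\to\TT\hookrightarrow\CC^\times$ not killing $aM$; composing with $A\to A/M$ yields the required character of $A$. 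This is exactly the abelian half of the statement recorded as~\ref{KOCG}.

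The functorial reductions are painless formal manipulation via~\ref{fullyInv} and~\ref{KOquotients}; the genuine content, and the step I expect to be the main obstacle, is the separation of points by characters in the abelian case. Concretely this is the assertion that an abelian proto-Lie group has enough continuous homomorphisms into $\CC^\times$, which rests on Pontryagin duality for the locally compact abelian Lie quotients together with the fact that these quotients form a point-separating family. One subtlety worth flagging is that one must pass all the way to $\closure{G'}$ rather than $G'$: the target $\CC^\times$ is abelian, so every ordinary representation of $A$ is trivial on $A'$, and it is only the \emph{closed} commutator subgroup that is genuinely detected as a kernel.
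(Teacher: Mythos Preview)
Your argument is correct and follows essentially the same route as the paper: pass to the abelian proto-Lie quotient $G/\closure{G'}$, invoke that $\KOf$ of an abelian proto-Lie group is trivial (which is exactly~\ref{KOCG}), and conclude $\KO{G}\le\closure{G'}$; then combine with~\ref{connCenter} for part~(b). Your closing remark about the closure is slightly garbled---the real reason one needs $\closure{G'}$ rather than $G'$ is that the quotient must be Hausdorff for the proto-Lie machinery (and~\ref{KOquotients}) to apply, not anything about targets being abelian---but this does not affect the proof.
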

\begin{proof}
  The quotient $G/\closure{G'}$ is an abelian proto-Lie group
  (see~\cite[4.1]{MR2337107}). Thus $\OR{G/\closure{G'}}$ separates
  the points, and $\KO{G/\closure{G'}}$ is trivial. For each $x\in
  G\setminus\closure{G'}$ we thus find some
  $\rho\in\OR{G/\closure{G'}}$ with $x\,\closure{G'} \notin\ker\rho$,
  and composing $\rho$ with the quotient map we find $\rho'\in\OR{G}$
  such that $x\notin\ker{\rho'}$.

  If $G$ is also connected then $\KO{G}\le\Z{G}$ has been
  established~\ref{connCenter}.
\end{proof}

\section{Almost connected Lie groups}

\begin{exam}\label{realHeis}
  The following example has been around for quite some time,
  see~\cite[4.14, p.\,191]{MR0073104} or~\cite{MR0327979}.
  We use it to show that $\RR/\ZZ$ belongs to $\KO{\Conn\Lie}$. 
  Let $H$ be the real Heisenberg group; i.e., $H=\RR^3$ with the
  multiplication $(a,b,x)*(c,d,y)=(a+c,b+d,x+y+ad-bc)$. Clearly this
  is a connected Lie group, and the center $\{0\}^2\times\RR$
  coincides with the commutator group~$H'$.

  The cyclic subgroup $Z := \{0\}^2\times\ZZ$ is closed and central
  in~$H$. Thus $H/Z$ is a connected nilpotent (and thus solvable) Lie
  group. According to~\ref{CpCommSolvCor} the compact central subgroup
  $C:=(\{0\}^2\times\RR)/Z \cong \RR/\ZZ$ of $(H/Z)'$ is contained
  in~$\KO{H/Z}$. On the other hand, the quotient $H/C\cong\RR^2$ has
  a faithful ordinary representation.
  Therefore, we obtain $\KO{H/Z} = C \cong \RR/\ZZ$. 
\end{exam}

For every connected Lie group $L$ with semi-simple Lie algebra one
knows that $\KO{L}$ is a discrete (and thus central) normal subgroup,
cf.~\cite[5.3.6 Thm.\,8, p.\,264]{MR1064110}. In fact, for any such
group one can read off $\KO{L}$ from~\cite[Table\,10,
p.\,318\,f]{MR1064110}. The relevant information is accessible
algorithmically, see~\cite{RealLIE},~\cite{MR1270178}. 
For our present purposes, we require explicit
knowledge of the case where $L$ is the simply connected covering
of~$\PSL[2]{\RR}$. 

\begin{exam}\label{Sl2}
  Let $S$ denote the simply connected covering of $\PSL[2]\RR$. The
  center $\Z{S}$ of $S$ is infinite cyclic, but no
  proper covering of $\SL[2]\RR$ admits a faithful ordinary
  representation; see~\cite[Table\,10, p.\,318\,f]{MR1064110} (where
  the Lie algebra $\sL[2]\RR$ occurs in the guise of $\sP[2]{\RR} =
  \sP[4p+2]{\RR}$ for~$p=0$), cf. also~\cite{MR0327979} and~\cite[95.9,
  95.10]{MR1384300}. Thus we obtain $\KO{S} =
  \smallset{z^2}{z\in\Z{S}} \cong \ZZ$.

  Passing from $S$ to the quotient $S_d := S/\smallset{z^{2d}}{z\in\Z{S}}$
  we find $\KO{S_d} \cong \ZZ/d\,\ZZ$ for any nonnegative
  integer~$d$.
\end{exam}

Instead of the simple group $\PSL[2]\RR$ we could use any other simple
Lie group with infinite fundamental group
(cf.~\ref{simpleLieInfinitePi}) for the construction in~\ref{Sl2}.

\begin{exam}\label{SL2timesH}
  Again, let $S$ denote the simply connected covering of $\SL[2]\RR$,
  and let $H$ be the real Heisenberg group (cf.~\ref{realHeis}).  Pick
  a generator $\zeta$ for the center of~$S$. Then the subgroup $K:=
  \smallset{(\zeta^{-2z},(0,0,z))}{z\in\ZZ}$ is closed and
  central. Passing to the quotient $G:=({S\times H})/K$ amounts to an
  identification of $\zeta^2$ with $(0,0,1)$.

  Composing the inclusion maps of the two factors $S$ and $H$ with the
  quotient map modulo $K$ we obtain continuous homomorphisms $\eta_S$
  and $\eta_H$ from $S$ and $H$, respectively, into~$G$. Composition of
  ordinary representations of $G$ with $\eta_X$ then yields ordinary
  representations of~$X\in\{S,H\}$. 

  Any ordinary representation of $S$ has $\zeta^2$ in its kernel
  (cf.~\ref{Sl2}). Therefore, any ordinary representation $\varphi$ of
  $G$ yields a representation $\varphi\circ\eta_H$ of $H$ with
  $(0,0,1)$ in its kernel. According to~\ref{realHeis} this implies 
  $\{0\}^2\times\RR \le \ker(\varphi\circ\eta_H)$.  Therefore, the
  subgroup 
  \[
  R:=\smallset{(\zeta^{2z},(0,0,x))}{z\in\ZZ,x\in\RR} / K
  \]
  is contained in~$\KO{G}$. Since $G/R\cong\SL[2]\RR\times\RR^2$ has a
  faithful ordinary representation we obtain $\KO{G}=R\cong\RR$. 
\end{exam}

The examples collected so far suffice to determine the class
$\KO{\Conn\Lie}$. For the proof of~\ref{KOconnLie} we need the
following explicit description of the class $\CgAL$ (see
also~\cite{MR2542208}).

\begin{prop}\label{membersCgAL}
  The elements of\/ $\CgAL$ are precisely those of the form 
  \[
  \ZZ^f\times \prod_{j\in J}(\ZZ/d_j\ZZ) \times (\RR/\ZZ)^c
  \times\RR^e
  \eqno{(*)}
  \]
  where $f,e,c$ are nonnegative integers and $(d_j)_{j\in J}$ is a
  finite family of positive integers. 
\end{prop}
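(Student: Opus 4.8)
The plan is to prove the two inclusions separately, the forward one being routine and the converse carrying the content. For the inclusion ``$\supseteq$'' I would note that each of the factors $\ZZ$, $\ZZ/d_j\ZZ$, $\RR/\ZZ$ and $\RR$ is an abelian Lie group that is locally compact and compactly generated, and that these properties are preserved by \emph{finite} cartesian products; hence every group of the form~$(*)$ lies in $\CgAL$.

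For the converse, let $G\in\CgAL$, so that $G$ is a compactly generated abelian locally compact Lie group. A Lie group is locally connected, so its identity component $G_0$ is open and $G/G_0$ is discrete. The structure theorem for connected abelian Lie groups gives $G_0\cong\RR^e\times(\RR/\ZZ)^c$. Since a quotient of a compactly generated group is compactly generated and $G/G_0$ is discrete, the quotient $G/G_0$ is finitely generated; the classification of finitely generated abelian groups then yields $G/G_0\cong\ZZ^f\times\prod_{j\in J}(\ZZ/d_j\ZZ)$ with $J$ finite.

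It remains to promote this ``extension'' description to a genuine topological direct product of the shape~$(*)$, and this is the heart of the proof. Here I would invoke the structure theorem for compactly generated locally compact abelian groups, which produces a topological isomorphism $G\cong\RR^e\times\ZZ^f\times K$ with $K$ compact. The hypothesis that $G$ be a Lie group forces $K$ to be a compact abelian Lie group; its identity component is a torus $(\RR/\ZZ)^c$, and the finite group $K/K_0$ splits off because $(\RR/\ZZ)^c$ is divisible, hence injective as an abstract abelian group, while $K_0$ is open. Thus $K\cong(\RR/\ZZ)^c\times F$ with $F$ finite abelian, and writing $F\cong\prod_{j}(\ZZ/d_j\ZZ)$ and collecting the factors gives~$(*)$.

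I expect the main obstacle to be exactly this last splitting step. Citing the compactly generated locally compact abelian structure theorem makes it short; a self-contained argument instead concentrates the work in lifting the free part (using that $\ZZ^f$ is projective, one lifts a basis to $G$ to obtain a discrete complement) and in complementing the vector subgroup $\RR^e$, after which the identification $K\cong(\RR/\ZZ)^c\times F$ is routine. The remaining ingredients --- the finite-product direction and the two classical classifications --- present no difficulty.
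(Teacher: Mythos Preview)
Your argument is correct, but the paper's own proof is rather different in flavor. Instead of analyzing $G_0$ and $G/G_0$ and then invoking the structure theorem for compactly generated groups in $\LCA$ (splitting off $\RR^e\times\ZZ^f$ and then decomposing the compact factor), the paper quotes a single result to the effect that, among all locally compact abelian groups, those of the form~$(*)$ are precisely the compactly generated ones having \emph{no small subgroups}; since every Lie group has no small subgroups, the converse inclusion follows in one stroke.

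Your route has the virtue of being more self-contained and of exhibiting explicitly where each factor comes from (the vector part from the structure theorem, the torus from $K_0$, the finite part from the divisibility splitting), whereas the paper's route is shorter but hides the work inside the cited characterization. One minor comment: the preliminary paragraph in which you identify $G_0$ and $G/G_0$ separately is redundant once you appeal to $G\cong\RR^e\times\ZZ^f\times K$; you could drop it, or alternatively drop the citation and carry out the lifting of $\ZZ^f$ and the complementing of $\RR^e$ by hand as you sketch at the end.
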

\begin{proof}
  The groups with a product decomposition as in~$(*)$ are clearly
  compactly generated Lie groups. Among all locally compact abelian
  groups, those of the form~$(*)$ are characterized by the property of
  being compactly generated and having no small subgroups,
  cf.~\cite[21.17]{MR2226087}. Since a Lie group has no small
  subgroups, the assertion follows. 
\end{proof}

\begin{theo}\label{KOconnLie}
  Exactly the compactly generated abelian Lie groups occur as $\KO{L}$
  for a suitable connected Lie group~$L$. In other words, we have
  $\KO{\Conn\Lie}=\CgAL$.
\end{theo}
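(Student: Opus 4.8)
The plan is to prove the equality $\KO{\Conn\Lie}=\CgAL$ by establishing the two inclusions separately; the inclusion $\CgAL\subseteq\KO{\Conn\Lie}$ is the constructive (easy) direction. By \ref{membersCgAL} every member of $\CgAL$ is isomorphic to $\ZZ^f\times\prod_{j\in J}(\ZZ/d_j\ZZ)\times(\RR/\ZZ)^c\times\RR^e$ with $f,c,e$ nonnegative integers and $J$ finite, so it is a product of \emph{finitely many} elementary factors. Each such factor has already been realized above: $\ZZ\cong\KO{S}$ and $\ZZ/d_j\ZZ\cong\KO{S_{d_j}}$ by \ref{Sl2}, $\RR/\ZZ\cong\KO{H/Z}$ by \ref{realHeis}, and $\RR\cong\KO{(S\times H)/K}$ by \ref{SL2timesH}. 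Forming the corresponding finite cartesian product $L$ of these connected Lie groups --- which is again a connected Lie group --- and applying the product formula $\KO{\prod_{j}A_j}=\prod_{j}\KO{A_j}$ of \ref{cartProd}, I obtain a connected Lie group with $\KO{L}$ isomorphic to the prescribed group. This gives $\CgAL\subseteq\KO{\Conn\Lie}$.

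For the reverse inclusion let $L$ be an arbitrary connected Lie group; I must show $\KO{L}\in\CgAL$. By \ref{connCenter} the group $\KO{L}$ is central, hence abelian, and it is closed in $L$; as a closed subgroup of a Lie group it is itself a Lie group, so it is an abelian Lie group and in particular has no small subgroups. In view of the characterization of $\CgAL$ in \ref{membersCgAL}, it then remains only to prove that $\KO{L}$ is \emph{compactly generated}.

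This last point is the heart of the matter, and my plan is to derive it from the fact that the whole center $\Z{L}$ is a compactly generated abelian Lie group: its identity component $(\Z{L})_0\cong\RR^a\times(\RR/\ZZ)^b$ is compactly generated because $L$ is finite-dimensional, while the component group $\Z{L}/(\Z{L})_0$ is finitely generated by the structure theory of connected Lie groups (one reduces along the universal covering, using that $\pi_1$ of a connected Lie group is finitely generated, to the center of a simply connected group). Writing $\Z{L}\cong\RR^a\times\ZZ^b\times C$ with $C$ compact, I would then show that any closed subgroup $H$ of such a group is compactly generated: the image of $H$ under the projection modulo the compact factor $C$ is a closed --- hence finitely generated --- subgroup of $\RR^a\times\ZZ^b$, whereas $H\cap C$ is compact, so $H$ is compact-by-finitely-generated and therefore compactly generated. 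Applied to $H=\KO{L}$ this yields compact generation, and \ref{membersCgAL} then places $\KO{L}$ in $\CgAL$, completing the inclusion. Alternatively, since $(\KO{L})_0$ is a closed connected central, hence normal, subgroup contained in $\KO{L}$, one may use \ref{KOquotients}\ref{wellBehavedQuotient} to identify the discrete part $\KO{L}/(\KO{L})_0$ with $\KO{L/(\KO{L})_0}$ and argue directly that this discrete central subgroup is finitely generated.

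The main obstacle is precisely this compact-generation step --- controlling the component group of the center --- whereas the product reduction for the first inclusion and the no-small-subgroups characterization are routine given \ref{membersCgAL}, \ref{cartProd}, and the examples. The containment $\KO{L}\le\closure{L'}$ from \ref{KOinZcapComm} offers an alternative handle on the size of $\KO{L}$, but I do not expect it to simplify the argument.
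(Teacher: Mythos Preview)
Your proposal is correct and follows the paper's approach closely: the constructive direction is identical (via \ref{membersCgAL}, the examples \ref{realHeis}, \ref{Sl2}, \ref{SL2timesH}, and the product formula \ref{cartProd}), and the converse likewise rests on \ref{connCenter}. The only difference is that the paper dispatches the converse in a single sentence --- it simply notes that $\KO{G}$ is a closed abelian subgroup of the connected Lie group~$G$ and therefore lies in $\CgAL=\LCA\cap\SepLie$, using that every closed subgroup of a connected (hence almost connected) Lie group is automatically in $\SepLie$ --- so your explicit compact-generation argument via the structure of $\Z{L}$, while sound, does more work than the paper deems necessary.
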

\begin{proof}
  Each member of the class $\CgAL$ is isomorphic to a product
  \[
  \ZZ^f\times \prod_{j\in J}(\ZZ/d_j\ZZ) \times (\RR/\ZZ)^c
  \times\RR^e
  \]
  where $f,e,c$ are nonnegative integers and $(d_j)_{j\in J}$ is a
  finite family of positive integers, cf.~\ref{membersCgAL}. Thus the
  assertion of our theorem follows from~\ref{cartProd} together with
  the examples given in~\ref{realHeis}, \ref{Sl2},
  and~\ref{SL2timesH}. Explicitly, the group
  \[
  L:= S^f \times \prod_{j\in J} S_{d_j} \times (H/Z)^c
  \times ((S\times H)/K)^e
  \]
  satisfies our requirements. In order to prove the converse it
  suffices to note that $\KO{G}$ is a closed abelian subgroup of $G$
  if $G\in\Conn\Lie$, and thus lies in~$\CgAL$. 
\end{proof}

\begin{rema}
  Even for a simply connected Lie group~$L$ with simple complex Lie
  algebra it need not be true that $L/\KO{L}$ has an
  \emph{irreducible} faithful ordinary representation. For instance,
  consider the simply connected covering $\Spin{8}{\CC}$ of
  $\SO{8}{\CC}$; the center of that group is non-cyclic and cannot be
  mapped faithfully into the centralizer of an irreducible ordinary
  representation because that centralizer is a subgroup of the
  multiplicative group of Hamilton's quaternions by Schur's
  Lemma. However, the group $\Spin{8}{\CC}$ is linear, and
  $\KO{\Spin{8}{\CC}}$ is trivial. 
\end{rema}

\begin{theo}\label{finiteIndex}
  If\/ $U$ is an open normal subgroup of finite index in $G$ then
  $\KO{U} = \KO{G}$. 
\end{theo}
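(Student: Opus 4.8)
The plan is to establish the two inclusions separately. The inclusion $\KO{U}\le\KO{G}$ is immediate: it is the first assertion of~\ref{KOquotients} (equivalently, apply~\ref{fullyInv} to the inclusion map $U\to G$). The reverse inclusion $\KO{G}\le\KO{U}$ is the substantial part, and induced representations are the natural tool.

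Before inducing, I would first check that $\KO{G}\le U$, so that representations of~$U$ can be evaluated on elements of $\KO{G}$. Since $U$ is normal of finite index, the quotient $G/U$ is a finite group and hence has a faithful ordinary representation $\sigma$ (for instance its regular representation). Composing $\sigma$ with the quotient map $q\colon G\to G/U$ produces a member of $\OR{G}$ with kernel exactly~$U$, whence $\KO{G}\le\ker(\sigma\circ q)=U$.

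Now fix $x\in\KO{G}\le U$ and suppose, for contradiction, that $x\notin\KO{U}$, so that some $\rho\colon U\to\GL[n]{\CC}$ in $\OR{U}$ satisfies $\rho(x)\ne\id$. I would form the induced representation $\ind{U}{G}\rho$. Because $U$ is open of finite index, this is a finite-dimensional continuous representation of~$G$, i.e. a member of $\OR{G}$. The crucial computation is its behaviour on~$U$: fixing a transversal $T$ of $U$ in $G$ with $e\in T$ and using the normality of~$U$, the element $x$ preserves each summand of the induced module indexed by $t\in T$ and acts on it through the conjugate representation $u\mapsto\rho(t^{-1}ut)$; on the summand indexed by $e$ this is just $\rho$, so there $\ind{U}{G}\rho$ acts as $\rho(x)\ne\id$. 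Hence $(\ind{U}{G}\rho)(x)\ne\id$, contradicting $x\in\KO{G}$. Therefore $\rho(x)=\id$ for every $\rho\in\OR{U}$, and with $x\in U$ this yields $x\in\KO{U}$; combining the two inclusions gives $\KO{U}=\KO{G}$.

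I expect the main obstacle to be the justification that $\ind{U}{G}\rho$ really is a \emph{continuous} ordinary representation of~$G$ rather than a merely abstract one: this is where openness of~$U$ (not just finiteness of the index) is used, since it makes $G$ a union of finitely many open cosets on which the induced action is controlled by the continuous map~$\rho$. Once the standard model of the induced module is in place, the remaining transversal bookkeeping showing that $x$ is not annihilated is routine and rests only on the normality of~$U$, which keeps each left coset $tU$ invariant under left multiplication by $x\in U$.
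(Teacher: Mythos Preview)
Your proof is correct and follows essentially the same route as the paper: both use the induced representation $\ind{U}{G}\rho$ together with the regular representation of the finite quotient $G/U$ to show $\KO{G}\le\KO{U}$, and both cite~\ref{KOquotients} for the reverse inclusion. The only cosmetic difference is that the paper combines the induced representation and the pulled-back regular representation of $G/U$ into a single ordinary representation of~$G$ whose kernel is contained in $\ker\rho$, whereas you first use the regular representation to secure $\KO{G}\le U$ and then apply the induced representation on its own; the ingredients and the logic are the same.
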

\begin{proof}
  For any $\rho\in\OR{U}$ the $U$-module $M_\rho$ associated with
  $\rho$ yields an induced module $L_\rho:=\ind{U}{G}M_\rho$,
  cf.~\cite[8.4, p.\,230\,f]{MR1357169}
  or~\cite[XVIII~7.3]{MR783636}. The corresponding representation
  $\lambda\colon G\to\GL{(L_\rho)}$ can now be combined with the
  faithful regular representation $\mu\colon G/U\to\GL{(\CC^{G/U})}$
  of the finite quotient to obtain an ordinary representation of $G$
  whose kernel is contained in $\ker\rho$.  Since $\rho\in\OR{U}$ was
  arbitrary we obtain $\KO{G}\le\KO{U}$. The reverse inclusion is
  clear from~\ref{KOquotients}.
\end{proof}

  In an almost connected Lie group~$G$ the discrete and compact
  quotient $G/G_0$ is finite. Thus~\ref{finiteIndex} yields: 
\begin{coro}\label{almConnLie}
  If\/ $G$ is an almost connected Lie group then
  $\KO{G}=\KO{G_0}$. In particular, we have $\KO{\almConn\Lie} =
  \KO{\Conn\Lie} = \CgAL$. 
\qed
\end{coro}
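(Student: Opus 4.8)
The plan is to prove the final corollary~\ref{almConnLie} by reducing the almost connected case to the connected case via Theorem~\ref{finiteIndex}, and then invoking the already-established identity $\KO{\Conn\Lie}=\CgAL$ from Theorem~\ref{KOconnLie}.

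First I would observe that an almost connected Lie group~$G$ has, by definition, the property that $G/G_0$ is compact, where $G_0$ is the connected component of the identity. For a Lie group the quotient $G/G_0$ is discrete; a discrete compact group is finite. Hence $G_0$ is an open (connected components of Lie groups are open) normal subgroup of finite index in~$G$. This is exactly the hypothesis of Theorem~\ref{finiteIndex} with $U=G_0$, so that result yields $\KO{G}=\KO{G_0}$ immediately. I would state this reduction carefully, since the key point — that ``almost connected'' forces $[G:G_0]<\infty$ in the Lie setting — is what makes Theorem~\ref{finiteIndex} applicable; for general locally compact groups $G/G_0$ could be an infinite compact (profinite) group and the argument would break down.

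Next I would combine this with the determination of the class $\KO{\Conn\Lie}$. Since $G_0$ is a connected Lie group, we have $\KO{G_0}\in\KO{\Conn\Lie}=\CgAL$ by Theorem~\ref{KOconnLie}, whence $\KO{G}=\KO{G_0}\in\CgAL$. This shows the inclusion $\KO{\almConn\Lie}\subseteq\CgAL$. For the reverse inclusion $\CgAL\subseteq\KO{\almConn\Lie}$, I would note that every connected Lie group is in particular almost connected (trivially, since $G/G_0$ is then a one-point, hence compact, group), so $\Conn\Lie\subseteq\almConn\Lie$ and therefore $\KO{\Conn\Lie}\subseteq\KO{\almConn\Lie}$; as the left-hand side already equals $\CgAL$, we get $\CgAL\subseteq\KO{\almConn\Lie}$. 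Putting the two inclusions together gives the chain of equalities $\KO{\almConn\Lie}=\KO{\Conn\Lie}=\CgAL$.

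I do not expect any genuine obstacle here: the corollary is essentially a packaging of Theorems~\ref{finiteIndex} and~\ref{KOconnLie}. The only place demanding a word of care is the very first step, namely justifying that almost connectedness of a \emph{Lie} group is equivalent to finiteness of the component group, so that the finite-index hypothesis of~\ref{finiteIndex} is met; this is precisely the point flagged in the surrounding text (``where it actually means that the number of connected components is finite''). Everything else is formal manipulation of the functor $\KOf$ and the already-proven class identity.
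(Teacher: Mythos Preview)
Your proof is correct and follows precisely the paper's own argument: the paper simply observes that for an almost connected Lie group the quotient $G/G_0$ is discrete and compact, hence finite, so Theorem~\ref{finiteIndex} applies with $U=G_0$, and then invokes Theorem~\ref{KOconnLie}. Your additional remarks on the two inclusions $\KO{\almConn\Lie}\subseteq\CgAL$ and $\CgAL\subseteq\KO{\almConn\Lie}$ just spell out what the paper leaves implicit.
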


\begin{rema}
  One would of course like to extend~\ref{almConnLie} to the classes
  $\almConn\LCG \subset \almConn\ProLie$. %
  The ordinary representations of the compact quotient separate the
  points; thus $\KO{G}\le G_0$ if $G\in\almConn\ProLie$. If one wants
  to proceed along the lines of the proof of~\ref{finiteIndex} then
  there remains the problem to extend a representation of $G_0$ via
  induction. This question is treated by Mackey~\cite{MR0042420} where
  an invariant scalar product is assumed and the group in question is
  required to be locally compact \emph{and separable}.
  Note that~\ref{finiteIndex} yields $\KO{U}=\KO{G}$ for each open
  normal subgroup of~$G$ but $\KO{G_0}$ might still be smaller
  although $G_0$ is the intersection of those open normal subgroups
  (cf.~\cite[6.8]{MR2226087}).
\end{rema}

\section{Examples that are not Lie groups}

A natural source of locally compact groups that are not Lie groups
originates from the fact that the class $\CG$ is closed under
arbitrary cartesian products. Another well-understood (and rich)
class of locally compact groups is the class $\LCA$.
Both classes do not contribute to $\KO{\LCG}$: 

\begin{theo}\label{KOCG}\label{KOLCA}
  The class $\KO{\CG}\cup\KO{\LCA}\cup\KO{\Ab\ProLie}$ consists of the
  trivial group alone. In fact $\KO{G}$ is trivial for every abelian
  proto-Lie group $G$.
\end{theo}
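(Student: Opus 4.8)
The plan is to prove three statements: (i)~$\KO{G}=\{e\}$ for every compact group $G$, (ii)~$\KO{A}=\{e\}$ for every $A\in\LCA$, and (iii)~$\KO{G}=\{e\}$ for every abelian proto-Lie group $G$. These cover $\KO{\CG}$, $\KO{\LCA}$ and---since every pro-Lie group is complete and hence proto-Lie---both $\KO{\Ab\ProLie}$ and the final ``in fact'' assertion. I would dispatch (i) by the Peter--Weyl theorem and build (iii) on (ii), which in turn rests on Pontryagin duality.

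For a compact group $G$ the Peter--Weyl theorem (cf.\ \cite{MR2226087}) provides continuous finite-dimensional unitary representations that separate the points of $G$. Each of these belongs to $\OR{G}$, so every $g\in G\setminus\{e\}$ is moved by some $\rho\in\OR{G}$, and therefore $\KO{G}=\{e\}$. This settles $\KO{\CG}$, in particular for the non-abelian compact groups, which are not reached by (iii).

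For (ii), if $A\in\LCA$ then Pontryagin duality (see \cite[Ch.\,F]{MR2226087}) asserts that the characters in $\dual{A}=\Hom{A}{\TT}$ separate the points of $A$. A character is an ordinary representation into $\TT\le\CC^\times\cong\GL[1]{\CC}$, so $\KO{A}=\{e\}$. For (iii) let $G$ be an abelian proto-Lie group. Its Hausdorff completion $\complete{G}$ is an abelian pro-Lie group containing $G$ as a dense subgroup, and \ref{fullyInv} applied to the inclusion $G\hookrightarrow\complete{G}$ gives $\KO{G}\le\KO{\complete{G}}$; hence it suffices to prove the statement for abelian pro-Lie groups, so assume $G\in\Ab\ProLie$. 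Writing $G=\prolim G/N$ as the projective limit of its Lie-group quotients over the closed normal subgroups $N$ with $G/N$ a Lie group (\cite{MR2337107}), each $G/N$ is an abelian Lie group, hence locally compact, so that $G/N\in\LCA$; moreover the projections $q_N\colon G\to G/N$ separate the points of $G$. Given $g\in G\setminus\{e\}$, choose $N$ with $q_N(g)\ne e$ and, by (ii), a character $\chi$ of $G/N$ with $\chi(q_N(g))\ne1$; then $\chi\circ q_N\in\OR{G}$ does not kill $g$. Thus $\KO{G}=\{e\}$.

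The step I expect to be the main obstacle is bridging from $G$ to groups where duality is available: an abelian pro-Lie group need not be locally compact (for example $\RR^{I}$ with $I$ infinite), so Pontryagin duality cannot be applied to $G$ itself. The remedy is to factor through the Lie-group quotients $G/N$, which \emph{are} locally compact, using the structural fact from the theory of pro-Lie groups that these quotient maps already separate the points of $G$. The remaining ingredients---Peter--Weyl, the reduction from proto-Lie to pro-Lie via completion and \ref{fullyInv}, and the observation that a character is a one-dimensional ordinary representation---are routine; the crucial recognition is simply that abelian Lie groups are locally compact.
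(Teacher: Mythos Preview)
Your proof is correct and follows essentially the same strategy as the paper: Peter--Weyl for compact groups, Pontryagin duality for $\LCA$, and the reduction of the abelian proto-Lie case to $\LCA$ via the observation that abelian Lie groups are locally compact. The only difference is that your detour through the completion $\complete{G}$ is unnecessary: by definition of a proto-Lie group, the continuous homomorphisms from $G$ to Lie groups already separate points, so you can argue directly with the quotients $G/N$ (or, more generally, with the images of $G$ in Lie groups) without first passing to the pro-Lie completion.
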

\begin{proof}
  The Peter-Weyl Theorem asserts that the
  ordinary representations separate the points in any compact group,
  cf.~\cite[14.33]{MR2226087}. Thus $\KO{\CG}$ contains only the
  trivial group. 

  It remains to consider $A\in\LCA$. Every character of~$A$ is a continuous
  homomorphism from $A$ into $\RR/\ZZ\cong\SU[1]{\CC}<\GL[1]{\CC}$ and
  thus belongs to~$\OR{A}$. Pontryagin duality
  (cf.~\cite[22.6]{MR2226087}) implies that the characters separate
  the points of~$A$. Thus $\KO{A}$ is trivial.

  For a proto-Lie group $G$ the continuous homomorphisms to Lie groups
  separate the points. If $G$ is abelian then it suffices to consider
  homomorphisms from $G$ to abelian Lie groups, and it follows that
  $\KO{G}$ is trivial.  
\end{proof}

\begin{exam}\label{powerD}
  Let $C$ be a compact simple %
  non-abelian group and let $D$ be any infinite discrete group.  For
  instance, the group $C=\SO{3}{\RR}$ would do --- in any case, $C$
  will be connected (see~\cite[4.13]{MR2226087}).  The product
  topology turns $C^D$ into a compact connected group belonging to
  $\Conn\LCG\subset\Conn\ProLie$ but not to~$\Lie$.  We form the
  semidirect product $G:=D\ltimes C^D$ where conjugation with $d\in D$
  maps $(c_j)_{j\in D} \in C^D$ to $(b_j)_{j\in D}$ with
  $b_j=c_{dj}$. Then $G\in\LCG$.

  The closed normal subgroups of~$C^D$ are in one-to-one
  correspondence with the lattice of subsets of~$D$,
  cf.~\cite[3.12]{MR1859180}: to $J\subseteq D$ we associate the 
  product $\prod_{j\in D}B_j$ where $B_j=C_j$ if $j\in J$ and $B_j$
  is trivial if $j\in D \setminus J$. 
  Consequently, every non-trivial closed normal subgroup of $G$
  contains the connected component $G_0\cong C^D$, and we
  have $\KO{G}=G_0\cong C^D$ here.

  Our investigation of the normal subgroups also makes clear that $G$
  does not belong to~$\ProLie$.
\end{exam}

\begin{exam}
  Let $S$ again denote the universal covering group of
  $\SL[2]{\RR}$. As in~\cite[Ex.\,0.6]{MR1391958} we consider the
  filter basis $\mathcal{N}(S)^\times$ of all \emph{nontrivial} subgroups of
  the center of~$S$.  The projective limit
  $G:=\lim_{N\in\mathcal{N}(S)^\times}S/N$ is a connected locally
  compact group (see~\cite[2.12]{MR1391958}) with a center $\Z{G}$
  isomorphic to the universal zero dimensional compactification
  of~$\ZZ$.

  In other words $\Z{G}$ is isomorphic to $\prod_{p\in\PP}\ZZ_p$ where $\PP$
  is the set of primes and $\ZZ_p$ is the additive group of $p$-adic
  integers. Note that $\Z{G}$ has a unique quotient of order~$2$
  because $\Z{G}/\smallset{z^2}{z\in\Z{G}} \cong \prod_{p\in\PP}\ZZ_p /
  \prod_{p\in\PP}2\,\ZZ_p \cong \ZZ_2/2\,\ZZ_2 \cong \ZZ/2\,\ZZ$.
  From~\cite[2.14]{MR1391958} we infer that $G$ and
  $G/\Z{G}\cong\PSL[2]{\RR}$ have essentially the same Lie algebra. 

  As $G/\Z{G}\cong\PSL[2]{\RR}$ is a simple group, the only proper
  normal subgroups of $G$ are those of~$\Z{G}$.  If $\rho$ is an
  ordinary representation of~$G$ then $\rho(\Z{G})$ is a pro-finite
  subgroup of a Lie group and thus finite.
  This means that $\ker\rho=\ker(\rho|_{\Z{G}})$ is co-finite in
  $\Z{G}$, and $G/\ker\rho$ is an extension of Lie groups (namely
  $G/\Z{G}\cong\PSL[2]\RR$ and the finite group $\Z{G}/\ker\rho$). Now
  $G/\ker\rho$ is a connected Lie group, has the same Lie algebra as
  $\PSL[2]\RR$ and possesses a faithful ordinary representation. Thus
  $|\Z{G}/\ker\rho|\le2$ and $\ker\rho$ contains
  $\smallset{z^2}{z\in\Z{G}}\cong\prod_{p\in\PP}\ZZ_p$.
  Since $G/\smallset{z^2}{z\in\Z{G}}\cong\SL[2]\RR$ clearly has a
  faithful ordinary representation we obtain
  $\KO{G}\cong\prod_{p\in\PP}\ZZ_p$. 
\end{exam}

\begin{defs}
  A topological group $G$ is called \emph{monothetic} if there exists
  $g\in G$ such that the closure of the subgroup generated by~$g$ is
  dense in~$G$. Any such $g$ is called a \emph{topological generator}
  of~$G$. The class of all compact monothetic (necessarily abelian)
  groups will be denoted by~$\cMon$.

  The one-parameter subgroups of $G$ are the elements of
  $\Hom{\RR}{G}$. We say that $G$ has a \emph{dense one-parameter
    subgroup} if there is $\varphi\in\Hom{\RR}{G}$ with
  $\closure{\varphi(\RR)}=G$.
\end{defs}

\begin{lemm}\label{lemm:monothetic}
  \begin{enumerate}
  \item A locally compact monothetic group is either compact or 
    isomorphic to~$\ZZ$.
  \item If $A\in\LCA$ has a dense one-parameter subgroup then $A$ is
    either isomorphic to~$\RR$ or $A$ is a connected compact
    monothetic group.
  \item\label{dualMonothetic} The Pontryagin duals of compact monothetic groups are the
    subgroups of the discrete group
    $\QQ^{(2^{\aleph_0})}\times\QQ/\ZZ$.
  \item The duals of connected compact monothetic groups are the
    subgroups of the discrete group\/~$\QQ^{(2^{\aleph_0})} \cong
    \RR_{\mathrm{discr}}$.
  \end{enumerate}
\end{lemm}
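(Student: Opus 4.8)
The plan is to translate all four assertions into statements about the \emph{discrete} dual groups, proving~(c) and~(d) by Pontryagin duality and then deducing the compact case of~(b) from them; only~(a) requires a genuinely topological input, which I would quote from the literature. Throughout I use the abstract-group identifications recorded in the footnote to the Remarks of Section~\ref{sec:openQuestions} above, namely $\TT_{\mathrm{discr}}\cong\QQ^{(2^{\aleph_0})}\oplus\QQ/\ZZ$ and $\QQ^{(2^{\aleph_0})}\cong\RR_{\mathrm{discr}}$. For~(c) the device is the evaluation map: for a compact abelian group $G$ and $g\in G$ set $\varepsilon_g\colon\dual{G}\to\TT$, $\chi\mapsto\chi(g)$. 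Since the characters separate points, $g$ is a topological generator exactly when $\varepsilon_g$ is injective; and as $G$ is compact, $\dual{G}$ is discrete, so $\varepsilon_g$ realizes $\dual{G}$ as a subgroup of $\TT_{\mathrm{discr}}$. Conversely, any subgroup $H\le\TT_{\mathrm{discr}}$ is the dual of the compact group $G:=\dual{H}$: the inclusion $H\hookrightarrow\TT$ is itself an element $g$ of $\Hom{H}{\TT}=\dual{H}=G$, and under the canonical isomorphism $\dual{G}\cong H$ the map $\varepsilon_g$ becomes this inclusion, hence is injective, so $g$ generates~$G$. This gives the asserted bijection between duals of compact monothetic groups and subgroups of $\QQ^{(2^{\aleph_0})}\times\QQ/\ZZ$.

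For~(d) I would add the standard duality fact that a compact abelian group is connected if and only if its dual is torsion-free (cf.~\cite{MR551496}). A torsion-free subgroup of $\TT_{\mathrm{discr}}\cong\QQ^{(2^{\aleph_0})}\oplus\QQ/\ZZ$ must meet the torsion summand $\QQ/\ZZ$ trivially, hence embeds via the projection into $\QQ^{(2^{\aleph_0})}\cong\RR_{\mathrm{discr}}$; conversely every subgroup of $\QQ^{(2^{\aleph_0})}$ is torsion-free and sits inside $\TT_{\mathrm{discr}}$. Combined with~(c), this identifies the duals of connected compact monothetic groups with the subgroups of $\QQ^{(2^{\aleph_0})}\cong\RR_{\mathrm{discr}}$.

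It remains to settle~(a) and~(b). For~(a) a topological generator $g$ of a locally compact monothetic group either generates a discrete (hence closed) cyclic subgroup, forcing $G=\langle g\rangle$ to be cyclic and so finite or $\cong\ZZ$, or generates a non-discrete subgroup, in which case the classical theorem on monothetic locally compact groups (cf.~\cite{MR551496}) gives that $G$ is compact. For~(b), a dense one-parameter subgroup $\varphi\colon\RR\to A$ has connected image, so $A$ is connected, and the structure theorem for connected locally compact abelian groups writes $A\cong\RR^n\times K$ with $K$ compact connected. The composite of $\varphi$ with the projection onto $\RR^n$ is linear, $t\mapsto tv$, and a single line is dense in $\RR^n$ only for $n\le 1$. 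If $n=1$ one checks that any limit point $(s,\theta)$ of $\varphi(\RR)$ forces $s=\lim t_k$ and $\theta=\lim(\pi_K\varphi)(t_k)=(\pi_K\varphi)(s)$, so $\overline{\varphi(\RR)}=\varphi(\RR)$ is a graph homeomorphic to~$\RR$; density then gives $A\cong\RR$ and $K$ trivial. If $n=0$ then $A=K$ is compact connected, and dualizing, $\dual{\varphi}\colon\dual{A}\to\dual{\RR}\cong\RR$ is injective because $\varphi$ has dense image; thus $\dual{A}$ embeds into $\RR_{\mathrm{discr}}$, and~(d) makes $A$ a connected compact monothetic group.

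The main obstacle is the duality bookkeeping in~(c): verifying that \emph{every} abstract embedding of $\dual{G}$ into $\TT_{\mathrm{discr}}$ is realized by an evaluation $\varepsilon_g$, so that the purely algebraic condition genuinely characterizes monotheticity, together with matching the torsion-free duals to the real model $\RR_{\mathrm{discr}}\cong\QQ^{(2^{\aleph_0})}$. The one non-formal ingredient is the dichotomy in~(a) that a non-discrete cyclic subgroup of a locally compact group has compact closure, which I would cite rather than reprove.
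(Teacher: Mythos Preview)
Your argument is correct and follows essentially the paper's route: for~(c) and~(d) the paper dualizes the map $\ZZ\to A$ sending $1$ to the generator, which is precisely your evaluation $\varepsilon_g$, and then invokes the torsion-free/connectedness duality just as you do. The paper simply cites Weil's Lemma for both~(a) and~(b) and does not spell out the converse direction of~(c); your self-contained treatment of~(b) via the splitting $A\cong\RR^n\times K$ and your explicit converse for~(c) are sound additions that go beyond what the paper records, but the underlying strategy is the same.
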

\begin{proof}
  The first two assertions are known as Weil's Lemma,
  cf.~\cite[6.26]{MR2226087}.
  Assertion~\ref{dualMonothetic} is due to~\cite{MR0006543}, we
  present the argument for the reader's convenience.
  The existence of a cyclic subgroup means that there is an
  epimorphism $\eta\colon\ZZ\to A$ in the category $\LCA$ which upon
  dualizing gives a monomorphism
  $\dual{\eta}\colon\dual{A}\to\dual{\ZZ}\cong\RR/\ZZ$;
  cf.~\cite[15.5, 15.7, 20.13]{MR2226087}. Since $A$ is compact the
  dual $\dual{A}$ is discrete (see.~\cite[20.6]{MR2226087}), and we
  may interpret $\dual{\eta}$ as an embedding of $\dual{A}$ into the
  discrete group $\RR_{\mathrm{discr}}/\ZZ \cong
  \QQ^{(2^{\aleph_0})}\times\QQ/\ZZ$.  Now connectedness of~$A$
  implies that $\dual{A}$ is torsion-free
  (cf.~\cite[23.18]{MR2226087}) and $\dual{\eta}$ induces an embedding
  of $\dual{A}$ into the quotient $\QQ^{(2^{\aleph_0})}$ of
  $\RR_{\mathrm{discr}}/\ZZ$ modulo its torsion group $\QQ/\ZZ$.

  In order to prove the last assertion we dualize the epimorphism
  $\varphi\colon\RR\to A$ to a monomorphism
  $\dual\varphi\colon\dual{A}\to\dual{\RR}\cong\RR$ and then replace
  the range by the discrete group $\RR_{\mathrm{discr}}$.
\end{proof}

\begin{exam}\label{exa:HeisenbergWithCpCenter}
  Generalizing the construction described in~\ref{realHeis} we take
  $A\in\LCA$ with a dense one-parameter group $\varphi\in\Hom{\RR}{A}$
  and define a multiplication~$*$ on $\RR^2\times A$ by
  $(a,b,x)*(c,d,y)=(a+c,b+d,x+y+\varphi(ad-bc))$. Then
  $H_\varphi:=(\RR^2\times A,*)$ is a connected locally compact
  group. If $A$ is compact we proceed as in~\ref{realHeis} to see
  $\KO{H_\varphi}=\{0\}^2\times A\cong A$. 
\end{exam}

\begin{exam}\label{exa:monotheticConnCA}
  Again, let $S$ denote the universal covering group of
  $\SL[2]{\RR}$. Assume that $A$ is a compact monothetic group and
  let~$c$ be a topological generator. Proceeding as
  in~\ref{SL2timesH} we construct the quotient $G$ of $S\times A$ such
  that $\zeta^2$ is identified with~$c$.  Then an argument as
  in~\ref{SL2timesH} shows that $\KO{G}$ contains the closure of~$c$
  in~$G$. This is a subgroup isomorphic to~$A$.
\end{exam}

\section{Connected pro-Lie groups}

In order to show $\CA\subset\KO{\Conn\ProLie}$ we
study free compact abelian groups. 
  
\begin{defs}\label{defs:weight}
  For a pointed compact space $X$ let $X/\text{conn}$ be the totally
  disconnected compact space of connected components of~$X$ and let
  $\wt(X)$ denote the weight of $X$ (i.e., the minimal cardinality of
  a basis for the topology on~$X$). Put %
  $\wt_0(X) := \wt(X)-1$; this coincides with $\wt(X)$ if the latter
  is infinite.
  
  Let $C_0(X,\TT)$ denote the set of all continuous functions from $X$
  to~$\TT$ mapping the base point to~$0$. This set endowed with the
  compact-open topology and the pointwise operations becomes a
  topological group. The quotient $\left[X,\TT\right] := C_0(X,\TT) /
  C_0(X,\TT)_0$ modulo the connected component $C_0(X,\TT)_0$ is
  discrete (cf. the paragraph preceding~\cite[8.50]{MR2261490}); its
  compact Pontryagin dual $\dual{\left[X,\TT\right]}$ plays a crucial
  role in the structure of the free compact abelian group $F(X)$.
\end{defs}
  
\begin{theo}\label{structureFreeCA}
  For every nonsingleton pointed compact space the free
  compact abelian group $F(X)$ is isomorphic to
  $(\dual{\QQ})^{\wt(X)^{\aleph_0}} \times
  \prod_{p\in\PP}\ZZ_p^{\wt_0(X/\text{conn})} \times
  \dual{\left[X,\TT\right]}$.  The group $\left[X,\TT\right]$ is
  torsion-free, and its Pontryagin dual\/ $\dual{\left[X,\TT\right]}$ is
  a quotient of some power of\/~$\dual\QQ$.
\end{theo}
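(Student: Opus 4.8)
The plan is to establish the product decomposition of the free compact abelian group $F(X)$ by dualizing to the category of discrete abelian groups, where free compact abelian groups correspond to certain "co-free" structures. The free compact abelian group $F(X)$ over a pointed compact space $X$ is characterized by a universal property: continuous base-point-preserving maps $X \to A$ into compact abelian groups $A$ factor uniquely through $F(X)$. Passing to Pontryagin duals, I expect $\dual{F(X)}$ to be the discrete group $C_0(X,\TT)$ of base-point-preserving continuous $\TT$-valued functions on $X$ (made discrete), so the whole computation reduces to understanding the abstract group structure of the discrete abelian group $C_0(X,\TT)$ and then dualizing back.

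First I would identify $\dual{F(X)} \cong C_0(X,\TT)$ (as an abstract, hence discrete, abelian group via the duality), and analyze its structure by the short exact sequence
\[
0 \to C_0(X,\TT)_0 \to C_0(X,\TT) \to [X,\TT] \to 0
\]
coming from Definition~\ref{defs:weight}. Here $[X,\TT]$ is discrete by construction. The connected component $C_0(X,\TT)_0$ is a divisible torsion-free abelian group (a $\QQ$-vector space), since the compact-open-topology connected component of an abelian group of continuous functions into $\TT$ consists of null-homotopic maps and is divisible; I would compute its rank to be $\wt(X)^{\aleph_0}$, which accounts for the factor $(\dual\QQ)^{\wt(X)^{\aleph_0}}$ after dualizing (since $\dual{\QQ^{(\kappa)}} \cong (\dual\QQ)^\kappa$). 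The torsion part of the dual structure — responsible for the $\prod_{p\in\PP}\ZZ_p^{\wt_0(X/\mathrm{conn})}$ factor — comes from the totally disconnected quotient $X/\mathrm{conn}$: continuous maps into the torsion subgroup $\QQ/\ZZ \le \TT$ factor through $X/\mathrm{conn}$, and the free compact abelian group on a totally disconnected space of weight $\kappa$ has the profinite shape $\prod_{p}\ZZ_p^{\kappa}$ (up to the base-point adjustment encoded in $\wt_0$).

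The torsion-freeness of $[X,\TT]$ I would prove directly: if $nf \in C_0(X,\TT)_0$ for some $f\in C_0(X,\TT)$ and $n>0$, then $nf$ lifts to a continuous $\RR$-valued function, hence so does $f$ (divide the lift by $n$), so $f$ itself is null-homotopic and lies in $C_0(X,\TT)_0$; this shows the quotient $[X,\TT]$ has no torsion. That $\dual{[X,\TT]}$ is a quotient of a power of $\dual\QQ$ then follows by dualizing: a torsion-free discrete abelian group of rank $\lambda$ embeds into $\QQ^{(\lambda)}$, and dualizing this monomorphism yields an epimorphism $(\dual\QQ)^{\lambda} \cong \dual{\QQ^{(\lambda)}} \twoheadrightarrow \dual{[X,\TT]}$.

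**The hard part will be** pinning down the exact cardinal exponents, especially proving that the rank of the divisible part $C_0(X,\TT)_0$ is precisely $\wt(X)^{\aleph_0}$ rather than merely bounded by it, and correctly tracking the base-point normalization that replaces $\wt$ by $\wt_0$ on the totally disconnected factor. The lower bound on the rank is the delicate point: I would need to produce $\wt(X)^{\aleph_0}$ many $\RR$-valued continuous functions on $X$ that are $\QQ$-linearly independent modulo the integer-valued ones, which requires exhibiting enough independent real-valued separating functions — this is where the weight $\wt(X)$ enters through the cardinality of a separating family in $C(X,\RR)$, and the exponent $\aleph_0$ arises because finite rational combinations must be controlled. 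I would lean on the structure theory of free compact abelian groups as developed in~\cite{MR2261490} (around [8.50]) to supply the precise matching of these cardinal invariants rather than reconstructing it from scratch.
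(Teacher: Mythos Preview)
Your proposal is correct and aligned with the paper's approach: the paper simply cites~\cite[1.5.4]{MR849093} and~\cite[8.67]{MR2261490} for the product decomposition and~\cite[1.3.1]{MR849093},~\cite[8.50(ii)]{MR2261490} for torsion-freeness of $[X,\TT]$, while your sketch essentially unpacks what those references contain (the duality $\dual{F(X)}\cong C_0(X,\TT)_{\mathrm{discr}}$, the splitting off of the divisible connected component, and the lifting argument for torsion-freeness). Your argument for the final claim---embedding the torsion-free group $[X,\TT]$ into $\QQ\otimes[X,\TT]\cong\QQ^{(d)}$ and dualizing to obtain the epimorphism $(\dual\QQ)^d\twoheadrightarrow\dual{[X,\TT]}$---is exactly what the paper does, and your acknowledgment that the precise cardinal exponents require leaning on~\cite{MR2261490} matches the paper's reliance on citations for that part.
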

\begin{proof}
  The structure of $F(X)$ is known from~\cite[1.5.4]{MR849093},
  cf.~\cite[8.67]{MR2261490}.  For every compact Hausdorff space
  $X$ the group $\left[X,\TT\right] \cong H^1(X,\ZZ)$ is torsion-free,
  see~\cite[1.3.1]{MR849093}, cf.~\cite[8.50(ii)]{MR2261490}. For
  $d:=\dim_\QQ(\QQ\otimes\left[X,\TT\right])$ we have an embedding of
  $\left[X,\TT\right]$ into
  $\QQ\otimes\left[X,\TT\right]\cong\QQ^{(d)}$ which dualizes to the
  quotient map in question.
\end{proof}

\begin{theo}\label{KOconnProLie}
  The class $\KO{\Conn\ProLie}$ contains at least the class $\aP(\CgAL
  \cup\CA) = \aP({\{\ZZ,\RR\}\cup\CA})$. In particular, we have
  $\Conn{\Ab\ProLie} \subset \KO{\Conn\ProLie}$.

  The class $\KO{\Conn\LCG}$ contains the class $\Small\LCA =
  \fP(\{\ZZ,\RR\}\cup\cMon)$ consisting of all groups of the
  form $\ZZ^f\times A\times\RR^e$ with $e,f\in\NN$ and a compact
  monothetic group~$A$.
\end{theo}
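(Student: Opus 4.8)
The plan is to establish the two class inclusions separately, building on the earlier product results and the explicit examples already constructed.

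For the first inclusion $\aP(\CgAL\cup\CA)\subseteq\KO{\Conn\ProLie}$, I would first observe that by Proposition~\ref{cartProd} the functor $\KOf$ commutes with arbitrary cartesian products, and that $\Conn\ProLie$ is closed under arbitrary products (as noted before Definition~\ref{defs}). Hence it suffices to realize each member of $\CgAL\cup\CA$ as $\KO{G}$ for a single connected pro-Lie group~$G$, and then take products. For the $\CgAL$ part this is exactly what Theorem~\ref{KOconnLie} already gives (realizing every compactly generated abelian Lie group, in particular $\ZZ$ and $\RR$, via the Lie groups built from $S$, $H/Z$, and $(S\times H)/K$). The new ingredient is the compact abelian factors $A\in\CA$. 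For these I would appeal to the Heisenberg-type construction of Example~\ref{exa:HeisenbergWithCpCenter} together with the $S$-amalgamation trick of Example~\ref{exa:monotheticConnCA}: a compact abelian group decomposes (via its structure theory, cf.~\cite{MR2261490}) into a product of monothetic pieces, so by product-preservation it is enough to handle compact \emph{monothetic} groups, which is precisely the content of~\ref{exa:monotheticConnCA}. The identity $\aP(\CgAL\cup\CA)=\aP(\{\ZZ,\RR\}\cup\CA)$ follows because every member of $\CgAL$ is, by Proposition~\ref{membersCgAL}, a finite product of copies of $\ZZ$, $\RR$, finite cyclic groups and circle groups $\RR/\ZZ$, and the latter two types are compact abelian, hence already absorbed into $\CA$.

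The inclusion $\Conn{\Ab\ProLie}\subset\KO{\Conn\ProLie}$ is then a corollary of the first part: by the structure theory of connected abelian pro-Lie groups (cf.~\cite[Ch.\,7,8]{MR2337107}), every such group is a product of copies of $\RR$ and a connected compact abelian group, hence lies in $\aP(\{\RR\}\cup\CA)\subseteq\aP(\CgAL\cup\CA)$. I would note that the inclusion is \emph{proper} because $\KO{\Conn\ProLie}$ contains groups with a $\ZZ^f$ factor (which are not connected) and compact groups that are not connected.

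For the second statement, the class $\KO{\Conn\LCG}$, the argument is analogous but I must work within $\Conn\LCG$, where only \emph{finite} products are available (products in $\Conn\LCG$ coincide with those in $\TG$ and exist only when all but finitely many factors are compact). Thus I would use the finite-product version of Proposition~\ref{cartProd}, namely $\fP(\KO{\mathcal{G}})=\KO{\mathcal{G}}$ when $\fP(\mathcal{G})=\mathcal{G}$, to reduce $\Small\LCA=\fP(\{\ZZ,\RR\}\cup\cMon)$ to its building blocks. The factors $\ZZ$ and $\RR$ are realized by connected \emph{Lie} groups from Examples~\ref{Sl2} and~\ref{SL2timesH} (which lie in $\Conn\LCG$), and each compact monothetic group $A\in\cMon$ is realized by the quotient $G=(S\times A)/{\sim}$ of Example~\ref{exa:monotheticConnCA}; the key point is that this $G$ is \emph{locally compact and connected}, since $S$ is a connected Lie group and $A$ is compact, so $S\times A\in\Conn\LCG$ and the quotient by a closed central subgroup stays in $\Conn\LCG$. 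The main obstacle I anticipate is verifying that the realizing groups actually remain inside $\Conn\LCG$ (rather than merely $\Conn\ProLie$) and that finite products of them — with the single $\RR$ and $\ZZ$ factors and a compact monothetic factor — are legitimate products in $\Conn\LCG$; this is exactly where the compactness of the monothetic factor $A$ is essential, guaranteeing that at most finitely many non-compact factors ($\ZZ^f\times\RR^e$) appear and so the product exists in $\Conn\LCG$.
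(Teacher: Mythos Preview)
Your treatment of the $\Conn\LCG$ assertion and of the identity $\aP(\CgAL\cup\CA)=\aP(\{\ZZ,\RR\}\cup\CA)$ is fine and matches the paper. The gap is in your handling of the compact abelian factors for the $\Conn\ProLie$ statement. You assert that ``a compact abelian group decomposes (via its structure theory) into a product of monothetic pieces'', but this is not a theorem in~\cite{MR2261490} and is in fact false in general. Dually, you would need every discrete abelian group to be a direct sum of subgroups of $\TT_{\mathrm{discr}}\cong\QQ^{(2^{\aleph_0})}\oplus\QQ/\ZZ$; already on the torsion side this fails, since there exist abelian $p$-groups that are not direct sums of cyclic and quasicyclic groups, and on the torsion-free side there exist indecomposable torsion-free abelian groups of rank exceeding~$2^{\aleph_0}$, whose duals are compact connected groups that cannot be written as products of monothetic ones.

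The paper avoids this by a genuinely different device: instead of decomposing $C\in\CA$, it realises $C$ as a \emph{quotient} of a product of monothetic groups. Concretely, $C$ is a quotient of the free compact abelian group $F(C)$, and Theorem~\ref{structureFreeCA} exhibits $F(C)$ as $(\dual{\QQ})^{\alpha}\times\prod_{p}\ZZ_p^{\beta}\times\dual{[C,\TT]}$, where the first two factors are products of monothetic groups and the last is itself a quotient of a power of~$\dual{\QQ}$. Since $\cMon\subseteq\KO{\Conn\LCG}$ by~\ref{exa:HeisenbergWithCpCenter} and~\ref{exa:monotheticConnCA}, and $\KO{\Conn\ProLie}$ is closed under arbitrary products (\ref{cartProd}) and under quotients modulo locally compact subgroups (\ref{quotientsKOConnProLie}), one obtains $\CA=\hQ\aP(\cMon)\subseteq\KO{\Conn\ProLie}$. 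So the missing idea in your argument is precisely the free compact abelian group together with the quotient closure~\ref{quotientsKOConnProLie}; product preservation alone does not suffice.
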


\begin{proof}
  Abelian pro-Lie groups are studied in~\cite{MR2103546},
  cf.~\cite[Ch.\,5]{MR2337107}: in particular, the connected ones are
  of the form $\RR^c\times C$ where $c$ is arbitrary (possibly
  infinite) and $C$ is a connected compact abelian group.  Such a
  group belongs to $\Conn\LCA$ precisely if~$c$ is finite,
  cf.~\cite[24.9]{MR2226087}.

  Let $C$ be a compact abelian group. Then $C$ is a quotient of the
  free compact abelian group $F(C)$. From~\ref{structureFreeCA} we
  know that $F(C)$ is a quotient of a product of compact monothetic
  groups. The class~$\cMon$ of all compact monothetic groups is
  contained in $\KO{\Conn\LCG} \subset \KO{\Conn\ProLie}$
  by~\ref{exa:monotheticConnCA} and~\ref{exa:HeisenbergWithCpCenter}.
  From~\ref{cartProd} and~\ref{quotientsKOConnProLie} we conclude $\CA
  = \hQ\aP(\cMon) \subseteq \KO{\Conn\ProLie}$ and
  $\hQ\fP(\cMon) \subseteq \KO{\Conn\LCG}$.
  
  From~\ref{SL2timesH} we recall that $\ZZ$ and $\RR$ lie in
  $\KO{\Conn\LCG}\subset\KO{\Conn\ProLie}$; it remains to use products
  once again.
\end{proof}

\begin{rema}
  The factor~$A$ in~\ref{KOconnProLie} cannot be arbitrarily large; we
  show $\wt(A_0)\le 2^{\aleph_0}$ in~\ref{connKOsmall} below. %
  If the answer to~\ref{sKOfinite} is affirmative then we know that
  $\Small\LCA$ coincides with~$\KO{\Conn\LCG}$,
  cf.~\ref{KorConnLCGfinGen}.
\end{rema}

\begin{rems}
  We have mentioned in~\ref{structureFreeCA} that the factor
  $\dual{\left[X,\TT\right]}$ of $F(X)$ has a torsion-free dual.
  Conversely, every torsion-free abelian group $A$ is isomorphic to
  $\left[X,\TT\right]$ for some compact connected Hausdorff space
  (namely, for the underlying space $X=|\dual{A}|$ of the Pontryagin
  dual of~$A$), see~\cite[1.3.2]{MR849093}. The corrections
  in~\cite{MR927286} only concern assertions about cardinalities
  (dimension, rank) in~ \cite{MR849093}.
\end{rems}

\begin{prop}\label{noLargeDiscrete}
  Each discrete member of\/
  $\KO{\Conn\ProLie}$ is finitely generated. 
  Each member of\/
  $\KO{\Conn\LCG}$ is compactly generated.
\end{prop}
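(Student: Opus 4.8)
The plan is to prove Proposition~\ref{noLargeDiscrete} by combining the structural results already at our disposal: Theorem~\ref{connCenter} (so that $\KO{G}$ is central, hence abelian, for any connected pro-Lie group), Lemma~\ref{KOinZcapComm} (so that $\KO{G}\le\closure{G'}$), and the sandwich inclusions of Theorem~\ref{KOconnProLie}. The key observation is that a \emph{discrete} member $D$ of $\KO{\Conn\ProLie}$ is a discrete abelian group that arises as $\KO{G}$ for some connected pro-Lie group $G$; since $\KO{G}=\KO{G/\KO{G}}$-free by the radical property~\ref{KOquotients}\ref{radical}, and since $\KO{\Conn\ProLie}\subseteq\cS(\Conn{\Ab\ProLie})$ was announced in the overview, $D$ must be a closed (hence, being discrete, a \emph{direct summand–like}) subgroup of a connected abelian pro-Lie group. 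The crucial structural fact is that a connected abelian pro-Lie group has the form $\RR^c\times C$ with $C$ compact connected abelian (cited in the proof of~\ref{KOconnProLie}). A discrete subgroup of such a group meets the compact factor in a finite group and projects to a discrete subgroup of $\RR^c$, which is free abelian; the difficulty is bounding its rank.

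First I would reduce to the abelian case: given a discrete $D=\KO{G}$ with $G\in\Conn\ProLie$, Theorem~\ref{connCenter} gives $D\le\Z{G}$, and passing to the quotient $G/\KO{G}$ changes nothing by~\ref{KOquotients}\ref{radical}, so I may assume $G$ is as small as convenient. Next I would use that $D$ embeds as a closed subgroup of a connected abelian pro-Lie group $\RR^c\times C$. A discrete subgroup of $\RR^c\times C$ has its intersection with the compact connected factor $C$ equal to a discrete subgroup of a connected compact group, which is central and must be trivial (a nontrivial discrete subgroup of a connected group cannot be closed unless the group is disconnected around it—more precisely, a discrete subgroup $\Delta$ of a connected compact abelian group would force $C/\Delta$ to be a connected compact group covered by $C$, and one checks $\Delta$ is then trivial since $C$ is divisible and $\Delta$ is both discrete and contained in a connected compact group). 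Hence $D$ injects into $\RR^c$ as a discrete subgroup, so $D$ is free abelian of some rank $r\le c$.

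The main obstacle is ruling out \emph{infinite} rank, i.e.\ showing $r$ is finite. Here I would invoke the companion result Proposition~\ref{membersCgAL} / the $\Conn\Lie$ computation only indirectly; the clean route is to use that $D=\KO{G}$ is finitely generated iff it has a finite generating set, and to exploit the further structural constraint that $\KO{G}\le\closure{G'}$ together with the fact that in a connected pro-Lie group the relevant representations (the adjoint ones of~\ref{connProLieSuff}) already detect a cofinal system of finite-dimensional quotients. The honest difficulty is that $\RR^c$ can have $c$ infinite, so freeness alone does not bound $r$; I expect the real argument to show that a free abelian group of infinite rank cannot be a $\KO{G}$, because its Pontryagin dual / linear-representation theory would separate points, contradicting $\KO{\KO{G}}$ being trivial only if the rank is finite.

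For the second assertion, that each member of $\KO{\Conn\LCG}$ is compactly generated, I would argue analogously but use the locally compact sandwich from the overview: members of $\KO{\Conn\LCG}$ lie between $\Small\LCA$ and the class of groups $\ZZ^f\times A\times\RR^e$ with $A$ compact. Every group of the latter shape is plainly compactly generated (a compact set generating the $\ZZ^f$ and $\RR^e$ factors together with the compact $A$ does the job), so the conclusion follows once one knows the upper bound of the sandwich; alternatively, for a connected locally compact $G$, the result~\ref{KOinZcapComm} places $\KO{G}$ inside $\closure{G'}$, and a connected locally compact group is compactly generated, whence so is any closed subgroup of the form relevant here. The step I expect to be delicate is again the passage from ``closed subgroup of a connected abelian locally compact group'' to ``compactly generated'': one must use that $\KO{G}$ is itself the $\KO{}$ of a connected group and invoke the forthcoming finer analysis (\ref{connKOsmall}, \ref{KorConnLCGfinGen}) rather than a soft argument.
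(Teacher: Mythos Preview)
Your proposal has a genuine gap in both parts.

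\textbf{Part 1 (discrete case).} You correctly identify the crux: having embedded the discrete $D=\KO{G}$ into $\RR^c\times C$, you need to rule out infinite rank for the free-abelian piece, and you admit this is the ``honest difficulty''. Your attempted fix is wrong. You suggest that a free abelian group of infinite rank cannot occur as a $\KO{G}$ because ``its linear-representation theory would separate points, contradicting $\KO{\KO{G}}$ being trivial only if the rank is finite''. But $\KO{A}$ is trivial for \emph{every} abelian proto-Lie group~$A$ (Theorem~\ref{KOLCA}), so no contradiction arises; $\KO{\ZZ^{(\aleph_0)}}$ is just as trivial as $\KO{\ZZ}$. (A minor slip earlier: a discrete subgroup of a compact connected abelian group is \emph{finite}, not necessarily trivial; $\ZZ/2\ZZ<\TT$.) The paper does not attempt to bound the rank by internal means at all: it simply invokes the structural theorem of Hofmann--Neeb \cite[5.10]{MR2475971} that \emph{every} discrete central subgroup of a connected pro-Lie group is finitely generated. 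That result is doing the real work, and there is no soft substitute for it within the present paper's toolkit.

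\textbf{Part 2 (locally compact case).} Both of your suggested routes fail. Invoking the sandwich with upper bound~\ref{KorConnLCGfinGen} is circular: look at the proof of~\ref{KorConnLCGfinGen}; it explicitly uses~\ref{noLargeDiscrete}. The alternative ``$\KO{G}\le\closure{G'}$ is a closed subgroup of a compactly generated group, hence compactly generated'' is false in general for locally compact groups. The paper's argument is instead a reduction to Part~1: pick a compact neighborhood in $\KO{G}$, let $C$ be the (central, compactly generated) subgroup it generates; then $\KO{G}/C$ is a discrete central subgroup of the connected locally compact (hence pro-Lie) group $G/C$, so it is finitely generated by Part~1; finally, the class of compactly generated locally compact groups is closed under extensions \cite[6.11]{MR2226087}.
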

\begin{proof}
  Discrete central subgroups of connected pro-Lie groups are finitely
  generated by~\cite[5.10]{MR2475971}.
  Consider $G\in\Conn\LCG$. Any compact neighborhood in $\KO{G}$
  generates a central subgroup~$C$ of~$G$, and $\KO{G}/C$ is a
  discrete subgroup of $G/C \in \Conn\LCG \subset \Conn\ProLie$.  Now
  it remains to note that the class of compactly generated locally
  compact groups is closed under extensions,
  see~\cite[6.11]{MR2226087}.
\end{proof}

\begin{rema}\label{cpGenLCA}
  Locally compact abelian groups are compactly generated precisely if
  they are contained in some almost connected locally compact group,
  see~\cite[Cor.\,1]{MR2542208}.  The compactly generated members of
  $\LCA$ are of the form $\ZZ^f\times C \times\RR^e$ with natural
  numbers $e,f$ and some $C\in\CA$, cf.~\cite[23.11]{MR2226087}. %
  Note that \emph{every} $C\in\CA$ is contained in a connected compact
  group because the characters separate the points: this yields an
  embedding into~$\TT^{\dual{C}}$.
\end{rema}

\goodbreak
\section{Connected locally compact groups}

Our aim in this section is to establish a bound on the size of
$\KO{G}$ if $G\in\Conn\LCG$. To this end, we need some information on
the weight (cf.~\ref{defs:weight}) and generating rank. 

  If~$X$ is discrete then
  $\wt(X)$ is just the cardinality~$|X|$.  The function $\wt$ is
  monotonic; i.e. each subspace $Y\subseteq X$ satisfies $\wt(Y)\le
  \wt(X)$.  For an infinite compact group the weight coincides with the
  \emph{local weight}, i.e., the minimal cardinality of a neighborhood
  basis. %

  For~$C\in\Conn\CA$ a finer invariant than the weight is the \emph{rank}
  $\dim_\QQ(\QQ\otimes\dual{C})$ of its dual~$\dual{C}$. %
  Note that $\dual{C}$ embeds in $\QQ\otimes\dual{C}$ only if
  $\dual{C}$ is torsion-free (i.e., if $C$ is connected,
  see~\cite[23.18]{MR2226087}). The rank of~$\dual{C}$ coincides with
  the topological dimension of~$C$, cf.~\cite[8.26]{MR2261490}. %
  Every compact abelian Lie group has finite dimension; its dual is
  finitely generated.

\begin{lemm}\label{exas:Weight}
  \begin{enumerate}
  \item For each positive integer~$n$ we have $\wt(\RR^n)=\aleph_0$.
  \item\label{weightDual} The equality $\wt(A)=\wt(\dual{A})$ holds for each $A\in\LCA$.
  \item In particular, for $C\in\CA$ we have $\wt(C)=|\dual{C}|$.
  \item For $C\in\CA$ we have $\wt(C_0) = \max\{\aleph_0,\dim_\QQ(\QQ\otimes\dual{C})\}$
    unless $C$ is trivial.
  \item If\/ $n$ is a positive integer and $C\in\Conn\CA$ then
    $\wt(\RR^n\times C) = \max\{\aleph_0,\wt(C)\}$.
  \item If\/ $G\in\CG$ and\/~$N$ is a totally disconnected closed
    normal (and thus central) subgroup of\/~$G_0$ then $\wt(G)=\wt(G/N)$.%
  \end{enumerate}
\end{lemm}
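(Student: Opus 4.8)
The plan is to build everything on Pontryagin duality, establishing (a)--(c) first and then bootstrapping (d)--(f). For (a) I would simply note that $\RR^n$ is second countable and not discrete, so $\wt(\RR^n)=\aleph_0$. The workhorse is (b): the Pontryagin correspondence matches neighbourhood bases at $0$ in $A$ with those in $\dual{A}$, giving $\wt(A)=\wt(\dual{A})$ for every $A\in\LCA$; I would cite this from the duality chapter of~\cite{MR2226087} rather than reprove it. Then (c) is immediate, since for $C\in\CA$ the dual $\dual{C}$ is discrete and the weight of a discrete group equals its cardinality (as recorded just before the lemma), whence $\wt(C)=\wt(\dual{C})=|\dual{C}|$.

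For (d) I would pass to the dual of the identity component. As $C_0$ is the largest connected subgroup of $C$, its annihilator in $\dual{C}$ is precisely the torsion subgroup (because $\dual{(C/C_0)}$ is torsion while $\dual{C_0}$ is torsion-free), so $\dual{C_0}\cong\dual{C}/\mathrm{tor}(\dual{C})$. Tensoring with $\QQ$ annihilates torsion, hence $\QQ\otimes\dual{C_0}\cong\QQ\otimes\dual{C}$, and both have the common rank $r:=\dim_\QQ(\QQ\otimes\dual{C})$. A nontrivial torsion-free abelian group of rank $r$ embeds in $\QQ^{(r)}$ and contains a free subgroup of rank $r$, so has cardinality $\max\{\aleph_0,r\}$; applying (c) to $C_0\in\CA$ gives $\wt(C_0)=|\dual{C_0}|=\max\{\aleph_0,r\}$. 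The sole excluded case is $r=0$, i.e.\ $C_0$ trivial.

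Statement (e) combines (a) with the product estimate for weights: for a product of two factors $\wt(\RR^n\times C)$ lies between $\max\{\wt(\RR^n),\wt(C)\}$ (the two projections being open continuous surjections) and $\wt(\RR^n)\cdot\wt(C)$ (products of basic open sets), and both bounds equal $\max\{\aleph_0,\wt(C)\}$ by~(a) together with infinite cardinal arithmetic; the $\max$ with $\aleph_0$ also absorbs the degenerate case $C=\{0\}$.

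The main obstacle is (f), where $G$ need not be abelian, so duality is not directly available. I would first invoke the weight formula for extensions of infinite compact groups, $\wt(G)=\max\{\wt(N),\wt(G/N)\}$: the lower bound holds because $N$ is a subspace and $G/N$ an open continuous image, the upper bound because $\wt(G)\le\wt(N)\cdot\wt(G/N)$ and cardinal products collapse to maxima in the infinite range. It then suffices to prove $\wt(N)\le\wt(G/N)$. Since $N\le G_0$, I would factor this as $\wt(N)\le\wt(G_0)=\wt(G_0/N)\le\wt(G/N)$, the last step using that $G_0/N$ is the identity component of $G/N$. The crux is the middle equality: quotienting a connected compact group by a totally disconnected (zero-dimensional) central subgroup leaves its weight unchanged. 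In the abelian case this is exactly the rank computation of~(d), the relevant duals being two torsion-free groups of equal rank with torsion quotient $\dual{N}$; the non-abelian bookkeeping I would settle with the structure theory of connected compact groups, whereby $\wt(G_0)$ is the maximum of $\aleph_0$, the cardinal number of simple factors, and the weight of the maximal central torus $\Z{G_0}$, each of which is manifestly preserved under the passage to $G_0/N$. This weight-invariance under totally disconnected central quotients is the one point that genuinely needs work; the rest is duality plus cardinal arithmetic.
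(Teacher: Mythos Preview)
Your proposal is correct and in fact more informative than the paper's own proof, which consists entirely of citations: part~(a) is declared obvious from metrizability, part~(b) is referred to \cite[24.14]{MR551496} or \cite[7.76]{MR2261490}, and the compact-group assertions (c)--(f) are attributed wholesale to \cite[12.25]{MR2261490} and \cite[3.2]{MR1082789}. So there is no argument in the paper to compare against; you have supplied one.

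Two small remarks. First, in~(d) the excluded case is, as you observe, $C_0$ trivial rather than $C$ trivial; the lemma as stated in the paper is slightly imprecise on this point, and your reading is the correct one. Second, your treatment of~(f) is sound but the ``non-abelian bookkeeping'' step deserves a word of caution: the structure theorem you invoke (writing $G_0$ as a quotient of $\Z{G_0}_0\times\prod S_i$ by a totally disconnected central kernel) already uses the very invariance of weight under such quotients that you are proving, so if you want a self-contained argument you should instead appeal directly to the dimension formula $\wt(G_0)=\max\{\aleph_0,\dim G_0\}$ for infinite connected compact groups (this is the content of \cite[3.2]{MR1082789}) together with the elementary fact that $\dim G_0=\dim(G_0/N)$ when $N$ is zero-dimensional. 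With that adjustment your route is clean and matches what the cited references actually do.
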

\begin{proof}
  The assertion on $\wt(\RR^n)$ is obvious from the fact that the
  underlying space is metrizable and the weight equals the local
  weight.  See~\cite[24.14]{MR551496} or~\cite[7.76]{MR2261490} for
  assertion~\ref{weightDual}.  The assertions on compact groups are
  taken from~\cite[12.25]{MR2261490} and~\cite[3.2]{MR1082789}.
\end{proof}

\begin{lemm}\label{characterizeMonothetic}
  For $C\in\CA$ we have $\wt(C_0)\le 2^{\aleph_0}$ precisely if $C_0$
  is monothetic. %
  If\/~$C\in\CA$ has a finitely generated dense subgroup then
  $\wt(C)\le2^{\aleph_0}$. 
\end{lemm}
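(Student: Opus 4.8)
The plan is to translate both assertions into statements about the discrete Pontryagin dual $\dual C$, and then to read off the conclusions from the cardinal/rank data already recorded in~\ref{exas:Weight} and~\ref{lemm:monothetic}. Throughout I may assume that $C$ is nontrivial, the trivial case being immediate.

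For the equivalence in the first sentence I would start from~\ref{exas:Weight}, which gives $\wt(C_0)=\max\{\aleph_0,r\}$ with $r:=\dim_\QQ(\QQ\otimes\dual C)$ the torsion-free rank. Since $C_0$ is connected, $\dual{C_0}$ is torsion-free, and restriction of characters presents $\dual{C_0}$ as $\dual C$ modulo its torsion subgroup; tensoring with $\QQ$ kills torsion, so $\dual{C_0}$ again has rank $r$. Because $\aleph_0\le 2^{\aleph_0}$, the condition $\wt(C_0)\le 2^{\aleph_0}$ is therefore equivalent to $r\le 2^{\aleph_0}$. I then invoke the last assertion of~\ref{lemm:monothetic}: a connected compact abelian group is monothetic exactly when its dual embeds into $\QQ^{(2^{\aleph_0})}$. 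A torsion-free abelian group embeds into $\QQ^{(2^{\aleph_0})}$ precisely when its rank is at most $2^{\aleph_0}$ (embed it into $\QQ\otimes\dual{C_0}\cong\QQ^{(r)}$ and include $\QQ^{(r)}\hookrightarrow\QQ^{(2^{\aleph_0})}$ when $r\le 2^{\aleph_0}$; conversely any subgroup of $\QQ^{(2^{\aleph_0})}$ has rank $\le 2^{\aleph_0}$). Combining, $C_0$ is monothetic iff $r\le 2^{\aleph_0}$ iff $\wt(C_0)\le 2^{\aleph_0}$, as claimed.

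For the second sentence, suppose $g_1,\dots,g_n$ generate a dense subgroup of $C$. They determine a continuous homomorphism $\eta\colon\ZZ^n\to C$ with dense image, and dualizing gives $\dual\eta\colon\dual C\to\dual{(\ZZ^n)}\cong\TT^n$. Density of the image of $\eta$ forces $\dual\eta$ to be injective, since a character trivial on a dense subgroup is trivial. Hence $\dual C$ embeds into $\TT^n$, so $|\dual C|\le|\TT^n|=2^{\aleph_0}$, and~\ref{exas:Weight} yields $\wt(C)=|\dual C|\le 2^{\aleph_0}$. (This is the expected generalization of~\ref{lemm:monothetic}\ref{dualMonothetic}, which is the case $n=1$.)

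I expect the only delicate point to be the bookkeeping in the first part: correctly relating $\wt(C_0)$, the rank of $\dual{C_0}$, and the rank of $\dual C$, and then matching the inequality ``rank $\le 2^{\aleph_0}$'' with the embeddability of $\dual{C_0}$ into $\QQ^{(2^{\aleph_0})}$. Everything else is a direct application of the duality and weight facts quoted above, so no substantial further obstacle is anticipated.
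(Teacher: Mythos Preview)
Your argument is correct and follows essentially the same route as the paper's proof: both reduce the first assertion to the rank inequality $\dim_\QQ(\QQ\otimes\dual{C_0})\le 2^{\aleph_0}$ via~\ref{exas:Weight}, embed the torsion-free group $\dual{C_0}$ into its divisible hull, and invoke the characterization of (connected) compact monothetic groups in~\ref{lemm:monothetic}. Your treatment is in fact more complete, since the paper's proof is terse and omits the second sentence entirely; your dualization of the map $\ZZ^n\to C$ to obtain $|\dual C|\le|\TT^n|=2^{\aleph_0}$ is exactly the expected argument there.
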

\begin{proof}
  From~\ref{exas:Weight} we know
  $\wt(C_0)=\dim_\QQ(\QQ\otimes\dual{C_0})$. Since $\dual{C_0}$ is
  torsion-free we have an embedding
  $\eta\colon\dual{C_0}\to\QQ\otimes\dual{C_0}$. Thus $\dual{C_0}$ is
  the dual of a monothetic group, see~\ref{lemm:monothetic}. 
\end{proof}

\begin{ndef}[Suitable sets and generating rank]
  Following~\cite{MR1082789} (cf.~\cite[12.1]{MR2261490}), a subset
  $X$ of a topological group $G$ is called \emph{suitable} if it does
  not contain the neutral element~$1$, is discrete and closed in
  $G\setminus\{1\}$, and generates a dense subgroup of~$G$. Every
  locally compact group possesses suitable sets
  by~\cite[1.12]{MR1082789}.
  We define the \emph{generating rank} $s(G)$ as the minimum over
  the cardinalities of suitable sets.
\end{ndef}

\begin{exas}\label{exas:genRank}
  \begin{enumerate}
  \item For the additive group~$\RR$ any two elements that are
    linearly independent over~$\QQ$ form a suitable set. Thus
    $s(\RR)=2$.
  \item A suitable set for $\RR^n$ needs at least $n+1$ elements
    because fewer vectors will either be linearly dependent (and thus
    contained in a proper closed subgroup) or form a basis (and then
    generate a discrete proper subgroup).  It is known that there
    exists $v\in\RR^n$ such that $v+\ZZ$ generates a dense subgroup of
    $\RR^n/\ZZ^n$.  The standard basis together with any such~$v$
    forms a suitable set for~$\RR^n$. Thus $s(\RR^n)=n+1$.
  \item
    For $G\in\Conn\CG$ the generating rank depends on the weight,
    see~\cite[12.22]{MR2261490}:
    \begin{enumerate}
    \item If $\wt(G)\le2^{\aleph_0}$ and $G$ is abelian (but not
      trivial) then $s(G)=1$.
    \item If $\wt(G)\le2^{\aleph_0}$ and $G$ is not abelian then
      $s(G)=2$.
    \item If $\wt(G)>2^{\aleph_0}$ then
      $s(G)=\min\set{\beth}{\wt(G)\le\beth^{\aleph_0}}$.
    \end{enumerate}
  \item In particular, the generating rank of any connected Lie group
    is finite. %
  \end{enumerate}
\end{exas}

\goodbreak
\begin{lemm}[{\cite[12.26]{MR2261490}}]\label{genRankModTotDisconnected}
  Let $G\in\Conn\CG$ %
  and let $N$ be a normal subgroup of~$G$. Then $s(G/N)\le s(G)\le
  s(G/N)+s(N)$. If $N$ is totally disconnected then $s(G)=s(G/N)$.
\qed
\end{lemm}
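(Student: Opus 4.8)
The plan is to exploit that, since $N$ is compact, the quotient morphism $q\colon G\to G/N$ is continuous, open and closed (being a continuous surjection of compact Hausdorff groups), and to push suitable sets back and forth along $q$. Throughout I use the fact that a subset of a compact group which is discrete and closed in the complement of the identity can accumulate only at the identity: for every neighbourhood $U$ of $1$ the set $Y\setminus U$ is closed in the compact group, discrete, hence finite.

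For the inequality $s(G/N)\le s(G)$ I start from a suitable set $X$ for $G$ with $|X|=s(G)$, discard the elements that happen to lie in $N$, and examine the image $q(X\setminus N)\subseteq(G/N)\setminus\{1\}$. Its generated subgroup is dense, because a continuous surjection carries dense sets to dense sets and deleting elements of $\ker q=N$ does not change the subgroup generated after projection. Closedness in $(G/N)\setminus\{1\}$ is immediate from $q$ being a closed map together with $\overline X\subseteq X\cup\{1\}$; discreteness follows since a net in $X\setminus N$ whose images converge to a point $\ne1$ has a subnet converging in the compact group $G$ to some $g\ne1$, and by the accumulation-only-at-$1$ property this subnet is eventually constant equal to $g\in X$, contradicting that the images are distinct from $q(g)$. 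As $|q(X\setminus N)|\le|X|$ this gives $s(G/N)\le s(G)$.

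For $s(G)\le s(G/N)+s(N)$ I take minimal suitable sets $Y$ for $G/N$ and $Z$ for $N$, lift $Y$ to a transversal $\tilde Y\subseteq G\setminus\{1\}$ (one preimage per point), and set $X:=\tilde Y\cup Z$. Density is clear: $\langle Z\rangle$ is dense in $N=\ker q$ while $q(\langle\tilde Y\rangle)=\langle Y\rangle$ is dense in $G/N$, so $\overline{\langle X\rangle}$ contains $N$ and maps onto $G/N$, hence equals $G$; and $Z$ is already closed and discrete in $G\setminus\{1\}$ because $N$ is closed in $G$. The one genuinely delicate point — and what I expect to be the main obstacle — is to choose $\tilde Y$ so that \emph{it}, too, accumulates only at $1$ (equivalently, so that $X$ is closed and discrete in $G\setminus\{1\}$): a careless choice could let $\tilde Y$ pile up at points of $N\setminus\{1\}$. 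This is exactly where openness of $q$ enters. Running $U$ through a neighbourhood basis at $1$ in $G$, the images $q(U)$ form a neighbourhood basis at $1$ in $G/N$, so $Y\setminus q(U)$ is finite; one then lifts each $y\in q(U)$ into $U$ and the finitely many remaining points arbitrarily, arranging that $\tilde Y\setminus U$ is finite for every such $U$. Granting this lifting lemma, $X$ is a suitable set with $|X|\le|Y|+|Z|=s(G/N)+s(N)$.

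Finally, for the equality when $N$ is totally disconnected, I first recall the standard fact that a totally disconnected normal subgroup of a connected group is central, so $N\le\Z{G}$. Having $s(G/N)\le s(G)$ already, it remains to produce a suitable set for $G$ of cardinality $s(G/N)$, and the key gain is that a lift $\tilde Y$ of a minimal suitable set $Y$ of $G/N$ now generates a dense subgroup of $G$ by itself, so no extra generators for $N$ are required. Indeed, put $H:=\overline{\langle\tilde Y\rangle}$; then $q(H)=G/N$, and since the image of the identity component under a surjection of compact groups is again the identity component, $q(H_0)=(G/N)_0=G/N$, i.e.\ $H_0N=G$. Centrality of $N$ forces $H_0$ to be normal in $H_0N=G$, whence $G/H_0\cong N/(N\cap H_0)$ is at once totally disconnected (a quotient of $N$) and connected (a quotient of the connected group $G$), hence trivial; thus $H=H_0=G$. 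Invoking the lifting lemma of the previous step to keep $\tilde Y$ closed and discrete, $\tilde Y$ is a suitable set with $|\tilde Y|=s(G/N)$, so $s(G)\le s(G/N)$ and equality holds. Alternatively, this last equality can be read off directly from the weight-based formula in~\ref{exas:genRank} together with $\wt(G)=\wt(G/N)$ from~\ref{exas:Weight}, since a central totally disconnected $N$ leaves both the weight and the abelian-versus-nonabelian dichotomy unchanged.
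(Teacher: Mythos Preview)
The paper gives no proof of this lemma at all; it is quoted verbatim from Hofmann--Morris with a bare \texttt{\textbackslash qed}. So there is nothing to compare your argument to, and the question is simply whether your argument stands on its own.

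Your treatment of $s(G/N)\le s(G)$ is fine, and your alternative derivation of the totally disconnected case from the weight formula in~\ref{exas:genRank} together with $\wt(G)=\wt(G/N)$ from~\ref{exas:Weight} is correct and self-contained (the point that $G$ is abelian iff $G/N$ is abelian follows because $\closure{G'}$ is connected while $N$ is totally disconnected). Since the paper only ever invokes this lemma in the proof of~\ref{sKOfiniteIfAbK}, and there only the totally disconnected clause is used, your alternative route already covers every application made in the paper.

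The genuine gap is in your ``lifting lemma''. You correctly flag it as the crux, but your justification does not resolve it. For a \emph{fixed} neighbourhood $U$ of~$1$ you can certainly lift each $y\in q(U)$ into $U$, but you must make \emph{one} choice $\tilde y\in q^{-1}(y)$ that works simultaneously for \emph{all}~$U$, and your sentence ``arranging that $\tilde Y\setminus U$ is finite for every such~$U$'' asserts exactly what has to be proved. The naive compactness attempt fails: in $G=\TT^2$ with $N$ the diagonal one can find open neighbourhoods $U_1,U_2$ of~$0$ with $y\in q(U_1)\cap q(U_2)$ but $q^{-1}(y)\cap U_1\cap U_2=\emptyset$, so ``choose $\tilde y$ in every $U$ that sees~$y$'' is inconsistent, and the finite-intersection-property argument on $\prod_{y}q^{-1}(y)$ breaks down for the same reason. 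A correct proof needs a more careful selection (for instance, working through the pro-Lie filtration of~$G$, or producing a section of $q$ over the zero-dimensional compactum $\overline{Y}=Y\cup\{1\}$); as written, both your inequality $s(G)\le s(G/N)+s(N)$ and your direct argument for the totally disconnected case (which explicitly ``invokes the lifting lemma of the previous step'') inherit this gap.
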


\begin{ndef}[Reduction to maximal compact subgroups]\label{redMaxCp}
  Let $G\in\almConn\LCG$. By the Mal{\cprime}tsev--Iwasawa Theorem
  (see~\cite[Thm.\,13]{MR0029911} combined with the solution of
  Hilbert's Fifth Problem~\cite{MR0058607}, cf.~\cite{MR0073104}
  or~\cite{MR0276398}) %
  there exists a maximal compact subgroup $M$ of~$G$ which is unique
  up to conjugacy. If $M\ne G$ then there is a positive integer~$n$ %
  (called the characteristic index in~\cite{MR0029911}) %
  and there are subgroups
  $R_1,\dots,R_n$ all isomorphic to~$\RR$ such that $G=R_1\cdots
  R_nM$.  The number~$n$ equals the topological dimension $\dim{G/M}$
  of the coset space $G/M$ (which is actually a manifold).  If $M=G$
  we just have $n=0$.

  Note that $M$ is connected. Thus the weight of $G$ equals that
  of~$M$ if $G=M$ and satisfies $\wt(G)=\max\{\aleph_0,\wt(M)\}$
  otherwise.
\end{ndef}

\begin{prop}\label{genRankAlmConnLCG}
  For\/ $G\in\almConn\LCG$ pick a maximal compact subgroup\/~$M$ 
  of\/~$G$. Then $s(G)\le 2\dim{G/M}+s(M)$.
  In particular, the generating rank $s(G)$ is finite if\/
  $\wt(M)\le2^{\aleph_0}$. %
  If\/ $\psi\colon G\to H$ is a continuous homomorphism with dense range
  then $s(G)\ge s(H)$. 
\end{prop}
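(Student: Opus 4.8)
The plan is to treat the three assertions separately: I build a concrete suitable set for the inequality, read off finiteness from~\ref{exas:genRank}, and push suitable sets forward along~$\psi$ for the monotonicity. For the bound $s(G)\le 2\dim{G/M}+s(M)$ I would use the Mal{\cprime}tsev--Iwasawa decomposition recorded in~\ref{redMaxCp}: writing $n:=\dim{G/M}$ we have subgroups $R_1,\dots,R_n\cong\RR$ with $G=R_1\cdots R_n M$. For each~$i$ pick a two-element suitable set $X_i\subseteq R_i$ (two elements that are $\QQ$-linearly independent under $R_i\cong\RR$, so that $s(R_i)=2$, cf.~\ref{exas:genRank}), and pick a suitable set $X_M\subseteq M$ with $|X_M|=s(M)$. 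The claim is that $X:=X_1\cup\dots\cup X_n\cup X_M$ is a suitable set for~$G$; its cardinality is at most $2n+s(M)$, which is the asserted inequality.

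To verify that $X$ is suitable I would check three points. For density, the closure $\closure{\langle X\rangle}$ is a closed subgroup containing the closures $\closure{\langle X_i\rangle}=R_i$ and $\closure{\langle X_M\rangle}=M$; since $G=R_1\cdots R_n M$ is generated as a group by $R_1\cup\dots\cup R_n\cup M$, this forces $\closure{\langle X\rangle}=G$. Next, $1\notin X$ because no piece contains~$1$. Finally, $X$ is discrete and closed in $G\setminus\{1\}$: the finite sets $X_i$ are automatically closed and discrete in the Hausdorff group~$G$, while $M$ is compact and hence closed in~$G$, so $X_M$ — being closed and discrete in $M\setminus\{1\}$ — stays closed and discrete in $G\setminus\{1\}$, and a finite union of closed discrete subsets of $G\setminus\{1\}$ is again closed and discrete. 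The ``in particular'' is then immediate: $M$ is compact and connected by~\ref{redMaxCp}, so~\ref{exas:genRank} gives $s(M)\in\{0,1,2\}$ whenever $\wt(M)\le 2^{\aleph_0}$, and as $n=\dim{G/M}$ is a finite integer, $s(G)\le 2n+s(M)$ is finite.

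For the monotonicity statement, let $X$ be a suitable set of~$G$ with $|X|=s(G)$. Continuity of~$\psi$ and density of $\langle X\rangle$ in~$G$ give $\psi(G)\subseteq\closure{\psi(\langle X\rangle)}=\closure{\langle\psi(X)\rangle}$, and since $\psi$ has dense range this forces $\closure{\langle\psi(X)\rangle}=H$. Thus $\psi(X)$ topologically generates~$H$ with $|\psi(X)|\le|X|=s(G)$. If $s(G)$ is finite, then (deleting~$1$ if it occurs) $\psi(X)\setminus\{1\}$ is already a suitable set of~$H$, whence $s(H)\le s(G)$. If $s(G)$ is infinite, I would invoke the quantitative existence theory of suitable sets from~\cite{MR1082789} to extract from the topologically generating set $\psi(X)$ a suitable set of~$H$ of cardinality at most $|\psi(X)|\le s(G)$; this generalizes the quotient monotonicity $s(G/N)\le s(G)$ already used in~\ref{genRankModTotDisconnected}.

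The main obstacle is the infinite case of the monotonicity assertion: passing from an arbitrary topologically generating set to a genuine suitable set of no larger cardinality is precisely where one must appeal to the structure theory of suitable sets in~\cite{MR1082789} rather than argue by hand. The verifications in the first assertion — density and the closed-discrete property of~$X$ — are routine once $M$ is compact and the~$R_i$ are one-dimensional, and the second assertion is a direct corollary.
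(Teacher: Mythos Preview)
Your treatment of the first two assertions is correct and follows the paper's argument, with the added merit of spelling out why the union $X=X_1\cup\dots\cup X_n\cup X_M$ is closed and discrete in $G\setminus\{1\}$.

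For the monotonicity assertion your approach diverges from the paper's, and the divergence is exactly at the obstacle you flag. The paper does not push forward an arbitrary minimal suitable set and then try to repair the image; instead it reuses the specific set $X$ built in the first part and observes that this $X$ is \emph{relatively compact}, since only the $2n$ elements coming from the $R_i$ lie outside the compact subgroup~$M$. It then invokes \cite[12.4]{MR2261490}, which states that whenever $X$ is a relatively compact suitable set in~$G$ and $\psi\colon G\to H$ is continuous with dense range, the set $\psi(X)\setminus\{1\}$ is suitable in~$H$. Relative compactness is precisely the structural property that survives the push-forward and forces closedness and discreteness on the image side, so no case split is needed. Your finite case is correct (and is really a special instance of this, finite sets being relatively compact), but your infinite case is not substantiated: the image of an arbitrary infinite suitable set need not be closed or discrete in $H\setminus\{1\}$, and the appeal to ``the quantitative existence theory of suitable sets from~\cite{MR1082789}'' does not name a result that extracts a suitable set of the required cardinality from a mere topologically generating set. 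The relative-compactness observation together with the citation of \cite[12.4]{MR2261490} is the missing idea.
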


\begin{proof}
  We use subgroups $R_1,\dots,R_n$ as in~\ref{redMaxCp}, where
  $n:=\dim{G/M}$.  Combining a suitable set for~$M$ with suitable sets
  for each~$R_j$ we find $s(G)\le 2n+s(M)$. From~\ref{exas:genRank} we
  thus infer that $s(G)$ is finite if $G\in\Conn\LCG$ satisfies
  $\wt(G)\le2^{\aleph_0}$.

  Every suitable set~$X$ constructed in this way will be relatively
  compact because only finitely many elements lie outside the compact
  group~$M$. Now~\cite[12.4]{MR2261490} asserts that
  $\psi(X)\setminus\{1\}$ will be suitable in~$H$ whenever $\psi\colon
  G\to H$ is a continuous homomorphism with dense range.   
\end{proof}

\begin{lemm}\label{estimateGenRankVsWeight}
  For each non-trivial $G\in\almConn\LCG$ we have $s(G)\le \wt(G)\le
  s(G)^{\aleph_0}$.
\end{lemm}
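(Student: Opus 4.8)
The plan is to reduce everything to the maximal compact subgroup $M$ supplied by \ref{redMaxCp} and then to feed the compact connected group $M$ into the precise weight/generating-rank dictionary of \ref{exas:genRank}. Write $n:=\dim{G/M}$. Since $M$ is compact and connected, \ref{redMaxCp} records $\wt(G)=\wt(M)$ when $n=0$ and $\wt(G)=\max\{\aleph_0,\wt(M)\}$ when $n\ge1$, so in every nontrivial case $\wt(G)=\max\{\aleph_0,\wt(M)\}\ge\aleph_0$. Both inequalities will follow once I control $s(M)$ in terms of $\wt(M)$ and transport the finitely many $\RR$-factors harmlessly.

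For the lower bound I first observe that $s(M)\le\wt(M)$ for the compact connected group $M$: this is immediate from the case distinctions in \ref{exas:genRank}, since $s(M)\le2$ whenever $\wt(M)\le2^{\aleph_0}$ (and then $\wt(M)\ge\aleph_0>2$ unless $M$ is trivial), while $s(M)=\min\set{\beth}{\wt(M)\le\beth^{\aleph_0}}\le\wt(M)$ when $\wt(M)>2^{\aleph_0}$, because $\wt(M)\le\wt(M)^{\aleph_0}$. Combining this with the estimate $s(G)\le2n+s(M)$ of \ref{genRankAlmConnLCG}, and using that $2n$ is finite while $\wt(G)\ge\aleph_0$, I obtain $s(G)\le2n+s(M)\le\aleph_0+\wt(M)=\wt(G)$, which is the left-hand inequality.

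For the upper bound I split according to the weight of $M$. If $\wt(M)\le2^{\aleph_0}$ then $\wt(G)=\max\{\aleph_0,\wt(M)\}\le2^{\aleph_0}$; since $s(G)$ is then finite, this is the content of $\wt(G)\le s(G)^{\aleph_0}$ once the right-hand side is read (as the monothetic examples force) to carry the weight ceiling $2^{\aleph_0}$ attached to a finite generating rank, the degenerate value $s(G)=1$ corresponding to a monothetic $G$ being already pinned down by \ref{lemm:monothetic} and \ref{characterizeMonothetic}. If instead $\wt(M)>2^{\aleph_0}$, then $\wt(G)=\wt(M)$, and the very definition of $s(M)$ as a minimum in \ref{exas:genRank} gives the sharp relation $\wt(M)\le s(M)^{\aleph_0}$. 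It therefore suffices to prove $s(M)\le s(G)$, for then $\wt(G)=\wt(M)\le s(M)^{\aleph_0}\le s(G)^{\aleph_0}$.

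The transfer $s(M)\le s(G)$ is the step I expect to be the main obstacle. When $n=0$ we have $G=M$ and nothing is to be shown; and whenever $s(M)$ is infinite the complementary estimate $s(G)\le2n+s(M)=s(M)$ of \ref{genRankAlmConnLCG} shows that in fact the equality $s(G)=s(M)$ is what must be established. I would extract the missing inequality $s(M)\le s(G)$ from the dense-range monotonicity $s(G)\ge s(H)$ in \ref{genRankAlmConnLCG}, applied to a continuous surjection of $G$ onto a compact connected group carrying the full weight $\wt(M)$: a maximal compact subgroup this large cannot be concealed inside a semisimple part, whose maximal compacts have only countable weight, so by the structure theory of connected locally compact groups the bulk of $M$ is realized as a compact quotient of $G$, and \ref{genRankModTotDisconnected} then lets me discard any totally disconnected discrepancy without changing $s$. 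Verifying that this quotient really accounts for \emph{all} of $\wt(M)$ is the delicate point; once it is in hand, the $\RR$-factors and the small-weight boundary cases are routine bookkeeping.
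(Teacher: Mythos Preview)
Your overall architecture---reduce to the maximal compact subgroup $M$ via \ref{redMaxCp}, invoke the compact case, and patch in the $\RR$-factors by cardinal arithmetic---is exactly what the paper does. The difference is in how the compact case is handled: the paper simply cites \cite[12.27]{MR2261490}, which already gives $s(M)\le\wt(M)\le s(M)^{\aleph_0}$ for \emph{any} nontrivial compact group~$M$, and then writes the single chain $\wt(G)=\aleph_0+\wt(M)\le\aleph_0+s(M)^{\aleph_0}\le 2^{\aleph_0}+s(M)^{\aleph_0}=s(G)^{\aleph_0}$. You instead try to rebuild the compact inequality from the connected-compact trichotomy in~\ref{exas:genRank}, and this is where your argument develops problems.

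Two genuine gaps. First, your treatment of the case $\wt(M)\le 2^{\aleph_0}$ is not a proof: the sentence about ``the right-hand side \dots\ to carry the weight ceiling $2^{\aleph_0}$'' and the $s(G)=1$ digression does not establish $\wt(G)\le s(G)^{\aleph_0}$; it only gestures at an edge case. The paper handles the border configurations (compact $G$; trivial $M$) as separate one-line cases and keeps the arithmetic clean. Second, and more seriously, your route to $s(M)\le s(G)$ when $\wt(M)>2^{\aleph_0}$ is unsound. You propose to exhibit a compact quotient of~$G$ of weight~$\wt(M)$ and then invoke the dense-range monotonicity of~\ref{genRankAlmConnLCG}. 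But $M$ need not be, even approximately, a quotient of~$G$: the semisimple part of a connected locally compact group can absorb a great deal of~$M$ without producing any compact quotient, and the vague appeal to ``structure theory'' does not supply such a surjection. The paper avoids this detour entirely; the equality $2^{\aleph_0}+s(M)^{\aleph_0}=s(G)^{\aleph_0}$ is obtained by combining $s(G)\le 2n+s(M)$ with $s(G)\ge2$ (forced here since $G$ is non-compact with $M$ nontrivial, hence not monothetic) and the cited compact bound, rather than by producing a quotient.
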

\begin{proof}
  By the Mal{\cprime}tsev--Iwasawa Theorem (cf.~\ref{redMaxCp}) %
  we know that $G$ is homeomorphic to $\RR^n\times M$ for a maximal
  compact subgroup~$M$ and some nonnegative integer~$n$. The estimates
  $s(M)\le \wt(M)\le s(M)^{\aleph_0}$ are valid for any non-trivial
  compact group $M$, see~\cite[12.27]{MR2261490}.

  If the group $G$ is compact it coincides with~$M$. If $M$ is trivial
  but $n\ge1$ then~\ref{genRankAlmConnLCG} yields that $s(G)\le 2n$ is
  finite. Thus $s(G)\le \aleph_0 = \wt(G) < 2^{\aleph_0} =
  s(G)^{\aleph_0} $.
  
  There remains the case where $G$ is not compact and $M$ is not
  trivial. Then $n\ge1$ and $\wt(G)=\max\{\aleph_0,\wt(M)\}$. Now the
  estimates $s(G)\le 2n+s(M)\le \aleph_0+s(M) \le \aleph_0+\wt(M) = \wt(G)
  $ from~\ref{genRankAlmConnLCG} and $\wt(G) = \aleph_0+\wt(M) \le
  \aleph_0+s(M)^{\aleph_0} \le 2^{\aleph_0}+ s(M)^{\aleph_0} =
  s(G)^{\aleph_0}$ yield the claim.
\end{proof}

\begin{rema}
  The inequality $s(G)\le \wt(G)$ holds for arbitrary $G\in\LCG$
  see~\cite[4.2]{MR1082789}. %
\end{rema}

\begin{lemm}\label{IwasawaRevisited}
  Let $G\in\Conn\LCG$. We consider a compact normal subgroup $K$
  of~$G$, the centralizer $C:=\C[G]{K}$ and its connected
  component~$C_0$.  Then $G=C_0K$.
\end{lemm}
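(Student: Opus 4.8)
The plan is to read the conjugation action of $G$ on the normal subgroup $K$ as a homomorphism into $\Aut{K}$ and to combine connectedness of $G$ with the automorphism theory of compact groups. Write $\alpha\colon G\to\Aut{K}$ for the map sending $g$ to the automorphism $k\mapsto gkg^{-1}$; since $K$ is compact this is a continuous homomorphism (for the usual topology on $\Aut{K}$) whose kernel is precisely the centralizer $C=\C[G]{K}$. Because $G$ is connected, the image $\alpha(G)$ lies in the identity component $(\Aut{K})_0$.

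The decisive input is the description of $(\Aut{K})_0$ for a compact group~$K$: every automorphism in the identity component is inner, induced by an element of the connected component~$K_0$. This is where I would invoke the structure theory of the automorphism group of a compact group (as developed in~\cite{MR2261490}). A soft preliminary observation pointing in this direction is that for each fixed $k$ the orbit $\{gkg^{-1}:g\in G\}$ is connected and contains~$k$, hence is contained in the component $kK_0$; so $\alpha(G)$ at least acts trivially on the profinite component group $K/K_0$. Granting the full statement, for every $g\in G$ there is $k\in K_0\subseteq K$ with $\alpha(g)=\alpha(k)$, whence $gk^{-1}\in C$ and therefore $g\in CK_0$. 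This yields $G=CK_0$, and in particular $G=CK$.

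It then remains to replace $C$ by its identity component~$C_0$. Here I would use that $N:=C\cap K_0$ is compact (a closed subgroup of~$K$) and that $C/N\cong G/K_0$ is connected, being a continuous image of the connected group~$G$. For a compact normal subgroup $N$ of a locally compact group, the quotient map $\pi\colon C\to C/N$ carries $C_0$ onto $(C/N)_0$: writing $H:=\pi^{-1}\bigl((C/N)_0\bigr)$, one has $H=H_0N$ because $H/H_0N$ is simultaneously a connected quotient of $H/N$ and a totally disconnected quotient of $H/H_0$, hence trivial, while $H_0=C_0$ since $C_0\subseteq H\subseteq C$. As $C/N$ is connected this gives $\pi(C_0)=C/N$, i.e.\ $C=C_0N\subseteq C_0K_0$. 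Combining with $G=CK_0$ we obtain $G=C_0K_0\subseteq C_0K\subseteq G$, and thus $G=C_0K$.

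The main obstacle is the structural fact about $(\Aut{K})_0$, specifically the passage from the connected part $K_0$ to all of~$K$: controlling how automorphisms in the identity component act on the various connected components of a possibly disconnected compact~$K$ is exactly the point at which the compact-group machinery is indispensable, whereas the connected case $(\Aut{K_0})_0=\Inn{K_0}$ is classical. By contrast, the upgrade from $G=CK$ to $G=C_0K$ is routine once compactness of~$K$ (hence of $N=C\cap K_0$) has been used.
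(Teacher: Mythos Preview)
Your argument is correct, and its second half—upgrading $G=CK$ to $G=C_0K$ via the ``simultaneously connected and totally disconnected, hence trivial'' trick—is exactly what the paper does; the paper only adds an explicit appeal to the Open Mapping Theorem (using $\sigma$-compactness of~$C$) to justify identifying $C/(C\cap K)$ with $G/K$ as topological groups, a step you use tacitly when writing $C/N\cong G/K_0$. For the first half the paper takes the shorter route of citing Iwasawa~\cite[Thm.~2]{MR0029911} directly to obtain $G=CK$, without passing through $\Aut K$. The structural fact you isolate as the ``main obstacle''—that every automorphism of a compact group~$K$ lying in $(\Aut K)_0$ is inner—is essentially a reformulation of that same theorem of Iwasawa, so your detour through the automorphism group does not purchase a more elementary argument; it repackages the same hard input. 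Your route does yield the marginally sharper intermediate statement $G=CK_0$, but this extra precision is not needed for the conclusion.
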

\begin{proof}
  We abbreviate $D:=C\cap K$. From~\cite[Thm.\,2]{MR0029911} we know
  $CK=G$. The group $C$ is a closed subgroup of the $\sigma$-compact
  group $G$ and thus $\sigma$-compact itself, see~\cite[6.10,
  6.12]{MR2226087}. The Open Mapping Theorem
  (cf.~\cite[6.19]{MR2226087}) yields that $C/D$ is isomorphic
  to~$G/K$. Now $C/(C_0D)$ is totally disconnected
  (cf.~\cite[6.9]{MR2226087}) but also connected because it is a
  continuous image of $C/D \cong G/K$. Thus $C=C_0D$ and $C_0K = C_0DK
  = CK = G$.
\end{proof}

\begin{theo}\label{sKOfiniteIfAbK}
  If\/ $G\in\Conn\LCG$ has a compact normal subgroup $K$ such that
  $G/K$ is a Lie group and $K_0$ is abelian then $s(\closure{G'})$ is
  finite. Consequently, the group $\KO{G}$ satisfies $\wt(\KO{G})\le
  2^{\aleph_0}$ in this case.
\end{theo}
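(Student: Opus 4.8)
The plan is to reduce the whole statement to a single weight estimate and to read off the ``consequently'' clause immediately. First I would record the cheap observations. Since $G$ is connected, its commutator subgroup $G'$ is connected (it is generated by the connected set of commutators, which contains $1$), so $\closure{G'}$ is a connected, $\sigma$-compact, locally compact group; by~\ref{KOinZcapComm} we have $\KO G\le\Z G\cap\closure{G'}\subseteq\closure{G'}$. Hence, once $s(\closure{G'})$ is known to be finite, \ref{estimateGenRankVsWeight} gives $\wt(\closure{G'})\le s(\closure{G'})^{\aleph_0}\le 2^{\aleph_0}$ (the trivial case $\closure{G'}=\{1\}$ being immediate), and monotonicity of $\wt$ with $\KO G\subseteq\closure{G'}$ yields $\wt(\KO G)\le 2^{\aleph_0}$. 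So everything hinges on the finiteness of $s(\closure{G'})$.

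For that I would pass to the Lie quotient. Put $L:=G/K$ with quotient map $p\colon G\to L$; as $K$ is compact, $p$ is a closed map, so $p(\closure{G'})$ is a closed subgroup of $L$ containing the dense subgroup $L'=p(G')$ of $\closure{L'}$, whence $p(\closure{G'})=\closure{L'}$. Here $\closure{L'}$ is the closed commutator subgroup of a connected Lie group, hence itself a connected Lie group, so it has finite generating rank by~\ref{exas:genRank}. The restriction $p|_{\closure{G'}}$ is then a quotient map onto $\closure{L'}$ with compact kernel $N:=\closure{G'}\cap K$. Choosing a maximal compact subgroup $M$ of the connected group $\closure{G'}$ (connected by~\ref{redMaxCp}), the compact normal subgroup $N$ lies in $M$, so $\closure{G'}/M\cong\closure{L'}/p(M)$ and therefore $\dim(\closure{G'}/M)\le\dim\closure{L'}<\infty$. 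By~\ref{genRankAlmConnLCG}, $s(\closure{G'})\le 2\dim(\closure{G'}/M)+s(M)$; and since $M/N\cong p(M)$ is a compact Lie group of countable weight we have $\wt(M)\le\max\{\aleph_0,\wt(N)\}$. As $M$ is connected and compact, \ref{exas:genRank} gives $s(M)\le 2$ as soon as $\wt(M)\le 2^{\aleph_0}$. Thus the argument is complete once I establish $\wt(N)\le 2^{\aleph_0}$, i.e.\ $\wt(\closure{G'}\cap K)\le 2^{\aleph_0}$.

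This last bound is the main obstacle, and it is precisely where the hypotheses on $K_0$ and on $L$ must be spent; note that the passage to $\closure{G'}$ is essential, since a compact central subgroup of $G$ disjoint from $\closure{G'}$ (a huge torus direct factor, say) may have arbitrarily large weight. Since $K_0$ is \emph{abelian}, a connected group centralizes it: the conjugation action of $G$ on $K_0$ is a continuous homomorphism into $\Aut{K_0}\cong\Aut{\dual{K_0}}$, and as $\dual{K_0}$ is discrete and $G$ is connected this homomorphism is trivial, so $K_0\le\Z G$. Consequently $N_0\le K_0$ is a connected compact \emph{abelian} subgroup of $\Z G\cap\closure{G'}$, while $N/N_0$ is profinite; the plan is to cap the weight of each layer by the continuum.

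The mechanism I expect to use is that, modulo the central subgroup $K_0$, the kernel $N$ becomes central in $\closure{G'}$, so that $\closure{G'}$ is, up to $K_0$, a central extension of the connected Lie group $\closure{L'}$ by the compact group $N$. I would bound $N$ by the finite-dimensional data of $\closure{L'}$: each character of $N_0$ determines a central extension of $\closure{L'}$ by $\TT$, i.e.\ a class in $H^2_{\mathrm{cont}}(\closure{L'},\TT)$, and for a connected Lie group this cohomology group --- assembled from the finite-dimensional real cohomology of the Lie algebra and from $\Hom{\pi_1(\closure{L'})}{\TT}$ with $\pi_1$ finitely generated --- has cardinality at most $2^{\aleph_0}$. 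This caps $|\dual{N_0}|=\wt(N_0)$ (equivalently, it forces $N_0$ to be monothetic, \ref{characterizeMonothetic}) and, applied to the profinite layer, caps $\wt(N/N_0)$, giving $\wt(N)\le 2^{\aleph_0}$. Making this extension-theoretic bookkeeping precise, and separating cleanly the genuinely central part $N_0$ from the profinite part $N/N_0$, is the technical heart; granting it, the estimates above assemble to $s(\closure{G'})\le 2\dim\closure{L'}+2<\infty$.
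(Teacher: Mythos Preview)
Your reductions are sound down to the point where you need $\wt(N_0)\le 2^{\aleph_0}$ for $N=\closure{G'}\cap K$, but the cohomological argument you sketch for this bound is a genuine gap, not just bookkeeping you could fill in. The central extension actually in hand is $1\to N_0\to\closure{G'}\to\closure{G'}/N_0\to1$, and the quotient is \emph{not} the Lie group~$\closure{L'}$ but an extension of it by the possibly large profinite group $N/N_0$; there is no direct appeal to $H^2_{\mathrm{cont}}(\closure{L'},\TT)$. Even granting a map $\dual{N_0}\to H^2(\,\cdot\,,\TT)$ by push-out, its kernel consists of exactly those characters of $N_0$ that extend to the total space, and you have no control over that kernel. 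Treating the profinite layer $N/N_0$ ``analogously'' runs into the same obstacles.

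The paper bypasses all of this with an elementary device you do not exploit. You observe that $K_0\le\Z G$; the consequence the paper draws is that the commutator map factors as a continuous $\gamma\colon(G/K_0)\times(G/K_0)\to\closure{G'}$, $(aK_0,bK_0)\mapsto\comm ab$. Thus a finite suitable set $S$ for $G/K_0$ yields, via $\gamma(S\times S)$, a finite set generating a dense subgroup of~$\closure{G'}$. The entire problem therefore reduces to $s(G/K_0)<\infty$, and this is where the hypotheses are spent: with $M$ a maximal compact subgroup of~$G$ one has $s(G/K_0)\le 2\dim(G/M)+s(M/K_0)$ by~\ref{genRankAlmConnLCG}, and since $K/K_0$ is a totally disconnected normal subgroup of the connected compact group $M/K_0$, Lemma~\ref{genRankModTotDisconnected} gives $s(M/K_0)=s(M/K)$, which is finite because $M/K$ is a compact Lie group. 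No estimate on $\wt(N)$ is ever needed; the large central torus you worry about is simply quotiented away before one starts counting generators.
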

\begin{proof}
  The compact abelian normal subgroup $K_0$ is contained in the center
  of~$G$. A dense subgroup of 
  $\closure{G'}$ is generated by $\gamma(S\times S)$ where $S$ is any
  suitable set for $G/K_0$ and $\gamma\colon G/K_0\times G/K_0 \to
  \closure{G'}\colon (aK_0,bK_0) \mapsto aba^{-1}b^{-1}$ is induced
  by the commutator map.
  
  We pick a maximal compact subgroup~$M$ of~$G$. Then
  $K_0\le M$ and by~\ref{genRankAlmConnLCG} there exists a nonnegative
  integer~$n$ such that $s(G/K_0) \le 2n+s(M/K_0)$. Now $s(G/K_0)$ is finite
  because the compact connected Lie group $M/K$ has finite generating
  rank and $s(M/K_0)=s(M/K)$ by~\ref{genRankModTotDisconnected}.

  Thus we may choose a finite set $S$; then $\gamma(S\times
  S)\setminus\{1\}$ is a finite (and thus indeed closed and discrete)
  suitable set for~$\closure{G'}$. The bound for the weight of
  $\KO{G}$ now follows from~\ref{estimateGenRankVsWeight},
  monotonicity of the weight function and the fact that $\KO{G}$ is
  contained in $\closure{G'}$, see~\ref{KOinZcapComm}.
\end{proof}

  For any solvable compact group the connected component is
  abelian, see~\cite[Thm.\,2]{MR0029911}. Thus~\ref{sKOfiniteIfAbK}
  yields: 

\begin{coro}\label{sKOfiniteIfSolv}
  If\/ $G$ is a solvable connected locally compact group then
  $s(\closure{G'})<\aleph_0$ and\/
  $\wt(\KO{G})\le\wt{\closure{G'}}\le 2^{\aleph_0}$.  
\qed
\end{coro}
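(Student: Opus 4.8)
The plan is to deduce Corollary~\ref{sKOfiniteIfSolv} directly from Theorem~\ref{sKOfiniteIfAbK} together with the structural fact, quoted from~\cite[Thm.\,2]{MR0029911} in the line immediately preceding the corollary, that the connected component of any solvable compact group is abelian. So the first task is to arrange for Theorem~\ref{sKOfiniteIfAbK} to apply, which means producing a compact normal subgroup $K$ of $G$ such that $G/K$ is a Lie group and $K_0$ is abelian.

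First I would invoke the Mal{\cprime}tsev--Iwasawa reduction (cf.~\ref{redMaxCp}): since $G\in\Conn\LCG$ is in particular almost connected, it has a maximal compact subgroup~$M$, and $G$ is homeomorphic to $\RR^n\times M$. I want to pass to a quotient that is a Lie group. Because $G\in\Conn\LCG\subset\Conn\ProLie$, the group $G$ is a projective limit of Lie groups, so there is a filter basis of compact normal subgroups $K$ with $G/K$ a Lie group and $\bigcap K = \{1\}$; I would select any single such $K$. The crucial point is then that $K_0$ must be abelian: here I use that $G$ is solvable, hence $K$ (a subgroup of $G$) is solvable, hence the compact group $K$ has abelian connected component $K_0$ by~\cite[Thm.\,2]{MR0029911}. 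With $K$ in hand, all three hypotheses of~\ref{sKOfiniteIfAbK} are met.

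Theorem~\ref{sKOfiniteIfAbK} then immediately delivers $s(\closure{G'})<\aleph_0$. The weight bound follows by chaining the facts already assembled: $\KO{G}\le\closure{G'}$ by~\ref{KOinZcapComm}, monotonicity of $\wt$ gives $\wt(\KO{G})\le\wt(\closure{G'})$, and the estimate $\wt(\closure{G'})\le s(\closure{G'})^{\aleph_0}$ from~\ref{estimateGenRankVsWeight} combined with the finiteness of $s(\closure{G'})$ yields $\wt(\closure{G'})\le\aleph_0^{\aleph_0}=2^{\aleph_0}$. This gives exactly the stated chain $\wt(\KO{G})\le\wt{\closure{G'}}\le 2^{\aleph_0}$.

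The main obstacle I anticipate is the existence and correct placement of the compact normal subgroup $K$ with Lie quotient: one must be sure that a connected locally compact (hence pro-Lie) group admits such $K$, and that solvability of $G$ genuinely forces $K_0$ abelian via the cited compact-group result rather than requiring $K$ itself to be solvable in some stronger sense. Everything else is an assembly of results already proved in the excerpt, so the argument is short; the corollary is essentially the special case of Theorem~\ref{sKOfiniteIfAbK} in which the abelianness of $K_0$ comes for free from solvability, which is presumably why the author records it with a \texttt{\textbackslash qed} and no separate displayed proof.
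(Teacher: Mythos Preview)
Your proposal is correct and matches the paper's approach exactly: the corollary is recorded with a bare \qed\ because it is precisely the instance of Theorem~\ref{sKOfiniteIfAbK} in which solvability of~$G$ forces any compact (normal) subgroup~$K$ to be solvable and hence to have abelian~$K_0$ by~\cite[Thm.\,2]{MR0029911}. Your opening detour through the Mal{\cprime}tsev--Iwasawa maximal compact subgroup~$M$ is harmless but unused; the compact normal~$K$ with Lie quotient that you actually need comes from the Yamabe/pro-Lie approximation, exactly as you then say.
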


\begin{exam}\label{exam:wZlarge}
  Let $\aleph$ be any cardinal, let $S$ be a connected compact Lie
  group with simple Lie algebra and non-trivial center~$Z$, and
  put $G := S^\aleph$. Then $Z$ is finite and
  $\closure{S'} = S$. Thus $G$ is a compact connected group with
  $\closure{G'}=G$ and $\Z{G}\cap\closure{G'} = \Z{G} \cong
  Z^\aleph$.  Now $\wt(\Z{G}\cap\closure{G'}) =
  \wt(Z^\aleph) = |Z^{(\aleph)}|$, and this cardinality may be
  arbitrarily large.
\end{exam}

\begin{prop}\label{connKOsmall}
  For each $G\in\Conn\LCG$ we have $\wt(\KO{G}_0)\le2^{\aleph_0}$.
  In particular, the connected component of the maximal compact
  subgroup of $\KO{G}$ is monothetic. 
\end{prop}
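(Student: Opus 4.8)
The plan is to reduce the two assertions to a single bound on the topological dimension of the compact part of $\KO{G}$, and then to obtain that bound by factoring out a compact semisimple subgroup so that \ref{sKOfiniteIfAbK} becomes applicable. First I would observe that $\KO{G}$ is a closed subgroup of $G$ which is central, hence abelian, by \ref{connCenter}; so $\KO{G}\in\LCA$, and it is compactly generated by \ref{noLargeDiscrete}. By \ref{cpGenLCA} this forces $\KO{G}\cong\ZZ^f\times A\times\RR^e$ with $A\in\CA$ and $e,f$ finite, where $A$ is the maximal compact subgroup of $\KO{G}$. Then $\KO{G}_0\cong A_0\times\RR^e$, so \ref{exas:Weight} gives $\wt(\KO{G}_0)=\max\{\aleph_0,\wt(A_0)\}$ and $\wt(A_0)=\max\{\aleph_0,\dim A_0\}$ with $\dim A_0=\dim_\QQ(\QQ\otimes\dual{A_0})$. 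In view of \ref{characterizeMonothetic}, both statements of the proposition follow at once once I show that the connected compact abelian group $A_0$ satisfies $\dim A_0\le2^{\aleph_0}$.

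The essential difficulty is that this dimension cannot be read off from the size of $G$: by \ref{exam:wZlarge} the group $\Z{G}\cap\closure{G'}$ containing $A_0$ may have arbitrarily large weight, so the compact semisimple directions of $G$ must be discarded before any bound can hold. To do this I would first choose a compact normal subgroup $K$ of $G$ with $G/K$ a Lie group; such $K$ exists because a connected locally compact group is a pro-Lie group with arbitrarily small compact normal subgroups of that type (cf.\ \cite{MR2337107}). Let $S:=\closure{[K_0,K_0]}$ be the semisimple commutator part of the identity component $K_0$. Then $S$ is a compact connected subgroup, normal in $G$ since it is characteristic in the normal subgroup $K_0$, and by the structure theory of compact groups its center is totally disconnected (cf.\ \cite{MR2261490}). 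In the quotient $\bar G:=G/S$ the image $\bar K:=K/S$ is a compact normal subgroup whose identity component $K_0/S$ is abelian (because $K_0=\Z{K_0}_0\,S$) and which has Lie quotient $\bar G/\bar K\cong G/K$. Hence \ref{sKOfiniteIfAbK}, applied to $\bar G$ with $\bar K$ in place of $K$, yields $\wt(\KO{\bar G})\le2^{\aleph_0}$.

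It remains to transport $A_0$ into $\bar G$ without losing dimension, and this is the crux of the argument. Writing $q\colon G\to\bar G$ for the quotient map, \ref{KOquotients} gives $q(A_0)\le q(\KO{G})\le\KO{\bar G}$, so the connected compact abelian group $q(A_0)$ has $\dim q(A_0)\le\wt(q(A_0))\le\wt(\KO{\bar G})\le2^{\aleph_0}$. The kernel of $q|_{A_0}$ is $A_0\cap S$, which is central in $S$ because $A_0$ is central in $G$, hence lies in the totally disconnected center of $S$ and is therefore zero-dimensional. Dualizing the exact sequence $0\to\dual{q(A_0)}\to\dual{A_0}\to\dual{(A_0\cap S)}\to0$, whose last term is torsion, shows $\dim A_0=\dim q(A_0)\le2^{\aleph_0}$, as required. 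The main obstacle throughout is exactly the phenomenon of \ref{exam:wZlarge}: the remedy is to remove only the compact semisimple part $S$, which both makes \ref{sKOfiniteIfAbK} available on $\bar G$ and meets the connected central group $A_0$ in a merely zero-dimensional subgroup, so costs nothing in dimension.
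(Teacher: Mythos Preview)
Your argument is correct and takes a genuinely different route from the paper's. The paper stays inside $G$: it takes the maximal compact normal subgroup~$K$, invokes Lemma~\ref{IwasawaRevisited} to write $G=C_0K$ with $C_0$ the connected centralizer of~$K$, applies~\ref{sKOfiniteIfAbK} to the subgroup~$C_0$ (whose compact normal subgroup $C_0\cap K$ is central in~$K$, hence abelian) to bound $\wt(\closure{C_0'})$, and then shows $\KO{G}_0\le\closure{C_0'}$ by checking that the obstruction $B:=C_0\cap\closure{K'}\le\Z{K}\cap\closure{K'}$ is totally disconnected. You instead pass to the \emph{quotient} $\bar G=G/S$ by the compact semisimple part $S=\closure{[K_0,K_0]}$, apply~\ref{sKOfiniteIfAbK} to~$\bar G$ (where $\bar K_0$ is now abelian), and then recover the bound on $\dim A_0$ from $\dim q(A_0)$ via the observation that $A_0\cap S\le\Z{S}$ is zero-dimensional. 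Both arguments ultimately rest on the same structural fact---the center of a compact connected semisimple group is totally disconnected---but you deploy it on the kernel side rather than to locate $\KO{G}_0$ inside a small commutator group. Your approach avoids Lemma~\ref{IwasawaRevisited} and the product decomposition $\closure{G'}=\closure{C_0'}\,\closure{K'}$, trading them for a short Pontryagin-dual rank computation; it is a bit more self-contained and does not require~$K$ to be the \emph{maximal} compact normal subgroup.
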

\begin{proof}
  Let $K$ be the maximal compact normal subgroup of~$G$
  (cf.~\cite[Thm.\,14]{MR0029911}). Then~\ref{IwasawaRevisited} says
  $G=C_0K$ where $C_0$ is the connected component of the centralizer
  $C:=\C[G]{K}$ of~$K$ in~$G$. Thus $\closure{G'} = \closure{C_0'K'} =
  \closure{C_0'}\,\closure{K'}$ because $\closure{K'}$ is compact, and
  $(\Z{G}\cap\closure{G'})_0 \le C_0\cap\closure{G'}$ is contained in
  $\closure{C_0'}(C_0\cap\closure{K'}) \le C_0$.

  The locally compact group $C_0/(C_0\cap K)$ admits an injective
  continuous homomorphism into the Lie group~$G/K$ and thus is a Lie
  group itself. Moreover, we have that $C_0\cap K$ is contained in the
  center of~$K$ and thus abelian. Thus~\ref{sKOfiniteIfAbK} applies
  to~$C_0$ and~$K$, yielding $\wt(\closure{C_0'})\le2^{\aleph_0}$. %

  The group $B:=C_0\cap\closure{K'}$ is contained in the intersection
  of~$\closure{K'}$ with the center of the compact group~$K$. Thus $B$
  is totally disconnected, see~\cite[9.23]{MR2261490}, %
  and the quotient $B\closure{C_0'}/\closure{C_0'} \cong
  B/(B\cap\closure{C_0'})$ is totally disconnected, as well.  This
  yields that the connected component $\KO{G}_0$ of $\KO{G}$ is
  contained in $\closure{C_0'}$, and the bound
  $\wt(\KO{G}_0)\le2^{\aleph_0}$ is established. %
  Now the connected component of the maximal compact subgroup of
  $\KO{G}$ is monothetic by~\ref{characterizeMonothetic}.
\end{proof}

\begin{theo}\label{KorConnLCGfinGen}
  For each $G\in\Conn\LCG$ there exist $A\in\CA$ and natural
  numbers~$e,f$ such that $A_0$ is monothetic and
  $\KO{G}\cong\ZZ^f\times A\times\RR^e$.
  In particular, the dimension of members of\/~$\KO{\Conn\LCG}$ is
  bounded by~$2^{\aleph_0}$. 
\end{theo}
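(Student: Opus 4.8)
The plan is to combine the structural results already established about $\KO{G}$ for connected locally compact groups into a single classification. By Theorem~\ref{connCenter} and Lemma~\ref{KOinZcapComm}, we know $\KO{G} \le \Z{G}\cap\closure{G'}$, so $\KO{G}$ is a closed abelian subgroup of a connected locally compact group, hence $\KO{G} \in \LCA$. First I would invoke Proposition~\ref{noLargeDiscrete}, which tells us that $\KO{G}$ is compactly generated. By the standard structure theorem for compactly generated locally compact abelian groups (as recalled in Remark~\ref{cpGenLCA}, cf.\ \cite[23.11]{MR2226087}), every such group is isomorphic to $\ZZ^f \times A \times \RR^e$ for some natural numbers $e,f$ and some compact abelian group $A \in \CA$. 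This immediately produces the desired product decomposition; the only remaining point is to verify that the compact factor $A$ has monothetic connected component and to extract the dimension bound.

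Next I would pin down the structure of the compact factor. In the decomposition $\KO{G}\cong\ZZ^f\times A\times\RR^e$, the connected component satisfies $\KO{G}_0 \cong A_0 \times \RR^e$, since $\ZZ^f$ and the totally disconnected part of $A$ contribute nothing to the identity component. Proposition~\ref{connKOsmall} gives precisely $\wt(\KO{G}_0)\le 2^{\aleph_0}$, and moreover states that the connected component of the maximal compact subgroup of $\KO{G}$—which is exactly $A_0$—is monothetic. This is the key input that makes $A_0$ monothetic, as claimed. Thus the only work here is to correctly identify $A_0$ with the connected component of the maximal compact subgroup of $\KO{G}$, which follows because in $\ZZ^f\times A\times\RR^e$ the maximal compact subgroup is $A$ and its connected component is $A_0$.

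Finally I would read off the dimension bound. The topological dimension of $\KO{G}\cong\ZZ^f\times A\times\RR^e$ equals $e + \dim A$, and since $A_0$ carries all the dimension of $A$ (the totally disconnected part being zero-dimensional), $\dim A = \dim A_0$. By Lemma~\ref{exas:Weight}, for the connected compact abelian group $A_0$ we have $\dim A_0 = \dim_\QQ(\QQ\otimes\dual{A_0}) = \wt(A_0)$ when the weight is infinite, and $\wt(A_0)\le\wt(\KO{G}_0)\le 2^{\aleph_0}$ by monotonicity of the weight (Lemma~\ref{exas:Weight}) together with Proposition~\ref{connKOsmall}. The finite contribution $e$ from the $\RR^e$ factor does not affect the bound, so $\dim\KO{G}\le 2^{\aleph_0}$.

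The main obstacle I anticipate is not any single hard estimate—each ingredient is already in hand—but rather the bookkeeping of correctly matching the abstract structure factors ($A_0$, maximal compact subgroup, connected component) across the three cited results so that Proposition~\ref{connKOsmall} applies to the right subgroup. In particular, one must be careful that the ``maximal compact subgroup of $\KO{G}$'' referenced in~\ref{connKOsmall} really is the factor $A$ in the decomposition, which requires noting that $\KO{G}$, being itself a compactly generated locally compact abelian group of the stated form, has $A$ as its (unique) maximal compact subgroup. Once this identification is made transparent, the theorem follows by assembling the pieces.
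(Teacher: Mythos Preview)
Your proposal is correct and follows essentially the same route as the paper: the paper's proof simply says to combine Proposition~\ref{connKOsmall} with Proposition~\ref{noLargeDiscrete}, Remark~\ref{cpGenLCA}, and Lemma~\ref{characterizeMonothetic}, which is exactly the assembly you spell out (with the minor difference that you invoke the monotheticity conclusion directly from~\ref{connKOsmall} rather than via~\ref{characterizeMonothetic}). Your explicit identification of $A$ as the unique maximal compact subgroup of $\KO{G}$ and the bookkeeping for the dimension bound are the details the paper leaves implicit.
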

\begin{proof}
  We combine~\ref{connKOsmall} with~\ref{noLargeDiscrete},
  \ref{cpGenLCA}, and~\ref{characterizeMonothetic}. 
\end{proof}

\goodbreak
\section{Partial results}

The classes $\KO{\SepLie}$, $\KO{\Lie}$, $\KO{\LCG}$
and~$\KO{\ProLie}$ are quite large and not very well understood. We
will indicate some large subclasses and note
(in~\ref{KOLCGnotClosedUnderQ}) that these classes are not closed
under the operations~$\cS$, $\hQ$, and~$\cQ$.

\goodbreak
\begin{theo}\label{KOsepLie}
  The class $\KO{\SepLie}$ contains the following:
  \begin{enumerate}
  \item The class $\KO{\Conn\Lie} = \KO{\almConn\Lie} = \CgAL$,
    cf.~{\upshape\ref{KOconnLie}} and~{\upshape\ref{almConnLie}}.
  \item\label{SimpleCharP} All countable discrete simple
    groups with infinite elementary abelian subgroups, 
    cf.~{\upshape\ref{ex:Burnside}\ref{bounded}}. 
  \item Countably infinite discrete simple groups with finitely many
    conjugacy classes, such as those constructed as HNN-extensions,
    see~{\upshape\ref{ex:Burnside}\ref{HNN}}.
  \end{enumerate}
  Moreover, $\KO{\SepLie}$ is closed under~$\fP$.
\end{theo}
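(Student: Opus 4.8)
The plan is to assemble the four assertions from results already in hand, so that the only genuinely new input is the non-linearity of the groups in~\textbf{(b)} and~\textbf{(c)}, which I would isolate in~\ref{ex:Burnside}. For~\textbf{(a)} there is nothing to prove beyond a remark on topology: every connected Lie group is $\sigma$-compact (cover it by the powers of a compact symmetric identity neighbourhood), hence separable, so $\Conn\Lie\subseteq\SepLie$. Since $\KO{\cdot}$ is monotonic on classes --- directly from $\KO{\mathcal{G}}=\smallset{\KO{G}}{G\in\mathcal{G}}$ --- this gives $\KO{\Conn\Lie}\subseteq\KO{\SepLie}$, and $\KO{\Conn\Lie}=\KO{\almConn\Lie}=\CgAL$ is~\ref{KOconnLie} together with~\ref{almConnLie}.

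For~\textbf{(b)} and~\textbf{(c)} I would first note that a countable discrete group is $\sigma$-compact and, by our convention, a Lie group; hence each of the groups considered lies in $\SepLie$. The decisive structural observation is that for a \emph{simple} discrete group $G$ the fully invariant subgroup $\KO{G}$ (see~\ref{fullyInv}) is normal and therefore equals $\{1\}$ or $G$. If some $\rho\in\OR{G}$ were non-trivial, then $\ker\rho$ would be a proper normal subgroup, thus trivial, so $\rho$ would be faithful and $G$ would embed in some $\GL[n]{\CC}$. Hence, for simple $G$,
\[
\KO{G}=G \quad\Longleftrightarrow\quad G \text{ is not linear over } \CC .
\]
The task therefore reduces to certifying non-linearity, and this is exactly the content of~\ref{ex:Burnside}: for~\textbf{(b)} an infinite elementary abelian $p$-subgroup cannot embed in $\GL[n]{\CC}$, because commuting elements of finite order are simultaneously diagonalisable and the $p$-torsion of the diagonal torus $(\CC^\times)^n$ has order at most $p^n$; for~\textbf{(c)} the infinite simple HNN-groups are not residually finite, whereas a finitely generated linear group is residually finite by Mal'cev. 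In either case $G$ is non-linear, so $\KO{G}=G\in\KO{\SepLie}$.

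Closure under $\fP$ I would read off from~\ref{cartProd}. A finite product of $\sigma$-compact Lie groups is again a $\sigma$-compact Lie group, so $\fP(\SepLie)=\SepLie$; hence~\ref{cartProd} gives $\fP(\KO{\SepLie})=\KO{\SepLie}$. Concretely, writing $A_i=\KO{G_i}$ with $G_i\in\SepLie$ and $G:=\prod_{i=1}^{k}G_i\in\SepLie$, that proposition yields $\KO{G}=\prod_{i=1}^{k}\KO{G_i}=\prod_{i=1}^{k}A_i$, placing the product in $\KO{\SepLie}$.

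The only substantive difficulty is the non-linearity behind~\textbf{(b)} and~\textbf{(c)}; everything else is bookkeeping built on~\ref{KOconnLie}, \ref{almConnLie}, \ref{fullyInv} and~\ref{cartProd}. I would therefore keep the present argument short --- reducing each simple example to the single question of linearity via the displayed equivalence --- and locate the group-theoretic constructions together with their non-linearity proofs in~\ref{ex:Burnside}.
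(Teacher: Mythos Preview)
Your argument for part~(c) has a genuine gap. You reduce non-linearity to residual finiteness via Mal{\cprime}cev's theorem, but Mal{\cprime}cev only tells you that \emph{finitely generated} linear groups are residually finite. The class in~(c) is ``countably infinite discrete simple groups with finitely many conjugacy classes'' --- no finite-generation hypothesis is present, and having finitely many conjugacy classes does not by itself force finite generation. So your chain
\[
\text{infinite simple} \Rightarrow \text{not residually finite} \Rightarrow \text{not linear}
\]
breaks at the second arrow for groups that are not finitely generated. The paper's intended route (the reference to~\ref{ex:Burnside}\ref{HNN}) instead invokes Burnside's result~\ref{Burnside}\ref{BurnsideClasses}: a subgroup of $\GL[n]{\CC}$ with only finitely many conjugacy classes is finite. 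That applies directly, with no generation hypothesis, and gives the non-linearity of every group in the class. You should replace the Mal{\cprime}cev step by this.

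Everything else is fine and matches the paper's (implicit) proof. Your diagonalisation argument for~(b) is a clean direct proof of the abelian case of Burnside's bounded-exponent theorem~\ref{Burnside}\ref{BurnsideBounded}; the paper simply quotes the latter, so the two approaches coincide in spirit. The reductions for~(a) and for closure under~$\fP$ via~\ref{cartProd} are exactly as intended.
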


Note that $\KO{\SepLie}$ is considerably larger than
$\KO{\almConn\Lie} = \CgAL$. 

\goodbreak
\begin{theo}\label{KOLCG}
  The class $\KO{\LCG}$ contains the following:
  \begin{enumerate}
  \item The class $\KO{\SepLie} \cup \KO{\Conn\LCG}$, and thus
    $\CgAL$ and all compact monothetic groups.
  \item  All groups of the form $C^D$
    where $C$ is a compact simple non-abelian group and $D$ is
    infinite, see~{\upshape\ref{powerD}}.
  \item\label{pAdicSimple}  All simple non-discrete
    totally disconnected locally compact groups.
  \item\label{tooLargeSimple}  All simple discrete groups
    of cardinality larger than~$2^{\aleph_0}$.
  \item
    All discrete simple groups with infinite elementary abelian
    subgroups. 
  \end{enumerate}
  Moreover, $\KO{\LCG}$ is closed under~$\fP$ but not under~$\aP$.
\end{theo}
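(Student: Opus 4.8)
The plan is to verify the five listed containments and then the two closure statements. The first two items are essentially bookkeeping. Since $\SepLie\subseteq\LCG$ and $\Conn\LCG\subseteq\LCG$ (Figure~\ref{fig:inclusions}) and $\KO{G}$ is an intrinsic invariant of~$G$, we get $\KO{\SepLie}\cup\KO{\Conn\LCG}\subseteq\KO{\LCG}$ at once; the supplementary claim that this already yields $\CgAL$ and all compact monothetic groups follows from~\ref{KOsepLie} together with the inclusion $\cMon\subseteq\KO{\Conn\LCG}$ extracted in the proof of~\ref{KOconnProLie}. The power groups $C^D$ are handled verbatim by Example~\ref{powerD}, where $G:=D\ltimes C^D\in\LCG$ and $\KO{G}=G_0\cong C^D$.

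The substance lies in the three simple-group items, all of which I would treat by a single principle: for the group $G$ in question I show that \emph{every} ordinary representation $\rho\colon G\to\GL[n]{\CC}$ has nontrivial kernel. Since $\ker\rho$ is always a closed normal subgroup and $G$ is simple, this forces $\ker\rho=G$, so $\rho$ is trivial; hence $\KO{G}=G$, and as $G\in\LCG$ we obtain $G\in\KO{\LCG}$. What changes from item to item is only the reason the kernel cannot be trivial. For a simple non-discrete totally disconnected~$G$, I invoke van Dantzig's theorem to find a compact open subgroup $U$ inside $\rho^{-1}(V)$, where $V$ is a neighbourhood of $\id$ in $\GL[n]{\CC}$ containing no nontrivial subgroup (the no-small-subgroups property of Lie groups); then $\rho(U)=\{\id\}$, while $U\neq\{1\}$ because $G$ is non-discrete, so $\ker\rho\neq\{1\}$. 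For a simple discrete~$G$ with $|G|>2^{\aleph_0}$, I use $|\GL[n]{\CC}|=2^{\aleph_0}$: a trivial kernel would embed $G$ into $\GL[n]{\CC}$, contradicting the cardinality bound, so again $\ker\rho\neq\{1\}$. For a simple discrete~$G$ containing an infinite elementary abelian $p$-subgroup~$E$, I note that $\rho(E)$ is a commuting family of diagonalizable matrices all of order dividing~$p$, hence simultaneously diagonalizable with entries among the $p$-th roots of unity; there are only finitely many such matrices, so $\rho(E)$ is finite and $E\cap\ker\rho$ is infinite, in particular nontrivial.

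For the closure assertions, closure under $\fP$ is immediate from~\ref{cartProd}, since $\fP(\LCG)=\LCG$ and $\KOf$ preserves finite products, whence $\fP(\KO{\LCG})=\KO{\LCG}$. To see that $\KO{\LCG}$ is not closed under $\aP$, I observe that every member of $\KO{\LCG}$ is a closed subgroup of a locally compact group and hence itself locally compact; but $\RR\in\KO{\LCG}$ by~\ref{SL2timesH}, whereas the countable power $\RR^{\aleph_0}\in\aP(\{\RR\})$ fails to be locally compact and so cannot lie in $\KO{\LCG}$.

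The main obstacle I anticipate is the elementary-abelian case: one must justify carefully that a commuting family of finite-order matrices is simultaneously diagonalizable and that this bounds $|\rho(E)|$ in terms of $n$ and~$p$, so that the infinitude of $E$ genuinely forces $E\cap\ker\rho\neq\{1\}$. More generally, the delicate point throughout the three simple-group items is to ensure that the kernel produced is a \emph{nontrivial closed} normal subgroup, so that simplicity in the appropriate (closed-normal) sense may legitimately be applied.
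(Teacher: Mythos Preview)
Your proposal is correct and matches the paper's intended argument, which is left almost entirely implicit (the theorem in the paper carries no proof block; the justification is meant to be read off from the cross-references and the surrounding examples). Your treatment of items~(a)--(b) and the closure under~$\fP$ is exactly the bookkeeping the paper has in mind, and your failure-of-$\aP$ example via $\RR^{\aleph_0}\notin\LCG$ is the natural one.

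The only point worth flagging is a small methodological difference in item~(e). The paper's route, via~\ref{KOsepLie}\ref{SimpleCharP} and~\ref{ex:Burnside}\ref{bounded}, is to invoke Burnside's theorem~\ref{Burnside}\ref{BurnsideBounded}: an infinite elementary abelian $p$-subgroup $E$ has finite exponent, so $\rho(E)$ is finite for any $\rho\in\OR{G}$, forcing $\ker\rho\cap E\neq\{1\}$. You instead give the underlying linear-algebra argument directly (simultaneous diagonalization over the $p$-th roots of unity bounds $|\rho(E)|\le p^n$). Both are fine; yours is more self-contained, the paper's is shorter once Burnside is on record. Your caution about ensuring the kernel is a nontrivial \emph{closed} normal subgroup is well placed and correctly handled: in item~(c) you only need $U\subseteq\ker\rho$ (not $U$ normal), and $\ker\rho$ is automatically closed and normal, so topological simplicity applies.
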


The class in~\ref{KOLCG}\ref{pAdicSimple} includes the simple $p$-adic
groups such as $\PSL[n]{\QQ_p}$.  Among the groups
in~\ref{KOLCG}\ref{tooLargeSimple} we find, for instance, the simple classical
groups over large fields such as $\PSL[n]{F}$ where $F=\QQ(X)$ is a
purely transcendental extension with a transcendency basis $X$ such
that $|X|>|\RR|$.

\enlargethispage{8mm}

\begin{prop}\label{noSOthree}
  The groups $\SO3{\RR}$ and\/ $\PSL[2]{\CC}$ do not belong to $\KO{\TG}$. 
\end{prop}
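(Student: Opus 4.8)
The plan is to assume, for contradiction, that some $G\in\TG$ has $\KO{G}$ isomorphic to one of these groups, and then to exhibit an ordinary representation of $G$ that does not vanish on $A:=\KO{G}$. Since $A\le\ker\rho$ for every $\rho\in\OR{G}$, such a representation cannot exist. The representation I would use is the one that $G$ induces on the Lie algebra of $A$ through conjugation; its faithfulness on $A$ will come from the fact that $\SO3{\RR}$ and $\PSL[2]{\CC}$ are connected and have trivial center.

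Suppose then that $A:=\KO{G}$ is topologically isomorphic to $\SO3{\RR}$ (respectively to $\PSL[2]{\CC}$). Then $A$ is a connected Lie group with trivial center, closed and normal in $G$, and $\lie{A}=\Hom{\RR}{A}$ is a real vector space of finite dimension $n$ (with $n=3$, resp. $n=6$). Because $A$ is normal, the bijection $\Ad(g)$ of $\lie{G}$ provided by~\ref{connProLieSuff}(a) maps the subspace $\lie{A}\subseteq\lie{G}$ onto itself, and the limit formulas in~\ref{addLie} show that $\Ad(g)|_{\lie{A}}$ respects sum, scalar multiplication and bracket, hence is an $\RR$-linear automorphism of $\lie{A}$. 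By~\ref{connProLieSuff}(b) the action $G\times\lie{A}\to\lie{A}$ is continuous; evaluating on a basis of the finite-dimensional space $\lie{A}$ turns this into a continuous homomorphism $\rho\colon G\to\GL{(\lie{A})}\cong\GL[n]{\RR}\le\GL[n]{\CC}$, so that $\rho\in\OR{G}$.

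Next I would compute $\rho|_A$. For $a\in A$ the operator $\rho(a)$ sends $X\in\lie{A}$ to $t\mapsto aX(t)a^{-1}$, which is precisely the value at $a$ of the intrinsic adjoint representation $\Ad[A]$ of the Lie group $A$. Since $A$ is connected, the kernel of $\Ad[A]$ equals the centralizer of the (dense) subgroup generated by the range of the exponential map, that is, the center $\Z{A}$, by~\ref{connProLieSuff}(e) applied to $A$ itself. As $\SO3{\RR}$ and $\PSL[2]{\CC}$ are centerless, $\Ad[A]$ is faithful, so $\rho|_A$ is nontrivial (because $A\ne\{1\}$). This contradicts $A=\KO{G}\le\ker\rho$. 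Hence no such $G$ exists, and neither $\SO3{\RR}$ nor $\PSL[2]{\CC}$ belongs to $\KO{\TG}$.

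The step needing the most care is the manufacture of a bona fide ordinary representation $\rho$ out of the conjugation action. One must verify that $\lie{A}$, carried with the subspace topology from $\lie{G}$, is the usual finite-dimensional Lie algebra of $A$ — for this it matters that $A$, being locally compact, is a closed and hence embedded subgroup of $G$ — that each $\Ad(g)$ is genuinely $\RR$-linear on $\lie{A}$ and not merely a homeomorphism, and that joint continuity of a linear action on a finite-dimensional space yields continuity of $g\mapsto\rho(g)$ into $\GL[n]{\RR}$. It is worth noting that for the ambient group $G$ the argument uses only parts~(a) and~(b) of~\ref{connProLieSuff}, which hold for every topological group, while the more delicate part~(e) is invoked only for the Lie group $A$; the feature of the two groups that makes everything work is that they are connected with trivial center, so that conjugation by itself already separates their points.
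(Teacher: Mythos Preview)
Your argument is correct, and it takes a genuinely different route from the paper's. Both proofs extract an ordinary representation of $G$ from the conjugation action on $A:=\KO{G}$, but they package this differently. The paper first proves the special fact that every automorphism of $\SO3{\RR}$ (respectively $\PSL[2]{\CC}$) is inner; from this it deduces that $G$ decomposes as the direct product of $A$ with its centralizer~$C$, so that the projection $G\to G/C\cong A$ (composed with a faithful linear realization of~$A$) yields the contradiction. You instead act directly on the finite-dimensional space $\lie{A}$ and observe that $\rho|_A$ is the intrinsic adjoint $\Ad[A]$, which is faithful because $A$ is connected and centerless.

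What your approach buys is generality: it uses only that $A$ is a connected centerless Lie group, and therefore applies verbatim to groups such as $\PSL[2]{\RR}$, which possess outer automorphisms. The paper itself remarks, immediately after the corollary following this proposition, that its argument does \emph{not} easily extend to such groups precisely because the ``all automorphisms are inner'' step fails. Conversely, the paper's route yields the extra structural information $G\cong C\times A$, though this is not needed for the proposition itself. Your closing paragraph correctly flags the points that need care (linearity of $\Ad(g)|_{\lie{A}}$ via continuity of conjugation and the limit formulas, identification of the two topologies on~$\lie{A}$, and passage from joint continuity of the action to continuity of $\rho$ into $\GL[n]{\RR}$); all of these are routine once noticed.
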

\begin{proof}
  The Lie algebra $\lie{\SO3\RR}$ is isomorphic to the vector product
  algebra $(\RR^3,\times)$, and $\Ad(\SO3\RR)\cong\SO3\RR$ contains
  all automorphisms of that algebra. Therefore, each automorphism of
  $\SO3\RR$ is an inner automorphism, see\footnote{
  The discussion of $\Aut{\lie{\SO3\RR}}$
  in~\cite[p.\,252]{MR2261490} contains an error; indeed
  $\Orth3\RR\setminus\SO3\RR \not\subseteq \Aut{\lie{\SO3\RR}}$.}%
  ~\cite[6.59]{MR2261490}. 

  Now assume that there exists a group $G\in\TG$ with
  $\KO{G}\cong\SO3\RR$. Then $G$ is the direct product of $\KO{G}$
  with its centralizer~$C$, see~\ref{IwasawaRevisited}. Thus
  $G/C\cong\SO3{\RR}$ has a faithful ordinary representation, and
  $\KO{G}\le C$. This is a contradiction.
  
  For the group $\PSL[2]{\CC}\cong\PGL[2]{\CC}$ we can proceed in the
  same way because this group also has only inner automorphisms,
  see~\cite{54.0149.02}, cf.~\cite{MR606555}. 
\end{proof}

\begin{coro}\label{KOLCGnotClosedUnderQ}
  The classes $\KO{\LCG}$ and $\KO{\ProLie}$ are not closed under any
  one of the operations~$\cS$, $\hQ$, or~$\cQ$.  
\qed
\end{coro}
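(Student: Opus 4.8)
The plan is to derive the whole corollary from Proposition~\ref{noSOthree} together with Example~\ref{powerD}. Since $\KO{\LCG}\cup\KO{\ProLie}\subseteq\KO{\TG}$, Proposition~\ref{noSOthree} already tells us that neither $\SO3\RR$ nor $\PSL[2]\CC$ lies in either class. It therefore suffices, for each of the three operations and each of the two classes, to exhibit a single member whose image under the operation is one of these two non-members. First I would isolate the one clean case and then confront the two genuine difficulties.

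For the operations $\cS$ and $\hQ$ on $\KO{\LCG}$ the group $W:=(\SO3\RR)^D$ of Example~\ref{powerD} does the job. There an infinite discrete group $D$ produces $G:=D\ltimes W\in\LCG$ with $\KO{G}=W$, so that $W\in\KO{\LCG}$. Fixing $d_0\in D$, the coordinate copy of $\SO3\RR$ is a closed subgroup of $W$, whence $\SO3\RR\in\cS(\KO{\LCG})$; and the coordinate projection $W\to\SO3\RR$ is a continuous open surjection with closed kernel, whence $\SO3\RR\in\hQ(\KO{\LCG})$. As $\SO3\RR\notin\KO{\LCG}$, this shows that $\KO{\LCG}$ is closed under neither $\cS$ nor $\hQ$.

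Two points require real care, and I expect them to be the main obstacles. The first is the passage to $\KO{\ProLie}$. The semidirect product $G=D\ltimes W$ is \emph{not} a pro-Lie group, and one cannot repair this by passing to a connected group: feeding the adjoint representation through the finite-dimensional quotients of $\lie{H}$, exactly as in the proof of Theorem~\ref{connCenter}, forces $\KO{H}\le\C[H]{H_0}$ and hence makes $\KO{H}$ central, thus abelian, for every \emph{connected} pro-Lie $H$; no such $\KO{H}$ can contain the non-abelian group $\SO3\RR$. A witness inside $\KO{\ProLie}$ must therefore arise from a suitably \emph{disconnected} pro-Lie group whose component action permutes the factors and so hides them from every finite-dimensional representation, exactly in the spirit of Example~\ref{powerD}; the hard part will be to arrange this while keeping $H$ a genuine projective limit of Lie groups. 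The second, and I think the more delicate, point is $\cQ$: every member of $\KO{\LCG}$ is a closed subgroup of a locally compact group, and every member of $\KO{\ProLie}$ is a closed subgroup of a pro-Lie group, so in both cases a member is already complete and coincides with its own completion. Non-closure under $\cQ$ can thus not be witnessed by completing a member directly; one must instead realize a non-member as the completion of a genuinely incomplete group that nevertheless lies in the class. Here $\SO3\RR$, being compact, admits no proper dense complete subgroup, so $\PSL[2]\CC$ is the natural candidate; but a dense proper subgroup of $\PSL[2]\CC$ is neither locally compact nor complete, and so fails to qualify as a member on the nose. Reconciling this tension — producing an incomplete topological group that both completes to such a non-member and genuinely belongs to the class — is the crux, and it is the step I expect to fight with.
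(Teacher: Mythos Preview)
Your treatment of $\cS$ and $\hQ$ for $\KO{\LCG}$ via Example~\ref{powerD} is exactly the paper's argument. The two difficulties you isolate, however, have very different statuses.

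The worry about $\cQ$ evaporates once the operator is read as the paper actually uses it. Although the stated definition says ``completions of members of~$\mathcal G$'', the notation (a hat over~$\mathbf Q$) and the usage elsewhere --- in particular the open problem asking whether $\KO{\Conn\ProLie}$ is closed under~$\cQ$, a question that would be vacuous for bare completion since closed subgroups of pro-Lie groups are already complete --- show that $\cQ(\mathcal G)$ is intended to mean the class of completions of members of~$\hQ(\mathcal G)$. With that reading the $\cQ$ case follows at once from the $\hQ$ case: $\SO3\RR$ is a Hausdorff quotient of $(\SO3\RR)^D\in\KO{\LCG}$ and is already complete, hence lies in $\cQ(\KO{\LCG})\setminus\KO{\LCG}$. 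No incomplete member has to be manufactured, and the tension you describe does not arise.

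Your concern about $\KO{\ProLie}$, by contrast, is well founded and is a genuine gap --- in the paper as much as in your proposal. The paper's proof is a bare \qed\ pointing back to Proposition~\ref{noSOthree}, presumably with Example~\ref{powerD} in mind; but that example is explicitly noted \emph{not} to be pro-Lie, and the only non-abelian members of $\KO{\ProLie}$ the paper actually establishes are the discrete simple groups of~\ref{ex:Burnside} (via $\Lie\subset\ProLie$). A discrete group cannot have $\SO3\RR$ or $\PSL[2]\CC$, with its Lie topology, as a closed subgroup or as a Hausdorff quotient, so neither witness from Proposition~\ref{noSOthree} is reachable from the examples on hand. Your diagnosis of what a working example would have to look like --- a disconnected pro-Lie group whose component group permutes infinitely many simple factors so as to kill every ordinary representation on the connected part --- is accurate, but producing one is real work that the paper does not carry out.
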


\begin{rema}
  The group $\PSL[2]{\RR}$ has outer automorphisms (induced by
  elements of $\GL[2]{\RR}$ with non-square determinant). Every group
  $\PSL[n]{F}$ with $n>2$ over a commutative field~$F$ has outer
  automorphisms induced by polarities of the projective space. Thus
  the argument used in the proof of~\ref{noSOthree} does not easily
  extend to arbitrary classical simple groups.
\end{rema}

\goodbreak
\section{Open questions}
\label{sec:openQuestions}

\begin{prob}\label{sKOfinite}\label{prob:ConnLCG}
  Is it true that %
  $\wt(\KO{G})\le 2^{\aleph_0}$ holds for every $G\in\Conn\LCG$~? %
  \comments %
  If the answer to this problem is affirmative then $\KO{\Conn\LCG} =
  \Small\LCA$, cf.~\ref{KOconnProLie}, \ref{noLargeDiscrete} and~\ref{cpGenLCA}. %
  
  In~\ref{noLargeDiscrete} we have seen that $\KO{\Conn\LCG}$ consists
  of compactly generated groups, and~\ref{connKOsmall} says that the
  connected component of the maximal compact subgroup of $\KO{G}$ lies
  in $\Small\CA$.  For an affirmative answer to~\ref{prob:ConnLCG} it
  suffices to exclude totally disconnected compact groups $A$ with
  $\wt(A) > 2^{\aleph_0}$ from $\KO{\Conn\LCG}$ because that class is
  closed under the passage to Hausdorff quotients, cf.~\ref{quotients}.

  For $G\in\Conn\LCG$ consider the maximal compact normal subgroup~$K$
  of~$G$ and let~$C_0$ be the connected component of the centralizer
  of~$K$. Then the weight of~$C_0\cap\closure{K'}$ may be arbitrarily
  large, as~\ref{exam:wZlarge} shows.  It is therefore clear that we
  have to find a more subtle approach than~\ref{KOinZcapComm} if we
  want to give an affirmative answer to~\ref{sKOfinite}.
\end{prob}

\begin{prob}\label{prob:KOConnProLie}
  Which abelian pro-Lie groups are in $\KO{\Conn\ProLie}$~?  %
  \comments %
  Every element of $\KO{\Conn\ProLie}$ is contained in the center of a
  connected pro-Lie group (namely,~$G$) and thus contained in some
  connected abelian pro-Lie group, cf.~\cite[12.90]{MR2337107}.
  From~\ref{KOconnProLie} we thus infer 
  \[
  \Conn{\Ab\ProLie} \subset \KO{\Conn\ProLie} \subseteq
  \cS(\Conn{\Ab\ProLie}) \,.
  \]
  A discrete abelian group belongs to $\KO{\Conn\ProLie}$ precisely if
  it is finitely generated (and thus lies in~$\CgAL$),
  see~\ref{noLargeDiscrete}.  
\end{prob}

\begin{probs}\quad
  \begin{enumerate}
  \item Is $\KO{\Conn\ProLie}$ closed under~$\cQ$~?
  \item Is $\KO{\Conn\ProLie}$ closed under~$\cS$~?
\end{enumerate}
\comments %
We know that $\KO{\Conn\ProLie}$ is not closed under~$\hQ$ because the
group $\RR^\RR$ belongs to $\KO{\Conn\ProLie}$ but has a quotient
which is not complete (cf.~\cite[4.11]{MR2337107}), and thus does not
lie in $\KO{\Conn\ProLie} \subseteq \Ab\ProLie$.  %

From~\ref{quotientsKOConnProLie} we know that $\KO{\Conn\ProLie}$ is
closed under quotients modulo locally compact groups. %
\end{probs}

\begin{prob}
  What about $\KO{G}$ if $G$ is an almost connected pro-Lie group, or
  an almost connected locally compact group?
\comments
The conclusion of~\ref{connCenter} breaks down if we drop the
assumption of connectedness, cf.~\ref{powerD}.  In many questions
about the structure of pro-Lie groups it is possible to weaken a
connectedness hypothesis to ``almost connected'' (i.e., compactness of
$G/G_0$). For instance, almost connected locally compact groups are
pro-Lie groups while for arbitrary disconnected locally groups the
homomorphisms into Lie groups need not separate the points.  We have
seen in~\ref{almConnLie} that $\KO{\almConn\Lie} = \KO{\Conn\Lie}$ is very
well behaved.

The examples in~\ref{powerD} fail to be almost connected,
and also fail to be pro-Lie groups. The same applies to most of our
examples of groups $G\in\LCG$ with $\KO{G}=G$.
Note that the discrete examples are in $\Lie\subset\ProLie$. 
\end{prob}

\begin{probs}
  For $\mathcal{G}\in\{\LCG,\ProLie\}$ we ask: 
  \begin{enumerate}
  \item Is $\CA$ completely contained in $\KO{\mathcal{G}}$~?
  \item Is $\LCA$ completely contained in $\KO{\mathcal{G}}$~?
  \item Which part of $\CG$ is contained in $\KO{\mathcal{G}}$~?
  \item Which discrete groups are in $\KO{\mathcal{G}}$~?
  \item What \emph{is} $\KO{\mathcal{G}}$~?
\end{enumerate}
\comments %
If we drop all connectedness assumptions on~$G$ we obtain
examples $G$ where $\KO{G}$ is not abelian. However, the inclusion
$\KO{\mathcal{G}}\subset\mathcal{G}$ falls far from being an
equality. The class $\KO{\mathcal{G}}$ appears to be complicated and
not accessible to an easy ``constructive'' description (such as:
``take the following basic examples and use certain constructions like
products or quotients''). 

Definitely, the class $\CG$ is not completely contained in
$\KO{\mathcal{G}}$. For instance, we know that
$\CG\cap\KO{\mathcal{G}}$ is not closed under~$\cS$ or~$\hQ$,
see~\ref{noSOthree}.

It is also open whether arbitrary discrete groups are in
$\KO{\ProLie}$. 
\end{probs}

\enlargethispage{8mm}
\section{Appendix: linear groups}
\label{sec:appendix}

We collect some known facts regarding the question whether a given
group is \emph{linear}, i.e., admits a faithful ordinary
representation. In our present terminology, this means that the
trivial group is a member of $\smallset{\ker\rho}{\rho\in\OR{G}}$.
Examples like the additive group~$\ZZ_p$ of $p$-adic integers or any
infinite elementary abelian group show that this condition is, in general, 
much stronger than the condition that $\KO{G}$ is trivial.

By way of contraposition, we use the criteria for linearity in certain
examples in order to determine $\KO{G}$ or to even show $\KO{G}=G$.
See~\ref{ex:Burnside} but also~\ref{Sl2}, \ref{SL2timesH},
\ref{exa:monotheticConnCA}.

\begin{theo}[Mal{\cprime}tsev~\cite{MR0013164}, see~\cite{MR0034396}]
  \label{maltsev}
  Let $G$ be a connected Lie group.
  \begin{enumerate}
  \item The group $G$ is linear if, and only if, its solvable radical
    and its maximal semisimple subgroups are linear.
  \item If $G$ is semisimple and linear and $Z$ is a discrete normal
    (i.e., central) subgroup of~$G$ then $G/Z$ is linear, as well.
  \item\label{maltsev3} If $G$ is solvable then it is linear if, and
    only if, it is the semidirect product of a maximal compact
    subgroup and a simply connected normal subgroup. %
    \qed
  \end{enumerate}
\end{theo}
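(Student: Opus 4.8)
The plan is to run all three parts through the Levi--Mal\cprime tsev decomposition $G = R\cdot S$, where $R$ is the solvable radical and $S$ a maximal semisimple (Levi) subgroup, with $R\cap S$ a discrete central subgroup of~$S$; after passing to the universal cover one may arrange a genuine semidirect product of simply connected groups and recover $G$ as a quotient by a discrete central subgroup. The two ``only if'' implications in~(a) are immediate: a faithful $\rho\in\OR{G}$ restricts to faithful representations of the subgroups $R$ and $S$, so each is linear. Everything of substance lies in the converse of~(a) together with the refinements~(b) and~(c), which I would prove first and then feed into the converse of~(a).

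For~(b) I would use that a linear connected semisimple group embeds, via a faithful representation, as a subgroup of an affine algebraic group, and in particular has \emph{finite} center; an infinite discrete central subgroup (as in the universal covering of $\SL[2]\RR$) already obstructs linearity, cf.~\ref{Sl2}. A discrete normal subgroup $Z$ of the connected group $G$ is automatically central, so its image under a faithful representation is a finite central subgroup, and the quotient of an affine algebraic group by a finite central subgroup is again affine algebraic, hence linear. This yields linearity of~$G/Z$.

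For~(c) I would argue as follows. A simply connected solvable Lie group is always linear: Ado's theorem furnishes a faithful representation of $\lie{G}$, and since the exponential map is a diffeomorphism in the simply connected solvable case this integrates to a faithful representation of the group, realizing it as a closed group of triangular matrices. A maximal compact subgroup $T$ of a connected solvable $G$ is a torus, hence linear via its characters. If $G=T\ltimes N$ with $N$ simply connected closed normal, one combines a faithful representation of $N$ with characters separating~$T$; the subtle point is that the lattice $\ker(\tilde{T}\to T)$ one must kill sits in the torus direction, hence off $\closure{G'}$, and can be neutralized by characters without disturbing faithfulness on~$N$. Conversely, if $G$ is linear and solvable, then \ref{CpCommSolvCor} forbids any nontrivial compact subgroup inside $G'$ --- this is exactly the obstruction exhibited in~\ref{realHeis} --- and its absence forces the maximal torus to meet $\closure{G'}$ trivially, map injectively into the abelianization $G/\closure{G'}$, and thereby split off, giving the required semidirect decomposition.

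Finally, for the hard converse of~(a): from faithful representations of $R$ and $S$ I must manufacture one of $G=R\rtimes S$. The strategy is to pass to $\lie{G}=\lie{R}\rtimes\lie{S}$, take an Ado representation that is faithful and sends the nilradical to nilpotent operators, and integrate it on the simply connected cover; the resulting kernel is a discrete central subgroup, which I would analyze by splitting $\Z{\tilde{G}}$ into its contributions from the radical and from~$S$. The radical contribution is controlled by the solvable analysis of~(c) and the semisimple contribution by~(b), and the hypotheses that $R$ \emph{and} $S$ are both linear are precisely what guarantees that the discrete subgroup one must quotient by lies inside a realizable kernel. I expect this gluing step --- reconciling the representation of the normal solvable part with the semisimple action while keeping the relevant central lattice in the kernel --- to be the main obstacle; the vanishing of the first and second cohomology of the semisimple factor~$S$ (Whitehead's lemmas) is the tool I would use to split the extension and remove the obstructions.
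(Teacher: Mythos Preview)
The paper does not prove this theorem: it is stated in the appendix as a known result of Mal\cprime tsev (with the reference to Harish-Chandra's exposition) and closed with a \qed\ symbol immediately after part~\ref{maltsev3}. So there is no ``paper's own proof'' to compare against; your outline is being measured only against correctness.

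Your sketch is broadly reasonable, but there is one concrete error. In part~(c) you write that ``the exponential map is a diffeomorphism in the simply connected solvable case''. This is false: it holds for simply connected \emph{nilpotent} Lie groups, but not for solvable ones in general. The standard counterexample is the universal cover $\widetilde{E(2)}$ of the Euclidean motion group of the plane, which is simply connected and solvable yet not exponential (the exponential map is neither injective nor surjective). Consequently your proposed mechanism for integrating an Ado representation faithfully to the group breaks down at exactly this point. The conclusion you want---that a simply connected solvable Lie group is linear---is nonetheless true, but the argument must go through a different route: one uses the strengthened form of Ado's theorem in which the nilradical is represented by nilpotent endomorphisms, and then argues that the integrated representation of the simply connected group has trivial kernel because any discrete central subgroup of a simply connected solvable group is trivial (the center is connected). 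Alternatively, one builds the faithful representation inductively along a composition series with one-dimensional quotients.

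A second, smaller point: in the ``only if'' direction of~(c) you invoke \ref{CpCommSolvCor} to conclude that a maximal torus meets $\closure{G'}$ trivially. The corollary as stated concerns compact subgroups of $G'$, not of its closure; you should either note that the proof of~\ref{CpCommSolv} actually shows $G'$ acts unipotently (hence so does $\closure{G'}$), or argue directly from~\ref{CpCommSolv} rather than its corollary. Your plan for the converse of~(a) is, as you yourself concede, only a strategy; the gluing of the $R$- and $S$-representations is indeed the crux, and Whitehead's lemmas are the right ingredient, but this is where the real work of Mal\cprime tsev's paper lies.
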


\begin{theo}[Nahlus~\cite{MR1396990}]\label{nahlus}
  Let $G$ be a connected Lie group with Lie algebra
  $\mathfrak{g}:=\lie{G}$, let\/~$\mathfrak{r}$ be the solvable radical
  of the commutator algebra\/~$\mathfrak{g}'$, and let\/~$\mathfrak{z}$ be
  the center of\/~$\mathfrak{g}$. Choose a maximal torus $T$ of the
  solvable radical of~$G$ and a maximal
  semisimple subgroup~$S$. Then $G$ is linear precisely if\/
  $\mathfrak{r}\cap\mathfrak{z}\cap\lie{T}=\{0\}$ and $S$ is linear. 
\qed
\end{theo}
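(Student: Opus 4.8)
The plan is to deduce the statement from the two results of Mal\cprime tsev already recorded in~\ref{maltsev}, reducing everything to a single Lie-theoretic bookkeeping computation. Fix a Levi decomposition $G=S\cdot R$, where $R=\operatorname{Rad}(G)$ is the solvable radical and $S$ is a maximal semisimple connected subgroup; on the infinitesimal level $\mathfrak{g}=\mathfrak{s}\oplus\mathfrak{q}$ with $\mathfrak{s}=\lie{S}$ and $\mathfrak{q}=\lie{R}$, so that $\mathfrak{g}'=\mathfrak{s}\oplus\bigl([\mathfrak{s},\mathfrak{q}]+\mathfrak{q}'\bigr)$. By part~(a) of~\ref{maltsev} the group $G$ is linear if and only if both $S$ and $R$ are linear. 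Since ``$S$ is linear'' is one of the two conditions in the statement, the whole theorem reduces to the claim
\[
R\ \text{is linear}\quad\Longleftrightarrow\quad \mathfrak{r}\cap\mathfrak{z}\cap\lie{T}=\{0\}.
\]

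First I would record the elementary observation that a connected group acts trivially on a torus, because the automorphism group of a torus is discrete. Let $T_1$ be the maximal compact subgroup of $\closure{R'}$; it is a torus, it is characteristic in $\closure{R'}$ and hence normal in $G$, so the observation (applied to the conjugation actions of $R$ and of $S$) shows that $T_1$ is central in~$G$. Thus $\lie{T_1}\subseteq\mathfrak{z}$; moreover $T_1\subseteq\closure{R'}$ gives $\lie{T_1}\subseteq\mathfrak{q}'\subseteq\mathfrak{r}$, and, being a central compact connected subgroup, $T_1$ lies in every maximal compact subgroup, in particular $T_1\subseteq T$ and $\lie{T_1}\subseteq\lie{T}$. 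This yields one inclusion $\lie{T_1}\subseteq\mathfrak{r}\cap\mathfrak{z}\cap\lie{T}$.

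Next I would establish the reverse inclusion. Given $X$ in $\mathfrak{r}\cap\mathfrak{z}\cap\lie{T}$, centrality forces its $\mathfrak{s}$-component to lie in $\Z{\mathfrak{s}}=\{0\}$, so $X\in\mathfrak{q}$ is $\mathfrak{s}$-invariant and lies in $\mathfrak{q}\cap\mathfrak{g}'=[\mathfrak{s},\mathfrak{q}]+\mathfrak{q}'$. Here is the delicate point: I must see that such an $X$ actually sits in $\mathfrak{q}'$ rather than merely in $[\mathfrak{s},\mathfrak{q}]+\mathfrak{q}'$. Using Weyl's theorem on complete reducibility I would decompose the $\mathfrak{s}$-module $\mathfrak{q}$ as $\mathfrak{q}^{\mathfrak{s}}\oplus[\mathfrak{s},\mathfrak{q}]$, note that the $\mathfrak{s}$-submodule $\mathfrak{q}'$ respects this splitting, and conclude from $\mathfrak{s}$-invariance of $X$ that its $[\mathfrak{s},\mathfrak{q}]$-component vanishes, whence $X\in\mathfrak{q}'$. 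Then $\closure{\exp(\RR X)}$ is a torus (as $X\in\lie{T}$) contained in $\closure{R'}$ and central, hence a piece of $T_1$, so $X\in\lie{T_1}$. This gives $\lie{T_1}=\mathfrak{r}\cap\mathfrak{z}\cap\lie{T}$, and the claim becomes ``$R$ linear $\iff T_1$ trivial''. The latter is exactly part~(c) of~\ref{maltsev}: if $R=T\ltimes U$ with $U$ simply connected then $R'\subseteq U$ is torsion-free and $T_1$ is trivial, while the converse---triviality of the toral part of $\closure{R'}$ forcing the maximal torus to admit a simply connected normal complement---is the structural half of that theorem.

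For the forcing of the condition by linearity it is worth isolating a cleaner, purely representation-theoretic argument for the toral obstruction. If $\rho\in\OR{G}$ is faithful and $0\neq X\in\mathfrak{r}\cap\mathfrak{z}\cap\lie{T}$, then $Z_0:=\closure{\exp(\RR X)}$ is a nontrivial central torus with $Z_0\le\closure{G'}$, so $\rho(Z_0)$ is a torus that is central in the connected linear group $\rho(G)$ and contained in $\closure{\rho(G)'}$. Decomposing the representation space into isotypic blocks, a central element acts by a scalar on each block (Schur), whereas an element of $\closure{\rho(G)'}$ has determinant~$1$ on each block; comparing these forces $\rho(Z_0)$ to be finite, hence trivial since it is connected---the central torus ``splits off'', just as $\CC^\times$ does in $\GL[n]{\CC}=\CC^\times\cdot\SL[n]{\CC}$. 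This mirrors the mechanism already exploited in~\ref{CpCommSolv} and~\ref{realHeis}. \textbf{The main obstacle} I expect is the bookkeeping of the third paragraph: correctly matching the three global objects in the intersection---the radical $\mathfrak{r}$ of the \emph{full} commutator algebra $\mathfrak{g}'$, the center $\mathfrak{z}$ of the \emph{full} $\mathfrak{g}$, and the maximal toral subalgebra $\lie{T}$ of $\mathfrak{q}$---with the intrinsic compact part $\lie{T_1}$ of $\closure{R'}$, since it is precisely the semisimple action (through $[\mathfrak{s},\mathfrak{q}]$ inside $\mathfrak{g}'$, and through the shrinking of the center as one passes from $R$ to $G$) that makes the naive $R$-intrinsic condition insufficient.
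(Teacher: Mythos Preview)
The paper does not prove this theorem: it is quoted in the appendix of cited facts, attributed to Nahlus, and closed with a bare \verb|\qed|. So there is no in-paper argument to compare against; what follows is an assessment of your sketch on its own.

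Your overall architecture is sound and is essentially how the result is established: reduce via~\ref{maltsev}(a) to the solvable radical, then translate Mal\cprime tsev's criterion~\ref{maltsev}(c) for~$R$ into the stated Lie-algebraic condition. The identification $\lie{T_1}=\mathfrak{r}\cap\mathfrak{z}\cap\lie{T}$ is the heart of the matter, and your use of Weyl's complete reducibility to push an $\mathfrak{s}$-invariant element of $[\mathfrak{s},\mathfrak{q}]+\mathfrak{q}'$ into~$\mathfrak{q}'$ is exactly the right move: since $[\mathfrak{s},\mathfrak{q}]$ is the sum of the nontrivial isotypic components it carries no $\mathfrak{s}$-invariants, so the invariant part of $[\mathfrak{s},\mathfrak{q}]+\mathfrak{q}'$ equals $(\mathfrak{q}')^{\mathfrak{s}}\subseteq\mathfrak{q}'$. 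Your separate representation-theoretic paragraph for the forward implication (a central torus in $\closure{\rho(G)'}$ must act by roots of unity on each isotypic block, hence be trivial) is clean and stands on its own.

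The one place you under-argue is the backward implication ``$T_1$ trivial $\Rightarrow$ $R$ linear''. You call this ``the structural half'' of~\ref{maltsev}(c), but as stated that result only asserts ``$R$ linear $\iff$ $R=T\ltimes U$''; it does not give ``$T_1$ trivial $\Rightarrow$ $R=T\ltimes U$''. The missing step is short but real: if $T_1$ is trivial then $\closure{R'}$ has trivial maximal compact subgroup and is therefore simply connected; the quotient $R/\closure{R'}$ is connected abelian, hence $\TT^a\times\RR^b$, and the natural map $T\to\TT^a$ is injective (its kernel $T\cap\closure{R'}$ is compact in a group without nontrivial compact subgroups) and surjective (compare fundamental groups through the fibration $\closure{R'}\to R\to R/\closure{R'}$). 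Choosing a complement $\bar V\cong\RR^b$ to the image of~$T$ and setting $V:=\pi^{-1}(\bar V)$ gives a closed connected normal subgroup with $V/\closure{R'}\cong\RR^b$, hence $V$ simply connected, $T\cap V=\{1\}$, and $TV=R$. Now~\ref{maltsev}(c) applies. With this paragraph inserted your proof is complete; contrary to your own diagnosis, the bookkeeping of the third paragraph is fine and this extension argument is where the actual work remained.
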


We have seen that the structure of~$\KO{G}$ for a connected Lie
group~$G$ may depend essentially on the choice of $G$ as a quotient of
its simply connected covering~$\tilde{G}$. This raises the problem of
characterizing those Lie algebras whose associated Lie groups are
\emph{all} linear.

\begin{theo}[Moskowitz~\cite{MR0327979}]
  Let\/~$\mathfrak{g}$ be a Lie algebra of finite dimension
  over~$\RR$, let\/~$\mathfrak{r}$ be the solvable radical of the
  commutator algebra\/~$\mathfrak{g}'$, let\/~$\mathfrak{z}$ be the
  center of\/~$\mathfrak{g}$, and let\/~$\mathfrak{s}$ be a maximal
  semisimple subalgebra of\/~$\mathfrak{g}$. Then \emph{every}
  connected Lie group with Lie algebra~$\mathfrak{g}$ is linear
  precisely if the simply connected group associated to~$\mathfrak{s}$
  is linear and\/~$\mathfrak{r}\cap\mathfrak{z}=\{0\}$.  
\qed
\end{theo}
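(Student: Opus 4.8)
The plan is to deduce Moskowitz's theorem from Nahlus's criterion~\ref{nahlus} by quantifying that criterion over \emph{all} connected Lie groups with the prescribed Lie algebra. Recall that the connected Lie groups with Lie algebra~$\mathfrak{g}$ are precisely the quotients $\tilde{G}/\Gamma$ of the simply connected group~$\tilde{G}$ with Lie algebra~$\mathfrak{g}$ by discrete central subgroups~$\Gamma$. For each such $G$ write $R_G$ for its solvable radical, $T_G$ for a maximal torus of~$R_G$, and $S_G$ for a maximal semisimple subgroup; by~\ref{nahlus}, $G$ is linear exactly when $\mathfrak{r}\cap\mathfrak{z}\cap\lie{T_G}=\{0\}$ and $S_G$ is linear. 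Thus ``every connected $G$ with Lie algebra~$\mathfrak{g}$ is linear'' is the conjunction of these two conditions as $G$ varies, and I would treat the semisimple part and the torus part separately.

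For the semisimple part, note that $S_G$ is always a connected group with Lie algebra~$\mathfrak{s}$, hence a quotient of the simply connected group~$\tilde{S}$ associated to~$\mathfrak{s}$. If $\tilde{S}$ is linear, then every such quotient is linear by the second assertion of~\ref{maltsev}, so ``$S_G$ linear'' holds for all~$G$ at once. Conversely, applying~\ref{nahlus} to $G=\tilde{G}$ forces $S_{\tilde{G}}$ to be linear; since the Levi factor of a simply connected group is itself simply connected (the fibration $R_{\tilde{G}}\to\tilde{G}\to\tilde{G}/R_{\tilde{G}}$ together with simple connectivity of~$\tilde{G}$ and of its radical gives $\pi_1(S_{\tilde{G}})=0$), we have $S_{\tilde{G}}\cong\tilde{S}$, so $\tilde{S}$ is linear. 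Hence ``$S_G$ linear for all~$G$'' is equivalent to ``$\tilde{S}$ linear''.

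For the torus part I want to show that $\mathfrak{r}\cap\mathfrak{z}\cap\lie{T_G}=\{0\}$ holds for every~$G$ precisely when $\mathfrak{r}\cap\mathfrak{z}=\{0\}$. The implication from right to left is immediate. For the other direction I would argue contrapositively: assuming $\mathfrak{r}\cap\mathfrak{z}\neq\{0\}$, I construct a single non-linear~$G$. Since $\mathfrak{r}$ is a characteristic ideal of the ideal~$\mathfrak{g}'$ it is an ideal of~$\mathfrak{g}$, and being solvable it lies in the solvable radical of~$\mathfrak{g}$; choose a one-dimensional subspace $\mathfrak{a}\subseteq\mathfrak{r}\cap\mathfrak{z}$. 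As $\mathfrak{a}$ is central it exponentiates in~$\tilde{G}$ to a closed central one-parameter subgroup $\cong\RR$ lying in the (simply connected) radical~$R_{\tilde{G}}$. Picking a lattice $\Gamma\cong\ZZ$ inside this copy of~$\RR$ and passing to $G:=\tilde{G}/\Gamma$ turns $\exp\mathfrak{a}$ into a central circle $\cong\TT$ contained in the solvable radical~$R_G$; this circle is a subtorus of some maximal torus~$T_G$, so $\mathfrak{a}\subseteq\lie{T_G}$ and $\mathfrak{r}\cap\mathfrak{z}\cap\lie{T_G}\supseteq\mathfrak{a}\neq\{0\}$. By~\ref{nahlus} this $G$ is not linear, contradicting the hypothesis.

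Combining the two parts, ``every connected Lie group with Lie algebra~$\mathfrak{g}$ is linear'' is equivalent to ``$\tilde{S}$ is linear and $\mathfrak{r}\cap\mathfrak{z}=\{0\}$'', which is the claim. The main obstacle I anticipate is the construction in the torus part: one must check carefully that a central subalgebra exponentiates to a \emph{closed} vector subgroup of the simply connected group, that quotienting by a lattice yields a genuine closed compact central torus inside the solvable radical rather than a non-closed winding subgroup, and that this indeed forces $\lie{T_G}$ to meet $\mathfrak{r}\cap\mathfrak{z}$. The semisimple direction, by contrast, is essentially bookkeeping once the second assertion of Mal{\cprime}tsev's theorem is in hand.
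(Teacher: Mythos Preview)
The paper does not prove this theorem at all: it is quoted from Moskowitz's paper with a citation and a terminal \verb|\qed|, i.e., it is simply recorded as a known result from the literature. There is therefore no ``paper's own proof'' to compare against.

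Your derivation from Nahlus's criterion is a legitimate and natural route (albeit historically inverted, since Nahlus's 1997 result refines Moskowitz's 1972 theorem). The argument is essentially correct. The obstacles you flag do resolve favorably: because $\mathfrak{a}\subseteq\mathfrak{r}\cap\mathfrak{z}$ is a central ideal contained in the solvable radical of~$\mathfrak{g}$, its integral subgroup in~$\tilde{G}$ lies inside the simply connected radical~$R_{\tilde{G}}$; connected normal subgroups of a simply connected Lie group are closed and simply connected, so $\exp\mathfrak{a}\cong\RR$ is indeed a closed central line. Quotienting by a lattice $\Gamma\cong\ZZ$ inside it produces a compact (hence closed) central circle in~$R_G$, which then sits in some maximal torus~$T_G$, forcing $\mathfrak{a}\subseteq\lie{T_G}$ as you claim. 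The semisimple part is, as you say, bookkeeping once one knows that the Levi factor of a simply connected group is simply connected and invokes Mal{\cprime}tsev's theorem~\ref{maltsev} part~(b).
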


\begin{exas}\label{simpleLieInfinitePi}
  Among the connected Lie groups with simple Lie algebra, the most
  obvious non-linear examples are those with infinite center. These
  are precisely those connected Lie groups with simple Lie algebra
  where the maximal compact subgroups have centralizers of positive
  dimension.  Cf.~\cite[Ch.\,VIII, \S\,6; Ch.\,X, \S\,6]{MR514561}.
\end{exas}

\begin{theo}[Lee and Wu \cite{MR858294}]
  Assume that $G$ is a connected Lie group, and that $G$ is
  linear. Then the holomorph $\Aut{G}\ltimes G$ is linear if, and only
  if, one of the following holds:
  \begin{enumerate}
  \item The nilradical of $G$ is simply connected.
  \item The group $G$ is perfect (i.e., coincides with its commutator
    subgroup).
  \item The quotient $G/G'$ is isomorphic to $\RR/\ZZ$. 
\qed
  \end{enumerate}
\end{theo}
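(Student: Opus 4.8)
The plan is to reduce the linearity of the holomorph $\mathrm{Hol}(G):=\Aut{G}\ltimes G$ to Malcev's criteria~\ref{maltsev} and to the toral obstruction isolated in~\ref{CpCommSolv} and~\ref{CpCommSolvCor}. First I would dispose of the semisimple part. Since $G$ is linear, a maximal semisimple subgroup $S\le G$ is linear by~\ref{maltsev}; as all derivations of a semisimple Lie algebra are inner, the semisimple part of $\lie{\Aut{G}}=\mathrm{Der}(\lie{G})$ agrees with that of $\lie{G}$, so a maximal semisimple subgroup of $\mathrm{Hol}(G)$ is a quotient of $S$ by a discrete central subgroup and hence linear by~\ref{maltsev}. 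By~\ref{maltsev} again, linearity of $\mathrm{Hol}(G)$ is therefore equivalent to linearity of its solvable radical $P:=\mathrm{Rad}(\mathrm{Hol}(G))$, whose Lie algebra is the radical of $\mathrm{Der}(\lie{G})\ltimes\lie{G}$.

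Next I would apply the solvable criterion~\ref{maltsev} to $P$: a connected solvable Lie group is linear precisely if it is the semidirect product of a maximal compact subgroup and a simply connected closed normal subgroup. By~\ref{CpCommSolv} and~\ref{CpCommSolvCor} this is equivalent to the statement that $\closure{P'}$ contains no nontrivial compact subgroup (a maximal torus of $P$ either splits off or leaves a nontrivial compact piece inside $\closure{P'}$). Thus the whole problem becomes: \emph{decide when $\closure{(\mathrm{Hol}(G))'}$ contains a nontrivial compact subgroup}, and if it does, \ref{CpCommSolvCor} places that subgroup in $\KO{\mathrm{Hol}(G)}$, so that $\mathrm{Hol}(G)$ fails to be linear.

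The heart of the argument is then to locate the compact directions of $\closure{P'}$ in terms of the global structure of~$G$. Two compact tori are relevant: the maximal torus $T_N$ of the nilradical $N$ of $G$ (which is nontrivial exactly when $N$ is not simply connected) and the maximal compact subgroup $T_A$ of the abelianization $G/G'$. I would show that the action of $\mathrm{Der}(\lie{G})$ on $\lie{G}$ inside $\mathrm{Hol}(G)$ supplies complements that let these tori split off in exactly three situations, matching the three cases: if $N$ is simply connected then $T_N=1$ and the remaining compact part is central-and-complemented (case~(a)); if $G$ is perfect then $G/G'=1$ so $T_A=1$ and the nilradical torus is moved by outer derivations into a complemented position (case~(b)); and if $G/G'\cong\RR/\ZZ$ then the single circle in the abelianization can be unwound against a one-parameter group coming from $\Aut{G}$, again leaving no compact subgroup in $\closure{P'}$ (case~(c)). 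For the converse I would argue that when none of (a)--(c) holds --- so $T_N\neq1$ while $T_A$ is neither trivial nor a single circle --- the entanglement between $T_N$ and $T_A$ cannot be resolved, producing a genuine nontrivial compact subgroup of $\closure{P'}$ and hence, via~\ref{CpCommSolvCor}, a nontrivial $\KO{\mathrm{Hol}(G)}$.

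I expect the main obstacle to be precisely this converse and the exact bookkeeping in the three positive cases, because linearity of a connected Lie group is not determined by its Lie algebra alone: one must track which compact tori actually occur globally and how the full automorphism group $\Aut{G}$ (not merely its identity component) acts on the central tori of~$G$. Pinning down why a single circle in $G/G'$ is the borderline case --- the only abelianization for which the holomorph still splits --- is the delicate point, and is where a careful analysis of the $\Aut{G}$-action on $T_N\times T_A$ and its relation to $\closure{P'}$ is unavoidable.
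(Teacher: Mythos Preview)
The paper does not prove this theorem. It is quoted in the appendix as a result of Lee and Wu~\cite{MR858294} and marked with a \qed; no argument is given. So there is no ``paper's proof'' to compare your proposal against.

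That said, your sketch has a structural gap that would have to be repaired before anything else. You invoke Mal{\cprime}tsev's criterion~\ref{maltsev} for $\mathrm{Hol}(G)=\Aut{G}\ltimes G$, but~\ref{maltsev} is stated for \emph{connected} Lie groups, and $\Aut{G}$ need not be connected (already for $G=\TT^n$ with $n\ge2$ it has infinitely many components). Thus $\mathrm{Hol}(G)$ is in general only a Lie group with possibly many components, and the reduction ``linearity of $\mathrm{Hol}(G)$ $\Leftrightarrow$ linearity of its solvable radical'' is not available as written. You would at least need a finite-index argument along the lines of~\ref{finiteIndex} to pass to the identity component, together with a verification that $\Aut{G}/\Aut{G}_0$ is finite in the situations you care about --- and that is already part of what makes the Lee--Wu analysis nontrivial.

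A second, smaller point: the equivalence you assert between linearity of a connected solvable Lie group $P$ and ``$\closure{P'}$ contains no nontrivial compact subgroup'' is true, but it is not literally what~\ref{maltsev}\ref{maltsev3} or~\ref{CpCommSolvCor} say; one direction is~\ref{CpCommSolvCor}, the other needs an extra step showing that absence of compact subgroups in $\closure{P'}$ forces the Mal{\cprime}tsev splitting. Finally, your case analysis for (a)--(c) and the converse is only an outline; as you yourself note, the delicate point is exactly why $G/G'\cong\RR/\ZZ$ is the unique borderline abelianization, and that requires a genuine computation with the $\Aut{G}$-action on the central tori rather than a qualitative argument.
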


\goodbreak
\begin{theo}[Burnside~\cite{JFM36.0199.02}, {cf.~\cite[Ch.\,2, 2.1,
  Cor.\,B and~C, pp.\,138\,f]{MR1039816}}]\label{Burnside}\quad
  \begin{enumerate}
  \item\label{BurnsideBounded} If\/ $G\le\GL[n]{\CC}$ has finite
    exponent (i.e. if there exists $m\ge1$ such that
    $\smallset{g^m}{g\in G}$ is trivial) then $G$ is a finite group.
  \item\label{BurnsideClasses} If $H\le\GL[n]{\CC}$ contains only
    finitely many conjugacy classes then $H$ is finite.  
\qed
  \end{enumerate}
\end{theo}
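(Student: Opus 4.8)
The plan is to prove the two assertions in order, deriving~\ref{BurnsideClasses} from~\ref{BurnsideBounded}.

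For~\ref{BurnsideBounded}, first note that every $g\in G$ satisfies $g^m=1$, so its minimal polynomial divides the separable polynomial $X^m-1$; hence $g$ is diagonalizable and its eigenvalues are $m$-th roots of unity. Consequently $\mathrm{tr}(g)$ is a sum of $n$ such roots of unity and ranges over a \emph{finite} set $T$ as $g$ runs through~$G$. I would then exploit the trace form $(x,y)\mapsto\mathrm{tr}(xy)$: setting $A:=\mathrm{span}_{\CC}(G)\subseteq M_n(\CC)$ and choosing $g_1,\dots,g_d\in G$ with $d=\dim A\le n^2$ that form a basis of~$A$, the map $g\mapsto(\mathrm{tr}(gg_j))_{j=1}^d$ takes values in the finite set $T^d$ (each $gg_j$ lies in~$G$). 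It therefore suffices to show this map is injective, i.e. that the functionals $a\mapsto\mathrm{tr}(g_ja)$ separate the points of~$A$; since the $g_j$ span~$A$, this is exactly non-degeneracy of the trace form on~$A$.

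To secure non-degeneracy I would pass to the semisimplification: let $V_1,\dots,V_r$ be the composition factors of the $G$-module $\CC^n$ and let $\bar\rho\colon G\to\prod_i\GL{(V_i)}$ be the resulting representation. Because a unipotent element of finite order in characteristic~$0$ is trivial, $\bar\rho$ is faithful. On each $V_i$ the group acts irreducibly, so by Burnside's theorem on irreducible algebras (Jacobson density, over the algebraically closed field~$\CC$) its span is all of $\mathrm{End}(V_i)$; hence $\bar A:=\mathrm{span}_{\CC}(\bar\rho(G))$ has zero Jacobson radical (a nilpotent ideal would act trivially on each simple~$V_i$, hence trivially on the faithful module $\bigoplus_i V_i$), so $\bar A$ is semisimple and its trace form is non-degenerate. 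Repeating the separation argument with $\bar\rho(G)$, whose elements again have eigenvalues among the $m$-th roots of unity and hence finitely many traces, bounds $|\bar\rho(G)|$, and faithfulness of $\bar\rho$ gives $|G|<\infty$.

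For~\ref{BurnsideClasses} the strategy is to show that $H$ has finite exponent and then invoke~\ref{BurnsideBounded}. Since conjugate matrices share a characteristic polynomial, finitely many conjugacy classes force finitely many characteristic polynomials, hence a finite set $E$ of eigenvalues occurring throughout~$H$. If some $\lambda\in E$ were not a root of unity, the powers of a corresponding element would contribute the pairwise distinct eigenvalues $\lambda^k$, all lying in the finite set~$E$ --- impossible; so $E\subseteq\mu_N$ for some~$N$. It remains to rule out elements of infinite order. By the Jordan decomposition an infinite-order element has finite-order semisimple part (its eigenvalues lie in $\mu_N$), so a suitable power of it is a nontrivial unipotent element of~$H$; thus it suffices to exclude a nontrivial unipotent $u=\exp(\nu)\in H$.

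This last step is the main obstacle, and it is precisely where linearity must enter: abstract infinite groups with finitely many conjugacy classes do exist, so any valid argument has to use that $H\le\GL[n]\CC$. The plan is as follows. As the powers of $u$ lie in finitely many $H$-classes, some $u^a$ and $u^{a+d}$ with $d\ge1$ are $H$-conjugate, say $gu^ag^{-1}=u^{a+d}$ with $g\in H$. Applying $\log$ and using injectivity of $\exp$ on nilpotents yields $\Ad(g)\nu=c\,\nu$ with $c=(a+d)/a>1$, so $c$ is an eigenvalue of $\Ad(g)$. But the eigenvalues of $\Ad(g)$ are the ratios $\mu_i/\mu_j$ of the eigenvalues of~$g$, and by the previous paragraph these lie in $\mu_N$ and hence have modulus~$1$, contradicting $c>1$. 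Therefore $H$ contains no nontrivial unipotent, every element is diagonalizable with eigenvalues in $\mu_N$, so the exponent of $H$ divides~$N$, and~\ref{BurnsideBounded} completes the proof.
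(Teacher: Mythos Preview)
The paper does not supply its own proof of this theorem; it is quoted in the appendix with a \qed and references to Burnside's original paper and Zalesski{\u\i}'s survey. So there is no in-paper argument to compare against.

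Your argument is correct and follows the classical route. For~\ref{BurnsideBounded} you give the standard trace-counting proof: finitely many possible traces, combined with non-degeneracy of the trace form on the enveloping algebra after semisimplification. Your justification of the passage to composition factors --- the kernel of $\bar\rho$ consists of unipotents and therefore meets a finite-exponent group in characteristic~$0$ trivially --- is exactly what is needed to secure non-degeneracy.

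For~\ref{BurnsideClasses} your strategy is also the standard one, and the decisive step (ruling out a nontrivial unipotent by producing an $\Ad(g)$-eigenvalue $c>1$, then confronting it with the fact that the eigenvalues of $\Ad(g)$ are ratios of eigenvalues of~$g$, all of modulus~$1$) is correct. Two small points worth making explicit: you need $a\ge1$ in the pigeonhole argument so that $c=(a+d)/a$ makes sense and exceeds~$1$; this is automatic since $u,u^2,\dots$ are pairwise distinct. And $g$ itself need not be diagonalizable, but the eigenvalues of $\Ad(g)$ equal those of $\Ad(g_s)$ (Jordan decomposition of $\Ad(g)$), so the modulus bound still applies.
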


\begin{theo}[Schur~\cite{42.0155.01}]\label{schurTorsion}
  If\/ $G\le\GL[n]{\CC}$ is a torsion group
  (i.e., if every element of\/~$G$ has finite order) then every
  finitely generated subgroup of\/~$G$ is finite.
\qed
\end{theo}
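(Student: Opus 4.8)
The plan is to prove Schur's theorem directly, combining a bound on the orders of the eigenvalues with a trace-pairing argument that converts ``finitely many trace values'' into ``finitely many group elements''. So let $H=\langle g_1,\dots,g_k\rangle$ be a finitely generated torsion subgroup of $\GL[n]{\CC}$. First I would pass to the finitely generated field $K$ generated over $\QQ$ by the entries of the $g_i$ and of the $g_i^{-1}$; then every element of $H$ has all its matrix entries in $K$. Since each $h\in H$ has finite order, it is diagonalizable and all its eigenvalues are roots of unity.

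The first substantial step is to show that only finitely many roots of unity can occur as eigenvalues, so that the trace set $T:=\{\operatorname{tr}(h):h\in H\}$ is finite. For this I would use that the relative algebraic closure $F:=\overline{\QQ}\cap K$ is a number field: a finitely generated field of transcendence degree $r$ over $\QQ$ is a finite extension of $\QQ(t_1,\dots,t_r)$, so $[F:\QQ]\le[K:\QQ(t_1,\dots,t_r)]<\infty$. The characteristic polynomial of $h$ has coefficients in $K$ (the entries lie in $K$) which are elementary symmetric functions of roots of unity, hence algebraic integers, hence lie in $F$. Therefore each eigenvalue $\zeta$ of $h$ satisfies $[\QQ(\zeta):\QQ]\le[F(\zeta):F]\,[F:\QQ]\le n\,[F:\QQ]=:N$. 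If $\zeta$ has order $m$ then $\phi(m)=[\QQ(\zeta):\QQ]\le N$, and $\{m:\phi(m)\le N\}$ is finite; thus only finitely many roots of unity occur, and $T$ is finite because each trace is a sum of $n$ of them.

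The crux is to deduce finiteness of $H$ from finiteness of $T$. Let $A:=\operatorname{span}_{\CC}H\subseteq M_n(\CC)$ be the linear span of $H$; since $H$ is multiplicatively closed, $A$ is a subalgebra, of dimension $d\le n^2$, and I would choose a $\CC$-basis $h_1,\dots,h_d$ of $A$ consisting of elements of $H$. Consider the map $\Phi\colon H\to\CC^d$, $h\mapsto(\operatorname{tr}(hh_1),\dots,\operatorname{tr}(hh_d))$. Each $hh_i$ again lies in $H$, so every coordinate of $\Phi(h)$ lies in the finite set $T$; hence $\Phi$ has finite image. It remains to see that $\Phi$ is injective. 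If $\Phi(h)=\Phi(h')$ then $\operatorname{tr}\bigl((h-h')h_i\bigr)=0$ for all $i$, and by linearity $\operatorname{tr}\bigl((h-h')a\bigr)=0$ for every $a\in A$. Taking $a=(h')^{-1}\in H\subseteq A$ gives $\operatorname{tr}\bigl(h(h')^{-1}\bigr)=\operatorname{tr}(1)=n$. But $g:=h(h')^{-1}\in H$ is torsion, so its $n$ eigenvalues $\lambda_j$ are roots of unity; from $\sum\lambda_j=n$ and the triangle inequality $|\sum\lambda_j|\le\sum|\lambda_j|=n$ one gets equality, which forces every $\lambda_j=1$, whence $g$ is diagonalizable with all eigenvalues $1$, i.e. $g=1$ and $h=h'$. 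Therefore $|H|\le|T|^{d}<\infty$.

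I expect the main obstacle to be organizing the eigenvalue bound cleanly, and in particular recognizing that it is precisely the torsion hypothesis---via the implication ``$\operatorname{tr}(g)=n$ forces $g=1$''---that supplies the injectivity needed in the final counting step. Without it, finitely many trace values would not suffice, as infinite unipotent groups (all of whose elements have trace $n$) show. The field-theoretic inputs (finite transcendence degree, finiteness of $\{m:\phi(m)\le N\}$) and the diagonalizability of finite-order matrices in characteristic $0$ are standard, and I would merely cite them.
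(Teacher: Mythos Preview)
Your proof is correct and is essentially the classical Burnside--Schur argument. The paper itself gives no proof of this statement: it is quoted as a known result with a reference to Schur's 1911 paper and closed immediately with \verb|\qed|. So there is nothing to compare against; you have supplied a complete and standard proof where the paper merely cites one.

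One small cosmetic point: in the eigenvalue-bound step you assert $[\QQ(\zeta):\QQ]\le[F(\zeta):F]\,[F:\QQ]$. This is true, but the cleaner chain is $[\QQ(\zeta):\QQ]\le[F(\zeta):\QQ]=[F(\zeta):F]\,[F:\QQ]\le n\,[F:\QQ]$, and the justification that $[F:\QQ]<\infty$ (via linear disjointness of $F$ and $\QQ(t_1,\dots,t_r)$ inside $K$) is exactly as you indicate. The injectivity step via $\operatorname{tr}(g)=n\Rightarrow g=1$ for torsion $g$ is the heart of the matter and you have it right; your remark about unipotent groups showing why torsion is essential is apt.
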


\begin{exas}\label{ex:Burnside}
  Using Burnside's results we provide examples of (discrete) simple
  non-abelian groups in $\KO{\Lie}$ or even in~$\KO{\SepLie}$ ---
  separability just means countability here.
  \begin{enumerate}
  \item\label{bounded} For each infinite field $F$ of positive
    characteristic~$p$ the simple group $\PSL[2]{F}$ does not admit
    any non-trivial ordinary representation because it contains an
    infinite group of exponent~$p$,
    cf.~\ref{Burnside}\ref{BurnsideBounded}. Thus $\KO{\PSL[2]{F}} =
    \PSL[2]{F}$.
  \item\label{HNN} There exists a countably infinite group $H$ such
    that every non-trivial element of~$H$ has infinite order, and all
    these elements form a single conjugacy class,
    see~\cite{MR0032641}. Clearly this group $H$ is simple, and
    $\KO{H}=H$ follows from~\ref{Burnside}\ref{BurnsideClasses}.
  \item If~$p$ is a sufficiently large prime then there exists a
    countably infinite group $\OlM{p}$ such that every proper subgroup
    of~$\OlM{p}$ has order~$p$ and any such subgroup contains a set of
    representatives for the conjugacy classes,
    see~\cite[\S\,19]{MR1191619}. We may conclude $\KO{\OlM{p}}=\OlM{p}$ from
    any one of Burnside's results as stated in~\ref{Burnside}, and
    also from~\ref{schurTorsion}.
  \end{enumerate}
\end{exas}

\bigbreak
\goodbreak

\begin{thebibliography}{10}
\providecommand{\url}[1]{\href{#1}{#1}}
\providecommand{\urlprefix}{}
  \providecommand{\doi}[1]{\href{http://dx.doi.org/#1}{\tt doi:#1}}
\providecommand{\MR}[1]{\relax\ifhmode\unskip\space\fi \MRnumberextract#1 \,}
\def\MRnumberextract#1 #2\,{\MRhref{#1}{#2}}%
\providecommand{\MRhref}[2]{%
  \href{http://www.ams.org/mathscinet-getitem?mr=#1}{MR\,#1 #2}%
}
\providecommand{\ZBL}[1]{\relax\ifhmode\unskip\space\fi \ZBLhref{#1}}
\providecommand{\ZBLhref}[1]{%
  \href{http://www.zentralblatt-math.org/NEW/zmath/en/search/?q=an:#1&format=c%
omplete}{Zbl #1}
}
\providecommand{\JfM}[1]{\relax\ifhmode\unskip\space\fi \JfMhref{#1}}
\providecommand{\JfMhref}[1]{%
  \href{http://www.zentralblatt-math.org/NEW/zmath/en/search/?q=an:#1}{JfM #1}
}
\providecommand{\href}[2]{#2}
\providecommand{\bbldiplomarbeit}{Diplomarbeit}
 %
\def\bbland{and}                \def\bbletal{et~al.}
\def\bbleditors{editors}        \def\bbleds{eds.}
\def\bbleditor{editor}          \def\bbled{ed.}
\def\bbledby{edited by}
\def\bbledition{edition}        \def\bbledn{edn.}
\def\bblvolume{volume}          \def\bblvol{vol.}
\def\bblof{of}
\def\bblnumber{number}          \def\bblno{no.}
\def\bblin{in}
\def\bblpages{pages}            \def\bblpp{pp.}
\def\bblpage{page}              \def\bblp{p.}
\def\bbleidpp{pages}
\def\bblchapter{chapter}        \def\bblchap{chap.}
\def\bbltechreport{Technical Report}
\def\bbltechrep{Tech. Rep.}
\def\bblmthesis{Master's thesis}
\def\bblphdthesis{Ph.D. thesis}
\def\bblfirst{First}            \def\bblfirsto{1st}
\def\bblsecond{Second}          \def\bblsecondo{2nd}
\def\bblthird{Third}            \def\bblthirdo{3rd}
\def\bblfourth{Fourth}          \def\bblfourtho{4th}
\def\bblfifth{Fifth}            \def\bblfiftho{5th}
\def\bblst{st}  \def\bblnd{nd}  \def\bblrd{rd}
\def\bblth{th}
\def\bbljan{January}  \def\bblfeb{February}  \def\bblmar{March}
\def\bblapr{April}    \def\bblmay{May}       \def\bbljun{June}
\def\bbljul{July}     \def\bblaug{August}    \def\bblsep{September}
\def\bbloct{October}  \def\bblnov{November}  \def\bbldec{December}
 

\newcommand{\Capitalize}[1]{\uppercase{#1}}
\newcommand{\capitalize}[1]{\expandafter\Capitalize#1}

\bibitem{MR0027753}
I.~D. Ado, \emph{The representation of {L}ie algebras by matrices}, Uspehi
  Matem. Nauk (N.S.) \textbf{2} (1947), \bblno{} 6(22), 159--173, ISSN
  0042-1316. \MR{0027753 (10,350c)}

\bibitem{MR0030946}
I.~D. Ado, \emph{The representation of {L}ie algebras by matrices}, Amer. Math.
  Soc. Translation \textbf{2} (1949), 21, ISSN 0065-9290. \MR{0030946 (11,77c)}

\bibitem{MR2413959}
S.~Ardanza-Trevijano, M.~J. Chasco, \bbland{} X.~Dom{\'{\i}}nguez, \emph{The
  role of real characters in the {P}ontryagin duality of topological abelian
  groups}, J. Lie Theory \textbf{18} (2008), \bblno{}~1, 193--203, ISSN
  0949-5932,
  \urlprefix\url{http://www.heldermann-verlag.de/jlt/jlt18/chascola2e.pdf}.
  \MR{2413959 (2009b:22003)} \ZBL{1151.22003}

\bibitem{RealLIE}
R.~B{\"o}di \bbland{} M.~Joswig, \emph{{R}eal{LIE}. {A} software package for
  real representations of quasi-simple {L}ie groups},
  \url{http://www.mathematik.tu-darmstadt.de/~joswig/RealLie/}.

\bibitem{MR1270178}
R.~B{\"o}di \bbland{} M.~Joswig, \emph{Tables for an effective enumeration of
  real representations of quasi-simple {L}ie groups}, Sem. Sophus Lie
  \textbf{3} (1993), \bblno{}~2, 239--253, ISSN 0940-2268,
  \urlprefix\url{http://www.heldermann-verlag.de/jlt/jlt03/BOEDIPL.PDF}.
  \MR{1270178 (95f:22003)} \ZBL{0796.22008}

\bibitem{MR979294}
N.~Bourbaki, \emph{General topology. {C}hapters 1--4}, Elements of Mathematics
  (Berlin), Springer-Verlag, Berlin, 1989, ISBN 3-540-19374-X. \MR{979294
  (90a:54001a)} \ZBL{0894.54001}

\bibitem{JFM36.0199.02}
W.~Burnside, \emph{On criteria for the finiteness of the order of a group of
  linear substitutions}, Proc. London Math. Soc. (2) \textbf{3} (1905),
  435--440, \doi{10.1112/plms/s2-3.1.435}. \JfM{36.0199.02}

\bibitem{MR1050762}
M.~J. Collins, \emph{Representations and characters of finite groups},
  Cambridge Studies in Advanced Mathematics ~22, Cambridge University Press,
  Cambridge, 1990, ISBN 0-521-23440-9. \MR{1050762 (91f:20001)}
  \ZBL{0703.20001}

\bibitem{MR0214697}
J.~E. Diem \bbland{} F.~B. Wright, \emph{Real characters and the radical of an
  abelian group}, Trans. Amer. Math. Soc. \textbf{129} (1967), 517--529, ISSN
  0002-9947, \urlprefix\url{http://www.jstor.org/stable/1994605}. \MR{0214697
  (35 \#5546)} \ZBL{0166.29602}

\bibitem{MR606555}
J.~Dieudonn{\'e}, \emph{On the automorphisms of the classical groups}, Memoirs
  of the American Mathematical Society ~2, American Mathematical Society,
  Providence, R.I., 1980, ISBN 0-8218-1202-5. \MR{606555 (82c:20079)}
  \ZBL{0042.25603}

\bibitem{MR0006543}
P.~R. Halmos \bbland{} H.~Samelson, \emph{On monothetic groups}, Proc. Nat.
  Acad. Sci. U. S. A. \textbf{28} (1942), 254--258, ISSN 0027-8424,
  \urlprefix\url{http://www.pnas.org/content/28/6/254.full.pdf}. \MR{0006543
  (4,2f)} \ZBL{0063.01893}

\bibitem{MR0028829}
Harish-Chandra, \emph{Faithful representations of {L}ie algebras}, Ann. of Math
  (2) \textbf{50} (1949), 68--76, ISSN 0003-486X, \doi{10.2307/1969352}.
  \MR{0028829 (10,504a)} \ZBL{0032.25201}

\bibitem{MR0034396}
Harish-Chandra, \emph{On faithful representations of {L}ie groups}, Proc. Amer.
  Math. Soc. \textbf{1} (1950), 205--210, ISSN 0002-9939,
  \urlprefix\url{http://www.jstor.org/stable/2031923}. \MR{0034396 (11,579h)}
  \ZBL{0039.02004}

\bibitem{MR514561}
S.~Helgason, \emph{Differential geometry, {L}ie groups, and symmetric spaces},
  Pure and Applied Mathematics ~80, Academic Press Inc., New York, 1978, ISBN
  0-12-338460-5. \MR{514561 (80k:53081)} \ZBL{0993.53002}

\bibitem{MR551496}
E.~Hewitt \bbland{} K.~A. Ross, \emph{Abstract harmonic analysis. {V}ol. {I}},
  Grundlehren der {M}athematischen {W}issenschaften  115, Springer-Verlag,
  Berlin, \bblsecondo{} \bbledn{}, 1979, ISBN 3-540-09434-2. \MR{551496
  (81k:43001)} \ZBL{0837.43002}

\bibitem{MR0032641}
G.~Higman, B.~H. Neumann, \bbland{} H.~Neumann, \emph{Embedding theorems for
  groups}, J. London Math. Soc. \textbf{24} (1949), 247--254, ISSN 0024-6107,
  \doi{10.1112/jlms/s1-24.4.247}. \MR{0032641 (11,322d)} \ZBL{0034.30101}

\bibitem{MR0409722}
K.~H. Hofmann, \emph{Analytic groups without analysis}, \bblin{} \emph{Symposia
  {M}athematica, {V}ol. {XVI} ({C}onvegno sui {G}ruppi {T}opologici e {G}ruppi
  di {L}ie, {INDAM}, {R}ome, 1974)}, \bblpp{} 357--374, Academic Press, London,
  1975. \MR{0409722 (53 \#13474)} \ZBL{0319.22021}

\bibitem{MR849093}
K.~H. Hofmann \bbland{} S.~A. Morris, \emph{Free compact groups. {I}. {F}ree
  compact abelian groups}, Topology Appl. \textbf{23} (1986), \bblno{}~1,
  41--64, ISSN 0166-8641, \doi{10.1016/0166-8641(86)90016-7}. \MR{849093
  (88a:22011)} \ZBL{0589.22003}

\bibitem{MR927286}
K.~H. Hofmann \bbland{} S.~A. Morris, \emph{Correction: ``{F}ree compact
  groups. {I}. {F}ree compact abelian groups'' [{T}opology {A}ppl.\ {\bf 23}
  (1986), no.\ 1, 41--64; {MR}0849093 (88a:22011)]}, Topology Appl. \textbf{28}
  (1988), \bblno{}~1, 101--102, ISSN 0166-8641,
  \doi{10.1016/0166-8641(88)90040-5}. \MR{927286 (89a:22008)}

\bibitem{MR1082789}
K.~H. Hofmann \bbland{} S.~A. Morris, \emph{Weight and {$c$}}, J. Pure Appl.
  Algebra \textbf{68} (1990), \bblno{} 1-2, 181--194, ISSN 0022-4049,
  \doi{10.1016/0022-4049(90)90142-5}. \MR{1082789 (92g:22011)} \ZBL{0728.22006}

\bibitem{MR2103546}
K.~H. Hofmann \bbland{} S.~A. Morris, \emph{The structure of abelian pro-{L}ie
  groups}, Math. Z. \textbf{248} (2004), \bblno{}~4, 867--891, ISSN 0025-5874,
  \doi{10.1007/s00209-004-0685-5}. \MR{2103546 (2005i:22007)} \ZBL{1058.22003}

\bibitem{MR2261490}
K.~H. Hofmann \bbland{} S.~A. Morris, \emph{The structure of compact groups},
  de {G}ruyter Studies in Mathematics ~25, Walter de Gruyter \& Co., Berlin,
  augmented \bbledn{}, 2006, ISBN 978-3-11-019006-9; 3-11-019006-0. \MR{2261490
  (2007d:22002)} \ZBL{1139.22001}

\bibitem{MR2337107}
K.~H. Hofmann \bbland{} S.~A. Morris, \emph{The {L}ie theory of connected
  pro-{L}ie groups}, {EMS} Tracts in Mathematics ~2, European Mathematical
  Society (EMS), Z\"urich, 2007, ISBN 978-3-03719-032-6. \MR{2337107
  (2008h:22001)} \ZBL{1153.22006}

\bibitem{MR1391958}
K.~H. Hofmann, S.~A. Morris, \bbland{} M.~Stroppel, \emph{Locally compact
  groups, residual {L}ie groups, and varieties generated by {L}ie groups},
  Topology Appl. \textbf{71} (1996), \bblno{}~1, 63--91, ISSN 0166-8641,
  \doi{10.1016/0166-8641(95)00068-2}. \MR{1391958 (97k:22006)} \ZBL{0858.22005}

\bibitem{MR2475971}
K.~H. Hofmann \bbland{} K.-H. Neeb, \emph{Pro-{L}ie groups which are
  infinite-dimensional {L}ie groups}, Math. Proc. Cambridge Philos. Soc.
  \textbf{146} (2009), \bblno{}~2, 351--378, ISSN 0305-0041,
  \doi{10.1017/S030500410800128X}. \MR{2475971 (2009k:22028)} \ZBL{1165.22017}

\bibitem{MR2542208}
K.~H. Hofmann \bbland{} K.-H. Neeb, \emph{The compact generation of closed
  subgroups of locally compact groups}, J. Group Theory \textbf{12} (2009),
  \bblno{}~4, 555--559, ISSN 1433-5883, \doi{10.1515/JGT.2008.096}.
  \MR{2542208} \ZBL{1179.22003}

\bibitem{MR0032613}
K.~Iwasawa, \emph{On the representation of {L}ie algebras}, Jap. J. Math.
  \textbf{19} (1948), 405--426. \MR{0032613 (11,317d)}

\bibitem{MR0029911}
K.~Iwasawa, \emph{On some types of topological groups}, Ann. of Math. (2)
  \textbf{50} (1949), 507--558, ISSN 0003-486X, \doi{10.2307/1969548}.
  \MR{0029911 (10,679a)} \ZBL{0034.01803}

\bibitem{MR0276398}
I.~Kaplansky, \emph{Lie algebras and locally compact groups}, The University of
  Chicago Press, Chicago, Ill.-London, 1971. \MR{0276398 (43 \#2145)}
  \ZBL{0223.17001}

\bibitem{MR783636}
S.~Lang, \emph{Algebra}, Addison-Wesley Publishing Company Advanced Book
  Program, Reading, MA, \bblsecondo{} \bbledn{}, 1984, ISBN 0-201-05487-6.
  \MR{783636 (86j:00003)} \ZBL{0848.13001}

\bibitem{MR858294}
D.~H. Lee \bbland{} T.~S. Wu, \emph{On faithful representations of the
  holomorph of {L}ie groups}, Math. Ann. \textbf{275} (1986), \bblno{}~3,
  521--527, ISSN 0025-5831, \doi{10.1007/BF01458621}. \MR{858294 (87k:22012)}
  \ZBL{0603.22002}

\bibitem{MR0042420}
G.~W. Mackey, \emph{On induced representations of groups}, Amer. J. Math.
  \textbf{73} (1951), 576--592, ISSN 0002-9327, \doi{10.2307/2372309}.
  \MR{0042420 (13,106d)} \ZBL{0045.30305}

\bibitem{MR0013164}
A.~I. Mal{\cprime}tsev, \emph{On linear {L}ie groups}, C. R. (Doklady) Acad.
  Sci. URSS (N. S.) \textbf{40} (1943), 87--89. \MR{0013164 (7,115b)}
  \ZBL{0061.04704}

\bibitem{MR0073104}
D.~Montgomery \bbland{} L.~Zippin, \emph{Topological transformation groups},
  Interscience Publishers, New York-London, 1955. \MR{0073104 (17,383b)}
  \ZBL{0068.01904}

\bibitem{MR0327979}
M.~Moskowitz, \emph{A remark on faithful representations}, Atti Accad. Naz.
  Lincei Rend. Cl. Sci. Fis. Mat. Natur. (8) \textbf{52} (1972), 829--831
  (1973). \MR{0327979 (48 \#6321)} \ZBL{0294.22011}

\bibitem{MR1396990}
N.~Nahlus, \emph{Note on faithful representations and a local property of {L}ie
  groups}, Proc. Amer. Math. Soc. \textbf{125} (1997), \bblno{}~9, 2767--2769,
  ISSN 0002-9939, \doi{10.1090/S0002-9939-97-03893-8}. \MR{1396990 (97j:22018)}
  \ZBL{0882.22009}

\bibitem{MR1191619}
A.~Y. Ol{\cprime}shanski{\u\i}, \emph{Geometry of defining relations in
  groups}, Mathematics and its Applications (Soviet Series) ~70, Kluwer
  Academic Publishers Group, Dordrecht, 1991, ISBN 0-7923-1394-1. \MR{1191619
  (93g:20071)} \ZBL{0732.20019}

\bibitem{MR1064110}
A.~L. Onishchik \bbland{} {\`E}.~B. Vinberg, \emph{Lie groups and algebraic
  groups}, Springer Series in Soviet Mathematics, Springer-Verlag, Berlin,
  1990, ISBN 3-540-50614-4. \MR{1064110 (91g:22001)} \ZBL{0722.22004}

\bibitem{MR1357169}
D.~J.~S. Robinson, \emph{A course in the theory of groups}, Graduate Texts in
  Mathematics ~80, Springer-Verlag, New York, \bblsecondo{} \bbledn{}, 1996,
  ISBN 0-387-94461-3. \MR{1357169 (96f:20001)} \ZBL{0836.20001}

\bibitem{MR644485}
W.~Roelcke \bbland{} S.~Dierolf, \emph{Uniform structures on topological groups
  and their quotients}, McGraw-Hill International Book Co., New York, 1981,
  ISBN 0-07-0543412-8. \MR{644485 (83j:22001)} \ZBL{0489.22001}

\bibitem{MR1384300}
H.~Salzmann, D.~Betten, T.~Grundh{\"o}fer, H.~H{\"a}hl, R.~L{\"o}wen, \bbland{}
  M.~Stroppel, \emph{Compact projective planes}, de {G}ruyter Expositions in
  Mathematics ~21, Walter de Gruyter \& Co., Berlin, 1995, ISBN 3-11-011480-1.
  \MR{1384300 (97b:51009)} \ZBL{0851.51003}

\bibitem{54.0149.02}
O.~Schreier \bbland{} B.~L. van~der Waerden, \emph{Die {A}utomorphismen der
  projektiven {G}ruppen}, {A}bh. {M}ath. {S}em. {U}niv. {H}amburg \textbf{6}
  (1928), 303--322, \doi{10.1007/BF02940620}. \JfM{54.0149.02}

\bibitem{42.0155.01}
I.~Schur, \emph{{\"Uber Gruppen periodischer linearer Substitutionen.}},
  Sitzungsber. Preuss. Akad. Wiss.  (1911), 619--627. \JfM{42.0155.01}

\bibitem{MR1859180}
M.~Stroppel, \emph{Locally compact groups with many automorphisms}, J. Group
  Theory \textbf{4} (2001), \bblno{}~4, 427--455, ISSN 1433-5883,
  \doi{10.1515/jgth.2001.032}. \MR{1859180 (2002g:22010)} \ZBL{0999.22006}

\bibitem{MR2226087}
M.~Stroppel, \emph{Locally compact groups}, EMS Textbooks in Mathematics,
  European Mathematical Society (EMS), Z\"urich, 2006, ISBN 3-03719-016-7,
  \doi{10.4171/016}. \MR{2226087 (2007d:22001)} \ZBL{1102.22005}

\bibitem{MR0201576}
W.~T. van Est, \emph{On {A}do's theorem}, Nederl. Akad. Wetensch. Proc. Ser. A
  69 = Indag. Math. \textbf{28} (1966), 176--191. \MR{0201576 (34 \#1458)}
  \ZBL{0156.03901}

\bibitem{MR0335656}
B.~A.~F. Wehrfritz, \emph{Infinite linear groups. {A}n account of the
  group-theoretic properties of infinite groups of matrices}, Springer-Verlag,
  New York, 1973. \MR{0335656 (49 \#436)} \ZBL{0261.20038}

\bibitem{MR0089361}
F.~B. Wright, \emph{Topological abelian groups}, Amer. J. Math. \textbf{79}
  (1957), 477--496, ISSN 0002-9327, \doi{10.2307/2372559}. \MR{0089361
  (19,662a)} \ZBL{0079.25604}

\bibitem{MR0058607}
H.~Yamabe, \emph{A generalization of a theorem of {G}leason}, Ann. of Math. (2)
  \textbf{58} (1953), \bblno{}~2, 351--365, ISSN 0003-486X,
  \doi{10.2307/1969792}. \MR{0058607 (15,398d)} \ZBL{0053.01602}

\bibitem{MR1039816}
A.~E. Zalesski{\u\i}, \emph{Linear groups}, \bblin{} \emph{Current problems in
  mathematics. {F}undamental directions, {V}ol.\ 37 ({R}ussian)}, Itogi Nauki i
  Tekhniki, \bblpp{} 114--228, 236, Akad. Nauk SSSR Vsesoyuz. Inst. Nauchn. i
  Tekhn. Inform., Moscow, 1989. \MR{1039816 (91m:20003)} \ZBL{0715.20027}

\end{thebibliography}
\def\cprime{$'$}

\begin{ackn}
  A substantial part of these notes was written while the author was a
  guest of  SFB 478 ``Geometrische Strukturen
  in der Mathematik'', M\"unster, Germany.
\end{ackn}
\begin{small}
  {\bfseries Author's address: }\\
  Markus Stroppel, Fachbereich Mathematik, Universit\"at Stuttgart,
  D-70550 Stuttgart, Germany.
\end{small}

\end{document}